\documentclass[reqno]{amsart}

\setlength{\textwidth}{\paperwidth}
\addtolength{\textwidth}{-3in}
\calclayout

\usepackage{amssymb,amsmath,array,multirow,makecell,blindtext,amsthm,enumitem,mathtools,amscd,tikz-cd,dsfont,hyperref,url,caption,float,placeins,bm}
\usepackage[nobysame, alphabetic]{amsrefs}
\usepackage[font=small, labelfont=bf]{caption}
\hypersetup{
    colorlinks=true,
    linkcolor=blue,
    citecolor=black,
    filecolor=magenta,      
    urlcolor=cyan,
}

\urlstyle{same}
\numberwithin{equation}{section}

\urlstyle{same}
\theoremstyle{plain}
\newtheorem{theorem}{Theorem}[section]
\newtheorem{corollary}[theorem]{Corollary}
\newtheorem{lemma}[theorem]{Lemma}
\newtheorem{remark}[theorem]{Remark}
\newtheorem{proposition}[theorem]{Proposition}
\newtheorem{definition}[theorem]{Definition}

\newtheorem{conjecture}[theorem]{Conjecture}
\newtheorem{example}[theorem]{Example}

\newtheorem{lthm}{Theorem} 
\newtheorem{corl}[lthm]{Corollary}

\newcommand{\Z}{\mathbf{Z}}
\newcommand{\Q}{\mathbf{Q}}
\newcommand{\R}{\mathbf{R}}
\newcommand{\C}{\mathbf{C}}
\newcommand{\F}{\mathbf{F}}
\newcommand{\Pp}{\mathfrak{p}}
\newcommand{\Oo}{\mathcal{O}}
\newcommand{\T} {\mathcal{T}}
\newcommand{\cI}{\mathcal{I}}

\newcommand{\cB}{\mathcal{B}}

\newcommand{\cL}{\mathcal{L}}
\newcommand{\Gg}{\mathcal{G}}
\newcommand{\Div}{\operatorname{Div}}
\newcommand{\Frac}{\operatorname{Frac}}
\newcommand{\Gal}{\operatorname{Gal}}
\newcommand{\SL}{\operatorname{SL}}
\newcommand{\sgn}{\operatorname{sgn}}

\newcommand{\ord}{\operatorname{ord}}
\newcommand{\cor}{\operatorname{cor}}

\newcommand{\Symb}{\operatorname{Symb}}

\newcommand{\Hom}{\operatorname{Hom}}
\newcommand{\im}{\operatorname{im}}
\newcommand{\NP}{\operatorname{NP}}
\newcommand{\GL}{\operatorname{GL}}

\newcommand{\Sym}{\operatorname{Sym}}

\newcommand{\nf}{\normalfont}

\newcommand{\e}{\varepsilon}

\newcommand{\Mod}[1]{\ \mathrm{mod}\ #1}

\title[Iwasawa Invariants of Modular Forms with $a_p=0$]{Iwasawa Invariants of Modular Forms with $\pmb{a_p=0}$}
\author{Rylan Gajek-Leonard}

\address[]{Department of Mathematics, Union College, Schenectady, NY}
\email[Rylan Gajek-Leonard]{gajekler@union.edu}

\subjclass[2020]{Primary 11R23; Secondary 11F11, 11F33}
\keywords{Iwasawa theory, modular forms, $p$-adic $L$-functions.}

\begin{document}

\begin{abstract} Fix a prime $p$ and a cuspidal newform $f$ of level coprime to $p$ with $a_p=0$. Attached to $f$ are signed $p$-adic $L$-functions $L_p^\pm(f)$ and Mazur-Tate elements $\theta_n(f)$, both of which encode arithmetic data about $f$ along the cyclotomic $\Z_p$-extension of $\Q$. We compute the Iwasawa invariants of Mazur-Tate elements in terms of the corresponding invariants of the signed $p$-adic $L$-functions. As corollaries, we determine the $p$-adic valuation of critical values of the $L$-function of $f$, and describe a relation between the Iwasawa invariants of $p$-congruent modular forms of weights  2 and $p+1$. Our results provide an asymptotic method for computing the signed Iwasawa invariants attached to newforms of any weight $k\geq 2$ with $a_p=0$.
\end{abstract}

\maketitle

\section{Introduction}

Fix a prime $p$ and a cuspidal newform $f=\sum_{n\geq 1} a_nq^n$ of weight $k\geq 2$ and level $\Gamma_1(N)$, where we assume $p\nmid N$.  Let $K$ denote the finite extension of $\Q_p$ generated by the images of $a_n$ for all $n$ under a fixed choice of embedding $\overline{\Q} \hookrightarrow\overline{\Q}_p$. Let $\Oo$ denote the valuation ring of $K$. For simplicity, we assume throughout this introduction that $p$ is odd.  

Attached to $f$ is a sequence $\theta_n(f)\in \Oo[\Gamma_n]$ of \emph{Mazur-Tate elements}, where $\Gamma_n\cong \Z/p^n\Z$ is the Galois group of the $n$th-layer of the cyclotomic $\Z_p$-extension of $\Q$. These elements were first introduced by \cite{MTT} and \cite{MT}  in order to formulate $p$-adic analogues of conjectures of Birch and Swinnerton-Dyer. 
Like $p$-adic $L$-functions, Mazur-Tate elements interpolate special values of the complex $L$-function of $f$ and have associated Iwasawa $\lambda$ and $\mu$-invariants.  Understanding the behavior of these invariants is a central goal in Iwasawa theory: various `main conjectures' (for $p$-adic $L$-functions) and `refined conjectures' (for Mazur-Tate elements) relate the Iwasawa invariants of these objects to the arithmetic structure of certain Selmer groups attached to $f$.

In this article, we study the relationship between the Iwasawa invariants of  $p$-adic $L$-functions and those attached to Mazur-Tate elements. If $f$ is ordinary at $p$ (i.e., $\ord_p(a_p)=0$) or if $f$ is non-ordinary of weight $k=2$, it is known that the Iwasawa invariants of Mazur-Tate elements recover corresponding invariants coming from a $p$-adic $L$-function.  For instance, under some mild assumptions we have for $n\gg0$ that
\begin{equation}\label{ord1}
\lambda(\theta_n(f))=
\begin{cases}
\lambda\big(L_p(f)\big) & \text{if $f$ is ordinary,}\\
\lambda\big(L_p^{\sharp/\flat}(f)\big)+q_n &\text{if $f$ is non-ordinary and $k=2$,}
\end{cases}
 \end{equation} 
where $L_p(f),L_p^{\sharp/\flat}(f)\in \Oo[[T]]\otimes K$ are the $p$-adic $L$-functions of \cite{visik76,amicevelu75} and \cite{sprung17}, respectively, and  
$$
q_n=\begin{cases}p^{n-1}-p^{n-2}+\cdots +p-1 & \quad\text{if $n$ is even,}\\
p^{n-1}-p^{n-2}+\cdots +p^2-p & \quad\text{if $n$ is odd.}
\end{cases}
$$
See \cite[Proposition 3.7]{PW} and \cite[Theorem 4.1]{PW} (together with \cite[Corollary 8.9]{sprung17}) for precise statements of this result in the ordinary and non-ordinary cases, respectively. 

When $f$ is non-ordinary of weight $k>2$, the relationship between the Iwasawa invariants of Mazur-Tate elements and  $p$-adic $L$-functions is not well-understood.
Seminal work of Pollack \cite{pollack03} associates signed $p$-adic $L$-functions $L_p^\pm(f)$ to $f$ of any weight assuming $a_p(f)=0$. (Sprung's $\sharp/\flat$ $p$-adic $L$-functions are only defined when $k=2$ and $\ord_p(a_p)>0$, but they agree with Pollack's plus/minus $p$-adic $L$-functions when $a_p=0$.) Our first theorem shows that, up to certain explicit polynomials, the signed $p$-adic $L$-functions can be viewed as lifts of the Mazur-Tate elements. This generalizes a result of Pollack
\cite[Proposition 6.18]{pollack03} to higher weight modular forms.  
In what follows, we let $\e_n$ denote the sign of $(-1)^n$.

\begin{lthm}[Theorem \ref{lifts}] \label{lifts1} Let $f\in S_k(\Gamma_1(N))$ be a newform with $a_p(f)=0$. There are units $u_{n}\in \Z_p[\e(p)]^\times$ such that for all $n\geq 0$ we have
$$
\theta_{n}(f) \equiv u_n\log_{k,n}^{-\e_n}  L_{p}^{-\e_n}(f)\Mod \omega_n, 
$$
where $\omega_n=(1+T)^{p^n}-1$ and $\log_{k,n}^{\pm}\in\Z_p[T]$ (see \S\ref{section_lifts}) are explicit products of cyclotomic polynomials.
\end{lthm}

Theorem \ref{lifts1} forms the basis of our main result, which relates the Iwasawa invariants of $\theta_n(f)$ to those of the signed $p$-adic $L$-functions, thereby generalizing \eqref{ord1} to higher weights.  To state this result, we first introduce some notation. Let $\lambda^\pm(f)$ and $\mu^\pm(f)$ denote the Iwasawa invariants of $L_p^\pm(f)$ and define 
\begin{align*}
\nu&=\bigg\lfloor\frac{k-p-2}{p^2-1}\bigg\rfloor,\qquad  \nu^- =\nu(p-1)+1,\qquad \nu^+ = p\nu^{-},\\
\iota^\pm(f)&=\begin{cases}
0 &\text{if $2\leq k\leq p+1$,}\\
\big\lfloor\frac{p(k-2)-1}{p^2-1}\big\rfloor&\text{if  $k\geq p+2$ and $k\not\equiv p+2 \Mod (p^2-1)$,}\\
\frac{p(k-2-p)}{p^2-1}&\text{if  $k\equiv p+2 \Mod (p^2-1)$ and $\lambda^\pm(f) <\nu^{\pm}$,}\\
\frac{p(k-2)-1}{p^2-1}& \text{if  $k\equiv p+2 \Mod (p^2-1)$ and $\lambda^\pm(f) \geq \nu^{\pm}$.}
\end{cases}
\end{align*}
 
\begin{lthm}[Theorems \ref{theorem_smallweights} and \ref{main}] \label{main1} Let $f\in S_k(\Gamma_1(N))$ be a newform with $a_p(f)=0$. 
For all $n\gg0$ we have
\begin{align*}
 \mu\big(\theta_n(f)\big)&=\mu^{*}(f)+\iota^{*}(f),\\
 \lambda\big(\theta_n(f)\big) &=\lambda^{*}(f)+(k-1)q_n-\iota^{*}(f)(p^n-p^{n-1}),
\end{align*}
where $*=-$ if $n$ is even and $*=+$ if $n$ is odd. 
\end{lthm}

Because Mazur-Tate elements are easily computable, Theorem \ref{main1} provides an asymptotic method for computing the signed Iwasawa invariants attached to newforms with $a_p=0$, as seen in the following example.  
 
\begin{example}\label{compiwex}\nf Let $p=3$ and consider the weight $6$ newform 
$$
f=q - 4q^2 + 16q^4 - 14q^5 +O(q^6)\in S_6(\Gamma_0(26)),
$$ 
with LMFDB label \href{https://www.lmfdb.org/ModularForm/GL2/Q/holomorphic/26/6/a/a/}{\texttt{26.6.a.a}}. Theorem \ref{main1} implies 
$$
\lambda(\theta_n(f))=\lambda(L_3^\pm(f))+5q_n-(3^n-3^{n-1}),\qquad n\gg0.
$$
The following table contains the first eight terms of $\lambda(\theta_n(f))$ and $5q_n-(3^n-3^{n-1})$. 
\begin{center}
\begin{tabular}{|c||c|c|c|c|c|c|c|c|}
\hline
 $n$ & 1 & 2 &3&4&5&6&7&8\\  \hline
 $\lambda(\theta_n(f))$ & 1 & 5 &17&47&143&425&1277&3827\\  \hline 
$5q_n-(3^n-3^{n-1})$& $-2$ & 4 &12&46&138&424&1272&3826\\ 
\hline
\end{tabular}
\end{center}
This suggests that $\lambda(L_3^+(f))=5$ and $\lambda(L_3^-(f))=1.$
\end{example}

\begin{remark} \nf \phantom{}
\begin{enumerate}
\item In Appendix \ref{tablessection}, we present tables containing over two hundred pairs of invariants $\lambda(L_p^\pm(f))$ predicted by Theorem \ref{main} at the primes $p\in\{2,3,5\}$. When $p$ is odd, our data suggests that both $\lambda(L_p^+(f))$ and $\lambda(L_p^-(f))$ increase with the weight of $f$, but at different rates depending on the sign. At weight $p+1$, the difference between the sizes of the signed $\lambda$-invariants can be explained by Corollary \ref{coro_C} below. 
\item The behavior of $\lambda(\theta_n(f))$ in the ordinary (or non-ordinary weight $k=2$) case can be made effective: one can show that if $\mu(\theta_n(f))$ vanishes for some $n$ then equation  \eqref{ord1} holds at this particular value of $n$, meaning that the $\lambda$-invariant of the corresponding plus/minus $p$-adic $L$-function can be determined exactly. When $k\leq p+1$, Theorem \ref{main1} can be made effective (see Theorem \ref{theorem_smallweights}), although determining the smallest $n$ which guarantees the behavior requires first knowing the values $\lambda^\pm(f)$. At higher weights, we do not give explicit lower bounds on $n$, however examples suggests that such bounds may also depend on the size of $\lambda^\pm(f)$ -- see Remark \ref{remark_effective}. 
\end{enumerate}
\end{remark}

Let $\F$ be the residue field of $K$ and let $\overline{\rho_f}:\Gal(\overline{\Q}/\Q)\rightarrow \GL_2(\F)$ denote the residual Galois representation attached to $f$, which we assume to be irreducible.  In the pioneering work of Pollack and Weston \cite{PW}, the authors study the Mazur-Tate elements of modular forms $f$ of weight $k>2$ and level $\Gamma_0(N)$ under the assumption that $\overline{\rho_f}$ has Serre weight two (this implies $\overline{\rho_f}\cong\overline{\rho_g}$ for some weight two eigenform $g$). By using a mod $p$ multiplicity one result due to Ribet \cite{ribet91}, Pollack and Weston compute (under constraints on either the weight or slope of $f$) the Iwasawa invariants of $\theta_n(f)$ in terms of invariants attached to a weight two  eigenform $g$ whose residual representation is isomorphic to $\overline{\rho_f}$.
In particular, when $f$ is non-ordinary of weight $p+1$ then $f$ has Serre weight two and the results of \cite{PW} apply, allowing us to write $\lambda(\theta_n(f))$ in two ways: the first in terms of $\lambda^\pm(f)$ (from Theorem \ref{main1}) and the second in terms of $\lambda^{\sharp/\flat}(g)$ (from \cite{PW}). Combining these descriptions yields the following corollary. 

\begin{corl}[Corollary \ref{coro_p1}]\label{coro_C} Let $f\in S_{p+1}(\Gamma_0(N))$ be a newform with $a_p(f)=0$. Then there exists a $p$-non-ordinary eigenform $g\in S_2(\Gamma_0(N))$ with $\overline{\rho_f}\cong \overline{\rho_g}$ such that  if  $\mu\big(L_{p,\psi}^{\sharp}(g)\big) =\mu\big(L_{p,\psi}^{\flat}(g)\big) =0$ then 
\begin{align*}
\lambda^+(f)&= \lambda^{\flat}(g)+p-1, \quad \text{and}\\
\lambda^-(f)&=\lambda^\sharp(g).
\end{align*}
\end{corl}

This type of result falls into the same category as the work of \cite{EPW} and \cite{HL19} as it asserts a relation between the Iwasawa invariants of two modular forms whose residual Galois representations are isomorphic. Numerical examples are given \S\ref{pminvsection}.

 Theorem \ref{main1} together with results of \cite{pollack03} also yields a concise description of the $p$-adic valuation of twists of critical $L$-values for $f$ in terms of the Iwasawa invariants of Mazur-Tate elements. The valuation at $s=1$ is stated below; see Corollary \ref{ordLval} for a more general statement at the other critical values $s=j+1$ with $j\in \{0,\dots, k-2\}$.
   
\begin{corl} (Corollary \ref{ordLval}) \label{p1two} Let $f\in S_k(\Gamma_1(N))$ be a newform with $a_p(f)=0$. Then for $n\gg0$ we have 
\begin{align*}
\ord_p\bigg(\frac{p^{n+1}\tau(\chi)}{\Omega_f^{\sgn(-1)^k}}L(f_{\chi^{-1}},1)\bigg)&=
\mu\big(\theta_{n}(f)\big)+\frac{\lambda\big(\theta_{n}(f)\big)}{p^n-p^{n-1}}\\
&=\mu(L_{p}^*(f))+\frac{(k-1)q_n+\lambda(L_{p}^{*}(f))}{p^n-p^{n-1}},
\end{align*}
where $\Omega^\pm_f\in \C^\times$ are cohomological periods (see \S \ref{section_coho}), $\tau(\chi)$ is a Gauss sum, and $*=-$ if $n$ is even and $*=+$ if $n$ is odd. 
\end{corl}

 In the elliptic curve case, these valuations are known to be closely related (via the Birch and Swinnerton-Dyer conjecture) to the growth of Shafarevich-Tate groups along the finite layers of the cyclotomic $\Z_p$-extension of $\Q$ (see \cite{pollack03,sprung17,PR,kurihara02}). Assuming the Bloch-Kato conjecture \cite[5.15.1]{blochkato}, we suspect that the valuations above might be analogously related to the growth of Bloch-Kato-Shafarevich-Tate groups for modular forms along the cyclotomic line.
 
 Finally, we use Theorem \ref{main1} together with computational evidence to formulate Conjecture \ref{conj_congruence}, which describes a possible relationship between  signed $\lambda$-invariants attached to pairs of cuspforms (of the same level but possibly different weights) with $a_p=0$ and isomorphic residual representations.

 \subsection{Sketch of proof of Theorem \ref{main1}}  In light of 
Theorem \ref{lifts1}, the main task is to relate the Iwasawa invariants of the lifts $\log_{k,n}^{\pm}  L_{p}^{\pm}(f)\in \Oo[[T]]\otimes K$ to those of  the Mazur-Tate elements $\theta_n(f)$. It is well-known (see \cite[Remark 4.3]{pollack04} or Lemma \ref{small_lam}) that if $F\in \Oo[[T]]\otimes K$ and $\lambda(F)<p^n$ then the Iwasawa invariants of $F$ and $\pi_n(F):= F\Mod \omega_n$ agree. Directly from the definitions of $\log_{k,n}^\pm$, we have (see Lemma \ref{omeganinv})
\begin{align*}
\lambda(\log_{k,n}^{-\e_n}  L_{p}^{-\e_n}(f))&=(k-1)q_n+\lambda^{-\e_n}(f),\\
\mu(\log_{k,n}^{-\e_n}  L_{p}^{-\e_n}(f))&=\mu^{-\e_n}(f).
\end{align*}
In particular, if $k\leq p+1$ then $(k-1)q_n+\lambda^\pm(f)<p^n$ for $n\gg0$, from which Theorem \ref{main1} at weights $k\leq p+1$ follows by combining Theorem \ref{lifts1} with the statements of the previous two sentences.
For weights $k\geq p+2$, it can happen that $(k-1)q_n+\lambda^\pm(f)\geq p^n$, in which case the Iwasawa invariants of $\log_{k,n}^{\pm}  L_{p}^{\pm}(f)$ and those of $\pi_n(\log_{k,n}^{\pm}  L_{p}^{\pm}(f))$ need not agree.

We consider this problem in isolation in \S\ref{section_finite-layer}: for fixed $n$ and $F$ with $\lambda(F)\geq p^n$, how are the Iwasawa invariants of $F$ and $\pi_n(F)$ related? We answer this question for a specific class of functions that we call \emph{$p$-large at $n$} (see Definition \ref{pldef}). Roughly speaking, these are functions $F$ with $\lambda(F)\geq p^n$ and large $p$-adic valuation in their first $\lambda(F)-1$ coefficients. We prove the following theorem, which allows one to pass from invariants `upstairs' in the full Iwasawa algebra to those `at layer $n$'.  Note the visual similarity between this theorem and the statement in Theorem \ref{main1}.

\begin{lthm}[Theorem \ref{mainpl}]\label{mainpl1} If $F\in \Oo[[T]]\otimes K$ is $p$-large at $n$ then
\begin{align*}
\mu(\pi_n(F))&=\mu(F)+I_n(F), \quad \text{and}\\
\lambda(\pi_n(F))&=\lambda(F)-I_n(F)(p^n-p^{n-1}),
\end{align*}
where
$$
I_n(F)=\bigg\lfloor\frac{\lambda(F)-p^n}{p^n-p^{n-1}}\bigg\rfloor+1.
$$
\end{lthm}
 The proof of this theorem involves a careful study of the remainders of power series upon division by $\omega_n=(1+T)^{p^n}-1$, which may be of independent interest. Our basic technique is to define, for each integer $i$, a sequence $(c_N^{(i)})_{N\geq0}$ (see Definition \ref{def_cN}) with the property that if $F=\sum_{n\geq 0}b_iT^i\in \Oo[[T]]\otimes K$ then
\begin{equation}\label{proj}
F\equiv  \sum_{i=0}^{p^n-1}\bigg(b_i+\sum_{N=0}^\infty b_{p^n+N}c_N^{(i)}\bigg)T^i\Mod \omega_n. 
\end{equation}
This allows us to use the sequence $(c_N^{(i)})$ to study the behavior of $F$ mod $\omega_n$. 

We then show (using a Newton polygon argument) that $L_p^\pm(f)\log_{k,n}^\pm$ are $p$-large at $n\gg0$ when $k\geq p+2$ (see Proposition \ref{lpl}), thereby allowing us to use Theorem \ref{mainpl1} to compute  the Iwasawa invariants of $\pi_n(\log_{k,n}^{\pm}  L_{p}^{\pm}(f))$ and complete the proof of Theorem \ref{main1}. 

 \subsection{Outlook} We remark that the $a_p=0$-hypothesis is fairly restrictive at weights $k>2$, though it is satisfied for any $p$-non-ordinary CM modular form. We need this assumption in order to define the signed $p$-adic $L$-functions of Pollack \cite{pollack03}. The construction of Pollack's signed $p$-adic $L$-functions has been generalized to higher weight non-ordinary modular forms in \cite{LLZ0} using $p$-adic Hodge theory.
 It would be interesting to know whether relations analogous to those in Theorem \ref{main1} hold between the Iwasawa invariants of Mazur-Tate elements and the $p$-adic $L$-functions of \cite{LLZ0}. 
 
 Indeed, computations suggest that Mazur-Tate elements with sufficiently large slope also follow the behavior of Theorem \ref{main1}. For example, the $a_p=0$-type behavior is consistent with a `strange example' of \cite[\S7]{PW}: there is a 3-non-ordinary newform $f\in S_{18}(\Gamma_0(17))$  
with $\ord_3a_3(f)=5$ and Mazur-Tate elements satisfying  $\lambda(\theta_n(f))=3^{n}-3^{n-2}+q_{n-2}$. Note that, despite the fact that $a_3(f)\neq 0$, these $\lambda$-invariants follow the same pattern of Theorem \ref{main} since 
$$
3^{n}-3^{n-2}+q_{n-2}=(18-1)q_n-5(3^n-3^{n-1})+\begin{cases}
4 & \text{$n$ even,}\\
12 & \text{$n$ odd.}\\
\end{cases}
$$
This suggests that the constants 4 and 12 may be the $\lambda$-invariants attached to a pair of $p$-adic $L$-functions for $f$ -- perhaps those of \cite{LLZ0}. The construction of the $p$-adic $L$-functions of \cite{LLZ0} requires a choice of basis of the Wach module attached to $\rho_f$, for which an explicit such choice exists (due to Berger, Li, and Zhu -- see \cite{bergerlizhu04} and \cite[\S4]{LLZ0}) when $f$ has sufficiently large slope $\ord_p(a_p)>\lfloor\frac{k-2}{p-1}\rfloor$.\footnote{While the example of Pollack and Weston does not satisfy this bound, it is known that this bound need not be optimal (see \cite[Remark 4.1.2]{bergerlizhu04}).} In particular, the Berger--Li--Zhu basis involves the same signed logarithms used to define $\log_{k,n}^\pm$ in Theorem \ref{main1}, which could explain why similar behavior is seen in the Mazur-Tate elements across modular forms with large slope.

\subsection{Acknowledgements} The author would like to thank Robert Pollack and Tom Weston for their guidance and support; Tori Day, Jeffrey Hatley, Antonio Lei, Florian Sprung, and Siman Wong for many helpful conversations; and Vicen\c{t}iu Pa\c{s}ol and Alexandru Popa for generously sharing their Magma package on period polynomials (see \cite{PasolPopa}), without which many of the computations in this paper would not be possible. We also thank the anonymous referees for their helpful comments.

\subsection{Notation}

Fix throughout a prime $p\geq 2$ and an embedding $\iota:\overline{\Q}\hookrightarrow \overline{\Q_p}$. Let $\ord_p$ denote the unique valuation on $\overline{\Q_p}$ satisfying $\ord_p(p)=1$. For convenience, whenever $x\in \overline{\Q}$ we omit the embedding $\iota$ and simply write $\ord_px$ to denote the $p$-adic valuation of $\iota(x)$. 
For a finite extension $K/\Q_p$ with valuation ring $\Oo\subseteq K$, let $\Lambda=\Oo[[T]]$, $\Lambda_n=\Oo[T]/(\omega_n)$, $\Lambda_K=\Lambda\otimes K$, and $\Lambda_{K,n}=\Lambda_n\otimes K\cong K[T]/(\omega_n)$, where $\omega_n=(1+T)^{p^n}-1$. 

Let $S_k(N,\e)$ denote the space of cuspforms of level $\Gamma_1(N)$ and nebentype $\e$. When presenting examples, we often identify a modular form using its Magma \cite{Magma} label. (One can use this label to recover the modular form in question by entering the command \texttt{Newform("label")} in Magma.)
 We say that an eigenform $f=\sum a_nq^n$ is ordinary at $p$ if $\ord_p(\iota(a_p))=0$ and non-ordinary at $p$ if $\ord_p(\iota(a_p))>0$. 
We always write $\e_n$ to denote the sign of $(-1)^n$.  In order to deal with the prime $p=2$ simultaneously, we adopt the notation
$$
\star = \begin{cases}+&\quad\text{if $p>2$,} \\
- &\quad\text{if $p=2$.}
\end{cases}
$$
Let $\omega:(\Z/2p\Z)^\times \rightarrow \Z_p^\times$ denote the mod $p$ cyclotomic character, which (for odd primes) sends $a\in (\Z/p\Z)^\times$ to the unique $(p-1)$st  root of unity $\omega(a)\in \Z_p^{\times}$ with $\omega(a)\equiv a\Mod p$. By precomposing with reduction modulo $p$, we also view $\omega$ as a character on $\Z_p^\times$ and let $\langle \cdot \rangle:\Z_p^\times\rightarrow 1+2p\Z_p$ denote the projection $x\mapsto x\omega^{-1}(x)$. Unless otherwise stated, $\gamma$ denotes a fixed topological generator of $1+2p\Z_p$ and $\log_\gamma$ is the function $\log_p/\log_p(\gamma)$, where $\log_p$ is the usual $p$-adic logarithm. 

\setcounter{tocdepth}{1}
\tableofcontents

\section{Preliminaries}\label{section_prelim}

In this section we recall the definitions and basic results on Iwasawa invariants, Mazur-Tate elements, and Pollack's signed $p$-adic $L$-functions. 

\subsection{Iwasawa invariants} \label{section_iwinv}
Let $K/\Q_p$ denote a fixed finite extension with valuation ring $\Oo$ and uniformizer $\varpi$. 

\begin{definition} \nf The \emph{Iwasawa invariants} attached to a nonzero power series $F=\sum_{i=0}^\infty a_iT^i \in \Lambda_K$ are defined by
\begin{align*}
\mu(F)&= \min\{\ord_p (a_i)\mid i\geq 0\}\in e^{-1}\Z,\\
\lambda(F) &= \min \{\, i \, \mid \, \ord_p (a_i)=\mu(F)\}\in \Z_{\geq 0},
\end{align*}
where $e$ is the ramification index of $K/\Q_p$. Define $\mu(0)=\lambda(0)=\infty$. 
\end{definition}
 
By the Weierstrass preparation theorem, any $F\in \Lambda_K$ has a unique decomposition 
$$
F=p^{\mu(F)}(T^{\lambda(F)}+\varpi F_0)U,
$$
where $F_0\in \Oo[T]$ is a polynomial of degree $<\lambda(F)$ and $U\in \Lambda^\times$. 

\begin{lemma}\label{lem_basicIw} Let $x\in \overline{K}$ be nonzero and let $F,G\in \Lambda_K$.
\begin{enumerate}
\item$ \lambda(FG)=\lambda(F)+\lambda(G)$.
\item $\mu(FG)=\mu(F)+\mu(G)$.
\item$ \lambda(xF)=\lambda(F)$.
\item $\mu(xF)=\ord_px+\mu(F)$.
\end{enumerate}
\end{lemma}
\begin{proof} Parts (1) and (2) follow from the uniqueness part of the Weierstrass preparation theorem, and parts (3) and (4) are special cases of (1) and (2), respectively.
\end{proof}

\begin{definition} \nf Define the Iwasawa invariants of an element $\bar F\in \Lambda_{K,n}$ to be those of the unique smallest degree polynomial lying in the fiber of $\bar F$ under the map 
$$
\pi_n: \Lambda_{K}\twoheadrightarrow \Lambda_{K,n},\qquad F\mapsto F\Mod \omega_n.
$$
Note that $\lambda(\pi_n(F))<\deg(\omega_n)=p^n$ by definition. 
\end{definition}

\begin{example}\label{tpn} \nf Since $T^{p^n}\equiv -\sum_{i=1}^{p^{n}-1}\binom{p^n}{i}T^i \Mod \omega_n$ (expand $(1+T)^{p^n}-1$) and $\ord_p\binom{p^n}{i}=n-\ord_p(i)$, we have 
\begin{align*}
\mu\big(\pi_n(T^{p^n})\big)&=\mu(\textstyle\sum_{i=1}^{p^{n}-1}\binom{p^n}{i}T^i)=1,\\
\lambda\big(\pi_n(T^{p^n})\big)&=\lambda(\textstyle\sum_{i=1}^{p^{n}-1}\binom{p^n}{i}T^i)=p^{n-1}.
\end{align*}
Compare this to the fact that in $\Lambda_K$, we have $\mu(T^{p^n})=0$ and $\lambda(T^{p^n})=p^n$.
\end{example}

\begin{remark} \nf By fixing an isomorphism $\Oo[\Gamma_n]\cong \Lambda_n$, where $\Gamma_n\cong \Z/p^n\Z$, one can show using Hensel's lemma and the Weierstrass preparation theorem that the definition of Iwasawa invariants above agrees with that of \cite{PW}. See \cite[Proposition 2.1]{GCMB}. 
\end{remark}

Directly from the definitions, we have
\begin{align}
\mu(\pi_n(xF))&=\ord_px +\mu(\pi_n(F)),\label{nliw1}\\
\lambda(\pi_n(xF))&=\lambda(\pi_n(F)). \label{nliw2}
\end{align}
for any nonzero $x\in \overline{K}$. However, properties (1) and (2) of Lemma \ref{lem_basicIw} need not hold in $\Lambda_n$. For instance, using Example \ref{tpn}, we have 
\begin{align*}
\mu(\pi_1(T^p))=1 &\neq 0=\mu(\pi_1(T))+\mu(\pi_1(T^{p-1})),\\
\lambda(\pi_1(T^p))=1&\neq p =\lambda(\pi_1(T))+\lambda(\pi_1(T^{p-1})).
\end{align*}

We also see from Example \ref{tpn} that the Iwasawa invariants of $F$ and $\pi_n(F)$  need not agree in general. The relationship between these Iwasawa invariants will be examined in detail in \S\ref{section_finite-layer}. For now, observe that since $F=\lim\pi_n(F)$ one clearly has $\lambda(F)=\lambda(\pi_n(F))$ and $\mu(F)=\mu(\pi_n(F))$ for $n\gg0$. The smallest $n$ guaranteeing this equality is given by the following well-known criterion.

\begin{lemma}\label{small_lam} Let $F\in \Lambda_K$ and fix $n\geq 0$. If $\lambda(F)<p^n$ then the Iwasawa invariants of $F$ and $\pi_n(F)$ agree. 
\end{lemma}
\begin{proof} See \cite[Lemma 2.2]{GCMB} for a proof over $\Z_p[[T]]$. Since the Weierstrass preparation theorem also holds over $\Lambda_K$, the same argument applies. 
\end{proof}

\begin{corollary} \label{nunits} If $U\in \Lambda^\times$ then the Iwasawa invariants of $\pi_n(UF)$ and $\pi_n(F)$ agree.
\end{corollary}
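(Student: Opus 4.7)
The plan is to reduce this to Corollary~\ref{smalllam}. By definition, the Iwasawa invariants of $\pi_n(F)\in \Lambda_{K,n}$ are those of its unique polynomial representative $\hat F \in K[T]$ of degree less than $p^n$; in particular, $\lambda(\hat F)\leq \deg\hat F< p^n$. Since $\pi_n$ is a ring homomorphism and $\pi_n(\hat F)=\pi_n(F)$, we have
\begin{equation*}
\pi_n(UF)=\pi_n(U)\pi_n(F)=\pi_n(U)\pi_n(\hat F)=\pi_n(U\hat F),
\end{equation*}
so it suffices to compare the Iwasawa invariants of $\pi_n(U\hat F)$ and $\pi_n(\hat F)$.

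Since $U\in \Lambda^\times$ satisfies $\lambda(U)=\mu(U)=0$, the multiplicativity of the invariants upstairs (recorded just after the Weierstrass decomposition in \S\ref{classicalinvsection}) gives $\lambda(U\hat F)=\lambda(\hat F)<p^n$ and $\mu(U\hat F)=\mu(\hat F)$. Corollary~\ref{smalllam} then applies to both $\hat F$ and $U\hat F$, and yields
\begin{equation*}
\lambda(\pi_n(U\hat F))=\lambda(U\hat F)=\lambda(\hat F)=\lambda(\pi_n(\hat F)),
\end{equation*}
with the analogous statement for $\mu$. Combined with the displayed identity $\pi_n(UF)=\pi_n(U\hat F)$, this gives $\lambda(\pi_n(UF))=\lambda(\pi_n(F))$ and $\mu(\pi_n(UF))=\mu(\pi_n(F))$.

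There is no substantive obstacle here: the entire argument rests on the observation that choosing the smallest-degree polynomial representative $\hat F$ automatically puts us in the regime $\lambda(\hat F)<p^n$ where Corollary~\ref{smalllam} guarantees that $\pi_n$ preserves invariants, and that same corollary continues to apply after multiplying by the unit $U$ because doing so leaves the upstairs $\lambda$-invariant unchanged.
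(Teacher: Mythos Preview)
Your proof is correct and is essentially identical to the paper's own argument: both pass to the polynomial representative $\hat F=F_n$ of degree $<p^n$, note that multiplying by the unit $U$ preserves the upstairs $\lambda$-invariant so that $\lambda(U\hat F)<p^n$, and then invoke Corollary~\ref{smalllam} to conclude. The only cosmetic difference is that the paper phrases the reduction via the division algorithm $F=\omega_n Q_n+F_n$, whereas you name $\hat F$ directly as the canonical representative.
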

\begin{proof} Recall that a polynomial $D\in \Oo[T]$ is called distinguished if $D\equiv T^{\deg D}\Mod \varpi$. Using the division algorithm for distinguished polynomials \cite[Proposition 7.2]{washington82}, we can write $F=\omega_nQ_n+F_n$ for some $Q_n\in \Lambda_K$ and $F_n\in K[T]$ with $\deg F_n<p^n$. Now
$$
\lambda(\pi_n(F))=\lambda(F_n)=\lambda(UF_n)=\lambda(\pi_n(UF_n))=\lambda(\pi_n(UF)),
$$ 
where the third equality follows from Lemma \ref{small_lam} (because $\lambda(UF_n)=\lambda(F_n)<p^n$) and the final equality is because $UF\equiv UF_n\Mod \omega_n$. The same string of equalities holds for $\mu$-invariants.
\end{proof}

In the next two lemmas, we fix $F\in \Lambda_K$ with Iwasawa invariants $\lambda$ and $\mu$, and write $F=p^\mu(T^\lambda+\varpi F_0)U,$ where $F_0\in \Oo[T]$ has degree $<\lambda$.



\begin{lemma}\label{mulamfromval}  If $x\in \overline{K}$ satisfies $0\leq \ord_px<\frac{1}{e\lambda}$ then 
$$
\ord_p F(x)=\mu+\lambda\ord_px.
$$
\end{lemma}
\begin{proof} 
Observe that 
$$
\ord_p (x^\lambda+\varpi F_0(x))=\min(\lambda\ord_p x,1/e+\ord_p F_0(x))=\lambda\ord_p x
$$
since $\lambda\ord_p x< 1/e+\ord_pF_0(x)$. The statement now follows from the Weierstrass decomposition of $F$. 
\end{proof}

Recall $\gamma$ is a fixed topological generator of the subgroup $1+2p\Z_p$ of $\Z_p^\times$. Define the map
$$
\eta:\Lambda_K\rightarrow\Lambda_K,\quad F(T)\mapsto  F\big(\gamma(1+T)-1\big),
$$
and let $\eta^j$ denote the $j$-fold composition $\eta\circ \cdots \circ \eta$. Note that  $\eta$ is a (ring) automorphism of $\Lambda_K$ and 
\begin{equation}\label{eta}
\eta^j(F)=F(\gamma^j(1+T)-1)
\end{equation}
for all $j\in \Z$.  The following lemma will be useful in later sections. 

\begin{lemma}\label{etainv} The Iwasawa invariants of $\eta^j(F)$ and $F$ agree for all $j\in \Z$. 
\end{lemma}
\begin{proof}
 It suffices to show that $\eta^j(T^\lambda+\varpi F_0)$ is a distinguished polynomial and that $\eta^j(U)$ is a unit, in which case the Weierstrass decomposition of $\eta^j(F)$ is simply $p^{\mu}\eta^j(T^{\lambda}+\varpi F_0)\eta^j(U)$. Since $\gamma^j\equiv1 \Mod \varpi$ we have
$$
\eta^j(T^{\lambda}+\varpi F_0)\equiv \big(\gamma^j(1+T)-1)^{\lambda}\equiv T^{\lambda}\Mod \varpi.
$$
Hence $\eta^j$ preserves distinguished polynomials. Furthermore, writing $U=\sum_{n\geq 0}u_nT^n$ we see that
$$
\ord_p\eta^j(U)(0)=\ord_pU(\gamma^j-1)=\min_{n}\ord_p\big(u_n(\gamma^j-1)^n\big)=\ord_p u_0=0
$$
because $\ord_p(\gamma^j-1)\geq 1$.
\end{proof}

\subsection{Mazur-Tate Elements}\label{MTLpsection} 

\subsubsection{Modular symbols} \label{section_modsymb}
Let $V_g(R)=\Sym^g(R^2)$ be the algebra of degree $g$ homogeneous polynomials in the variables $X$ and $Y$ over a commutative ring $R$. There is a right action of $\GL_2(R)$ on $V_{g}(R)$ given by 
$$
P(X,Y)\mid\gamma=P((X,Y)\gamma^*)=P(dX-cY,-bX+aY)
$$
where $\gamma= \begin{psmallmatrix} a &b\\ c&d\end{psmallmatrix}$ and $\gamma^*=\begin{psmallmatrix} d &-b\\ -c&a\end{psmallmatrix}$. Writing $\Delta_0=\Div^0(\mathbf{P}^1(\Q))$, define an action of the semigroup $M_2(\Z)$ of $2\times2$ integer matrices on  $\xi\in \Hom_\Z(\Delta_0, V_{g}(R))$ by 
$$
(\xi\mid\gamma)(D)=\xi(\gamma D)\mid\gamma,\qquad D\in \Delta_0
$$
where the action of $\gamma\in M_2(\Z)$ on $\Delta_0$ is by fractional linear transformations. The space of \emph{modular symbols of weight $g$} for a congruence subgroup $\Gamma\subseteq \SL_2(\Z)$ is defined by
$$
\Symb_\Gamma(V_g(R))=\Hom_\Gamma(\Delta_0, V_{g}(R))=\{\xi:\Delta_0\rightarrow V_{g}(R)\mid \xi|\gamma=\xi \,\,\text{for all $\gamma\in \Gamma$}\}. 
$$

\noindent Fix a modular symbol $\xi\in \Symb_\Gamma(V_g(R))$ and an integer $j\in \{0,\dots, g\}$. 
\begin{definition}\nf
For coprime integers $a$ and $m$ with $1\leq a<m$, or for the pair $m=1$ and $a=0$, define $\big[\frac{a}{m}\big]_{\xi, j}\in R$ by the equation
\begin{equation}\label{modularele}
\big(\xi \mid \begin{psmallmatrix} 1 & a\\ 0&m\end{psmallmatrix}\big)(\{\infty\}-\{0\})=\sum_{j=0}^{g}\bigg[\frac{a}{m}\bigg]_{\xi,j} X^jY^{g-j}.
\end{equation}
Define the \emph{Mazur-Tate element} for the tuple $(\xi ,m,j)$ by
 $$
 \vartheta_{m,j}(\xi)=\sum_{a\in (\Z/m\Z)^\times} \bigg[\frac{a}{m}\bigg]_{\xi,j} \cdot \sigma_a\in  R[\Gg_m],
 $$
  where $\Gg_m=\Gal(\Q(\zeta_m)/\Q)$ and $\sigma_a: \zeta_m\mapsto \zeta_m^a$. (Here $\zeta_m$ denotes a primitive $m$th root of unity.)
 \end{definition}

\begin{remark} \nf The Mazur-Tate elements of \cite{PW} are obtained by taking $j=0$ and  $m$ to be a prime power in the above definition. 
\end{remark}

\subsubsection{Cohomological symbols} \label{section_coho} Suppose $R$ is a discrete valuation ring with associated (nonarchimedean) valued field $(K=\Frac(R),|\cdot |_K)$. In this case, the space $\Symb_{\Gamma}(V_g(K))$ is equipped with a norm
$$
||\xi|| = \max_{D\in \Delta_0}||\xi(D)||,
$$
where the value of $||\cdot ||$ on a homogeneous polynomial $F=\sum_{j=0}^g a_jX^jY^{g-j}\in V_g(K)$ is given by 
$
||F|| = \max_{0\leq j\leq g}|a_j|_K.
$
Following \cite{PW}, we say that $\xi\in \Symb_\Gamma(V_g(K))$ is \emph{cohomological} if $||\xi||=1$. 
By scaling by an appropriate power of a uniformizer, one can associate to any nonzero $K$-valued symbol a unique (up to $R$-units) cohomological symbol.

\begin{lemma}\label{coholem} If $\xi\in \Symb_\Gamma(V_g(K))$ is cohomological then $[a/m]_{\xi,j}\in R$.
\end{lemma}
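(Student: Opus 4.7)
The plan is to unwind the defining identity for $[a/m]_{\xi,j}$ and show that its left-hand side, viewed as a polynomial in $V_g(K)$, actually lies in $V_g(R)$; reading off coefficients then yields the claim. The cohomological hypothesis $\|\xi\|=1$ translates to $\|\xi(D)\|\leq 1$ for every $D\in\Delta_0$, which is to say $\xi(\Delta_0)\subseteq V_g(R)$.

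First I would rewrite the left-hand side of the defining identity via the definition of the slash action as
$$\xi\bigl(\gamma_{a,m}(\{\infty\}-\{0\})\bigr)\mid\gamma_{a,m},\qquad \gamma_{a,m}=\begin{psmallmatrix}1&a\\0&m\end{psmallmatrix}.$$
A direct computation of the fractional linear action gives $\gamma_{a,m}\cdot\infty=\infty$ and $\gamma_{a,m}\cdot 0=a/m$, so the inner divisor is $\{\infty\}-\{a/m\}\in\Delta_0$, and cohomologicality places $\xi(\{\infty\}-\{a/m\})$ inside $V_g(R)$.

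The remaining step is to verify that the slash action by $\gamma_{a,m}$ preserves the integral lattice $V_g(R)$. This is immediate from the explicit formula $P(X,Y)\mid\gamma_{a,m}=P(mX,Y-aX)$: since $m,a\in\Z\subseteq R$, this is polynomial substitution with $R$-integral coefficients, and therefore stabilizes $V_g(R)$. Applying this to $\xi(\{\infty\}-\{a/m\})\in V_g(R)$ produces a polynomial in $V_g(R)$ whose coefficients are exactly the $[a/m]_{\xi,j}$. The degenerate case $m=1$, $a=0$ corresponds to $\gamma_{a,m}=I$ and is automatic.

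At bottom the lemma reduces to the observation that the semigroup action of $M_2(\Z)\subseteq M_2(R)$ on $V_g(K)$ preserves the integral lattice $V_g(R)$, so there is essentially no substantive obstacle; the content is purely in unwinding definitions.
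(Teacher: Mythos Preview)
Your argument is correct and essentially identical to the paper's: both unwind $(\xi\mid\gamma_{a,m})(\{\infty\}-\{0\})=\xi(\{\infty\}-\{a/m\})\mid\gamma_{a,m}$, use cohomologicality to place $\xi(\{\infty\}-\{a/m\})$ in $V_g(R)$, and then observe that the $M_2(\Z)$-action preserves integrality. The only cosmetic difference is that the paper phrases the last step as the norm inequality $\|F\mid\gamma\|\leq\|F\|$ rather than your lattice-preservation statement, but since $V_g(R)=\{F:\|F\|\leq 1\}$ these are the same observation.
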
 
\begin{proof} Since $R$ is a discrete valuation ring, one can check that $||F\, |\, \gamma||\leq ||F||$ for any $F\in V_g(K)$ and $\gamma\in M_2(\Z)$. Hence if $\gamma=\begin{psmallmatrix} 1 & a\\ 0&m\end{psmallmatrix}$ then
$$
||\big(\xi \mid \gamma \big)(\{\infty\}-\{0\})||=||\xi(\{\infty\}-\{a/m\})\mid \gamma|| \leq ||\xi(\{\infty\}-\{a/m\})||\leq 1.
$$
\end{proof}

\subsubsection{Mazur-Tate elements of modular forms}\label{section_MT} Let $f=\sum a_nq^n\in S_k(N,\e)$ denote a cuspidal newform of weight $k\geq 2$, level $\Gamma_1(N)$, and nebentype $\e$. Associated to $f$
 is the map $\xi_f:\Delta_0\rightarrow V_{k-2}(\C)$ defined by
\begin{align*}
\xi_f(\{r\}-\{s\})=2\pi i \int_s^r f(z)(zX+Y)^{k-2}dz,
\end{align*}
where the integral is over the geodesic in the complex upper half-plane connecting $r$ and $s$. One can check  that for any $\gamma\in \GL_2^+(\Q)$ we have 
$
\xi_f| \gamma=\xi_{f|_k \gamma},
$
where $(f|_k \gamma)(z)=\det\gamma^{k-1}(cz+d)^{-k}f(\gamma z)$ is the usual slash operator on modular forms. Thus $\xi_f$ inherits $\Gamma_1(N)$-invariance from $f$ and defines a modular symbol on $V_{k-2}(\C)$.

Let  $K_f/\Q$ denote the number field generated by the Fourier coefficients of $f$. Since $\Gamma_1(N)$ is normalized by the matrix $\iota=\begin{psmallmatrix} -1 &0\\ 0&1\end{psmallmatrix}$, we can write $\xi_f=\xi_f^++\xi_f^-$, where  $\xi^\pm=\frac{\xi\pm\xi\mid\iota}{2}$ lie in the $\pm$-eigenspace for $\iota$. It is well-known (see \cite[Proposition 5.11]{PasolPopa}, for example) that there exist complex periods $\Omega^\pm_f\in \C^{\times}$ such that the modular symbols
$
\varphi_f^\pm:=\xi_f^\pm/\Omega_f^\pm
$
are defined over $K_f$, and we therefore define
$$
\vartheta_{m,j}^\pm(f):=\vartheta_{m,j}(\varphi_f^\pm)\in K_f[\Gg_m].
$$
From the definitions, note that the coefficients of $\vartheta_{m,j}^\pm(f)$, which we denote $[a/m]_{f,j}^\pm:=[a/m]_{\varphi_f^\pm,j}$, can be expressed explicitly in terms of integrals:
\begin{equation}\label{integrals}
\bigg[\frac{a}{m}\bigg]_{f,j}^\pm =\frac{\pi i}{\Omega_f^\pm}\binom{k-2}{j}\bigg( \int_{a/m}^{i\infty}f(z)(mz-a)^jdz\pm (-1)^{k-j} \int_{-a/m}^{i\infty}f(z)(mz+a)^jdz\bigg).
\end{equation}

An interesting property of Mazur-Tate elements is that they interpolate the algebraic parts of the critical $L$-values for $f$ -- see \cite[Proposition 2.3]{PW} for an interpolation formula at $j=0$ and characters of $p$-power conductor. More generally, we have the following: 

\begin{proposition}\label{Lvaluesprop} For a primitive character $\chi:(\Z/m\Z)^\times\rightarrow \C^\times$ we have
$$
\chi\big(\vartheta^\pm_{m,j}(f)\big)=\begin{cases}\binom{k-2}{j}\frac{j!m^j\tau(\chi)}{\Omega^{\pm}_f(-2\pi i)^j}\cdot  L(f_{ \chi^{-1}},j+1)\in K_f(\chi) &\quad \text{if $\pm=\sgn \chi(-1)(-1)^{k-j},$}\\
0 & \quad\text{otherwise,}\\
\end{cases}
$$
where $K_f(\chi)=K_f(\im\chi)$, $\tau(\chi)=\sum_{a\in (\Z/m\Z)^\times}\chi(a)\zeta_m^a$ is a classical Gauss sum, and $f_{\chi}=\sum \chi(n)a_nq^n$ denotes the twist of $f$ by $\chi$. 
\end{proposition}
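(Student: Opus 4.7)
\emph{Proof plan.} The strategy is the standard unfolding argument that converts a twisted period integral into a Dirichlet $L$-value via Mellin transform and Gauss-sum inversion. I will work directly from the integral representation \eqref{integrals}, and the main task is to book-keep constants, signs, and parities carefully.

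First, I apply $\chi$ to $\vartheta^\pm_{m,j}(f) = \sum_{a} [a/m]_{f,j}^\pm \sigma_a$ and substitute \eqref{integrals} to obtain
$$
\chi\bigl(\vartheta^\pm_{m,j}(f)\bigr) = \frac{\pi i}{\Omega_f^\pm}\binom{k-2}{j}\sum_{a\in(\Z/m\Z)^\times}\chi(a)\Bigl(I_a \pm (-1)^{k-j}I'_a\Bigr),
$$
where $I_a = \int_{a/m}^{i\infty} f(z)(mz-a)^j dz$ and $I'_a = \int_{-a/m}^{i\infty} f(z)(mz+a)^j dz$. Re-indexing $a\mapsto -a$ in the second sum produces $\sum_a \chi(a) I'_a = \chi(-1)\sum_a \chi(a) I_a$, so the two inner sums collapse to a single sum times the factor
$$
1\pm (-1)^{k-j}\chi(-1).
$$
This factor is $2$ precisely when $\pm = \sgn\bigl((-1)^{k-j}\chi(-1)\bigr)$, which is the parity condition in the statement; otherwise it vanishes, giving the zero case immediately.

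In the non-vanishing case, I substitute $z\mapsto z+a/m$ in $I_a$, turning it into $m^j \int_0^{i\infty} z^j f(z+a/m)\,dz$. Now I can pull the sum over $a$ inside the integral and apply the classical identity
$$
\sum_{a\in(\Z/m\Z)^\times}\chi(a)\,f\!\bigl(z+\tfrac{a}{m}\bigr) = \tau(\chi)\,f_{\chi^{-1}}(z),
$$
which follows from $q$-expansion by inverting the Gauss sum and using primitivity of $\chi$ to kill terms where $(n,m)>1$. This reduces everything to
$$
\chi\bigl(\vartheta^\pm_{m,j}(f)\bigr) = \frac{2\pi i}{\Omega_f^\pm}\binom{k-2}{j}\,m^j\tau(\chi)\int_0^{i\infty} z^j f_{\chi^{-1}}(z)\,dz.
$$
Finally, the change of variable $z=it$ rewrites the last integral in terms of the Mellin transform $\int_0^{\infty} f_{\chi^{-1}}(it)\,t^j\,dt = j!/(2\pi)^{j+1}\cdot L(f_{\chi^{-1}},j+1)$, producing a power of $i$ that (after combining with the leading $2\pi i$) yields the factor $1/(-2\pi i)^j$ claimed in the proposition.

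The main obstacle is purely bookkeeping: keeping the powers of $i$, the factors of $2\pi$, and the sign $(-1)^{k-j}$ consistent through the change of variables, and verifying that the parity condition coming from the $\iota$-eigenspace decomposition of $\varphi_f^\pm$ agrees with the one coming from $\chi(-1)$. No deep input is required beyond the integral formula \eqref{integrals}, Gauss-sum inversion, and the Mellin-transform definition of $L(f_{\chi^{-1}},s)$.
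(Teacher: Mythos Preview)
Your proposal is correct and follows essentially the same route as the paper: both reduce to the parity factor $1\pm(-1)^{k-j}\chi(-1)$ via the substitution $a\mapsto -a$, and then invoke the standard Gauss-sum/Mellin unfolding that converts $\sum_a\chi(a)\int_{a/m}^{i\infty}f(z)(mz-a)^j\,dz$ into the twisted $L$-value. The paper simply cites \cite[Proposition 3]{MTT} (for the fact that the period integrals depend only on $a\bmod m$, which you implicitly use in the reindexing step) and \cite[8.6]{MTT} (for the unfolding you carry out explicitly), so your argument is a spelled-out version of the same proof.
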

\begin{proof} 
By \cite[Proposition 3]{MTT}, the values $\xi_{f,j}(a,m):=2\pi i \binom{k-2}{j}\int_{a/m}^{i\infty}f(z)(mz-a)^jdz,$ depend only on $a\Mod m$, so
$$
\sum_{a\in (\Z/m\Z)^\times}\chi(a)\xi_{f,j}(-a,m)=\chi(-1)\sum_{a\in (\Z/m\Z)^\times}\chi(a)\xi_{f,j}(a,m).
$$
Thus
\begin{align*}
\chi\big(\vartheta^\pm_{m,j}(f)\big)&=\bigg(\frac{1\pm\chi(-1)(-1)^{k-j}}{2\Omega_f^\pm}\bigg)\sum_{a\in (\Z/m\Z)^\times} \chi(a)\xi_{f,j}(a,m)\\
&=\begin{cases} \frac{1}{\Omega_f^\pm}\sum_{a\in (\Z/m\Z)^\times} \chi(a)\xi_{f,j}(a,m) &\quad \text{if $\pm=\sgn \chi(-1)(-1)^{k-j},$}\\
0 & \quad\text{otherwise.}
\end{cases}
\end{align*}
The result now follows from \cite[8.6]{MTT}, which states that
$$
\sum_{a\in (\Z/m\Z)^\times} \chi(a)\xi_{f,j}(a,m)=-\binom{k-2}{j}\frac{j!m^j\tau(\chi)}{(-2\pi i)^j}\cdot  L(f_{ \chi^{-1}},j+1).
$$
\end{proof}
We now define the $p$-adic Mazur-Tate elements attached to $f$.  Let $K/\Q_p$ be the completion of $K_f$ at the prime over $p$ determined by our fixed embedding $\overline{\Q}\hookrightarrow \overline{\Q_p}$. 
With respect to this embedding, we may view the $K_f$-valued modular symbols $\varphi_f^\pm$ as being $K$-valued.  The symbols $\varphi_f^\pm$ depend on our choice of periods $\Omega_f^\pm$ and, while $\lambda$-invariants are stable under scaling, the choice of periods can affect the $\mu$-invariants of the Mazur-Tate elements attached to $\varphi_f^\pm$. In order to normalize this choice, we follow \cite{PW} and assume  from now on that $\varphi_f^\pm$ are cohomological symbols (see \S\ref{section_coho}). In other words, we choose periods $\Omega_f^\pm$ so that $||\xi_f^\pm/\Omega_f^\pm||=1$ with respect to $\overline{\Q}\hookrightarrow \overline{\Q_p}$. In particular, by Lemma \ref{coholem}, this choice means that the Mazur-Tate elements $\vartheta_{m,j}^\pm(f)$ have coefficients in the valuation ring $\Oo$ of $K$.

For the remainder of this article, we adopt the following notation: for $n\geq 0$, write 
$$
N = N(p,n)= \begin{cases} n+1 &\text{if $p>2$,}\\
n+2 & \text{if $p=2$}.
\end{cases}
$$ 
(The distinction between $N$ as the level of a modular form and its meaning above will always be clear from the context.)
There is a canonical decomposition 
$$
\Gg_{p^N}=\Delta\times\Gamma_n, \qquad 
$$
 where $\Delta$ is cyclic of order $p-1$ (resp., order 2) for $p$ odd (resp., $p=2$) and $\Gamma_n$ is cyclic of order $p^n$. For any character $\psi\in \hat \Delta=\Hom(\Delta,\Z_p^\times)$, we therefore have an induced map $\psi:\Oo[\Gg_{p^N}]\rightarrow \Oo[\Gamma_n]$ obtained by writing $\sigma_a\in \Gg_{p^N}$ as $\sigma_a=\delta(a)\gamma_n(a)$, where $\delta(a)\in \Delta$ and $\gamma_n(a)\in \Gamma_n$, and defining $\psi( \sigma_a)=\psi(\delta(a))\gamma_n(a)$. 
 
 \begin{definition} Let $\psi\in \hat \Delta$ and define
 $$
\theta_{n,j}^\psi(f)=\psi(\vartheta_{p^N,j}^{*}(f))=\sum_{a\in (\Z/p^N\Z)^\times} \psi\big(\delta(a)\big)\bigg[\frac{a}{p^N}\bigg]^{*}_{f,j}\cdot\gamma_n(a)\in \Oo[G_n],
$$
where $*=\sgn\psi(-1)(-1)^{k-j}$. When $\psi$ is trivial and $j=0$ we simply write $ \theta_n(f)$. 
 \end{definition}
 
\subsection{$p$-adic $L$-functions}\label{section_Lpdef}
Here we recall relevant results on Pollack's signed $p$-adic $L$-functions that will be used in the next section. The reader is referred to \cite[\S\S2-5]{pollack03} for details. Let $\alpha$ be an admissible (i.e., $\ord_p(\alpha)<k-1$) root of $X^2-a_pX+\e(p)p^{k-1}$ and let
\begin{equation}\label{def_Lp}
L_p(f,\alpha,\chi) = \int_{\Z_p^\times}\chi d\mu_{f,\alpha}^{\pm}
\end{equation}
denote the $p$-adic $L$-function of $f$ at $\alpha$, defined as in \cite{pollack03} with respect to our fixed choice of cohomological periods $\Omega_f^\pm$ and subject to the following remark.

\begin{remark}\label{compatibility} \nf Using the notation of \cite{pollack03}, we make two small modifications in the definition of the $p$-adic $L$-function: 
\begin{itemize}
\item (compare with \cite[\S 2.2]{pollack03}) We include the factor $(-1)^{k-j}$ in the modular symbols 
$$
\lambda^\pm(f,z^j;a,m) =\frac{\pi i}{\Omega_f^\pm}\bigg(\int_{i\infty}^{a/m}f(z)(mz-a)^j\, dz\pm (-1)^{k-j} \int_{i\infty}^{-a/m}f(z)(mz+a)^j\, dz\bigg)
$$ 
\item  (compare with \cite[Definition 2.9]{pollack03}) We include the weight of $f$ in the sign of the $p$-adic distribution $\mu_{f,\alpha}^\pm$ above and define  $\pm:=\sgn \chi(-1)(-1)^k$ in equation \eqref{def_Lp}.
\end{itemize}
\end{remark}

These modifications are needed in order to guarantee the following compatibility relation between the modular symbols $\lambda^\pm(f,z^j;a,m)$ of \cite{pollack03,MTT} and the symbols of \cite{PW}. In particular, up to a choice of periods, we have
\begin{equation}\label{kjl}
\binom{k-2}{j}\lambda^\pm(f,z^j;a,p^n)=\bigg[\frac{a}{m}\bigg]_{f,j}^\pm.
\end{equation}

 Any continuous character  $\chi: \Z_p^\times \rightarrow \C_p^\times$ has a unique decomposition 
$
\chi=\psi\chi_u,
$ for some $\psi\in\hat \Delta$ and $u\in \C_p$ with $|u-1|_p<1$, where $\chi_u$ is the character defined by
$$
\Z_p^\times\xrightarrow{x\mapsto \langle x\rangle } 1+2p\Z_p\xrightarrow{\gamma\mapsto u} \C_p^\times.
$$
(Note $\chi_u(x)=u^{\log_\gamma(x)}$ for all $x\in \Z_p^\times$.) The characters $\psi$ and $\chi_u$ are called the tame and wild parts of $\chi$, respectively. Define
$$
L_p(f,\alpha,\psi,T):= L_p(f,\alpha,\psi\chi_u)\in K(\alpha)[[T]], \qquad T=u-1.
$$
When $f$ is ordinary at $p$, there
is a unique admissible root $\alpha\in \Oo^\times$ and we define
$$
L_p(f,\psi,T):=L_p(f,\alpha,\psi,T)\in \Lambda_K.
$$

\subsubsection{Signed $p$-adic $L$-functions}\label{section_signedLp} We now review Pollack's plus/minus $p$-adic $L$-functions. Define
\begin{align*}
G_\psi^+(f,T) &:=\frac{L_p(f,\alpha_1,\psi,T)+L_p(f,\alpha_2,\psi,T)}{2}\in K[[T]],\\
G_\psi^-(f,T) &:=\frac{L_p(f,\alpha_1,\psi,T)-L_p(f,\alpha_2,\psi,T)}{2\alpha_1}\in K[[T]],
\end{align*}
where  $\alpha_1,\alpha_2$ are the roots of $X^2-a_pX+\e(p)p^{k-1}$.
Recalling our convention that $\star=+$ for odd $p$ and $\star=-$ otherwise, define
\begin{equation}\label{def_plus/minus}
L_{p,\psi}^\pm(f,T) =\frac{G_\psi^\pm(f,T)}{\log_{k}^{\star\pm}(T)},
\end{equation}
where 
\begin{align*}
\log_{k}^\pm(T)&=p^{-(k-1)}\prod_{j=0}^{k-2}\prod_{\substack{n=1\\ \e_n=\pm}}^\infty\frac{\Phi_{p^n}\big(\gamma^{-j}(1+T)\big)}{p}\in \Q_p[[T]].
\end{align*}
When $\psi=1$ we simply write $L_{p}^\pm(f,T)$. 

\begin{theorem} \label{theorem_pollackdecomp}
If $a_p(f)=0$ then $L_{p,\psi}^\pm(f)\in \Lambda_K$ and 
$$
L_p(f,\alpha,\psi,T)=\log_{k}^{\star}(T)L_{p,\psi}^+(f,T)+\alpha \log_{k}^{-\star} L_{p,\psi}^-(f,T). 
$$
\end{theorem}
\begin{proof} This is \cite[Theorem 5.1]{pollack03}.
\end{proof}

\section{Lifts of Mazur-Tate elements}\label{liftssection}

\noindent \it  Notation. \nf For the remainder of this article, we write $\Phi_m(T)=\sum_{i=0}^{p-1} T^{ip^{m-1}}$ for the $p^m$th cyclotomic polynomial and $\zeta_m$ for a primitive $p^m$th root of unity. 

\smallskip

The goal of this section is to prove Theorem  \ref{lifts1} and an effective version of Theorem \ref{main1} at small weights.
Let $f$ be as in \S\ref{section_MT}, let $j\in \{0,\dots, k-2\}$, and fix $\psi\in \hat\Delta$.
Throughout \S\ref{liftssection}, we identify $\theta_{n,j}^\psi(f)$ with its image $\theta_{n,j}^\psi(f,T)$ under the isomorphism $\Oo[\Gamma_n] \rightarrow \Lambda_n$, $\gamma_n \mapsto 1+T$, where $\gamma_n$ are compatible generators of $\Gamma_n$. 
Explicitly, 
 \begin{align*}
\theta_{n,j}^\psi(f,T)=\sum_{a \in (\Z/p^N\Z)^\times}\psi(a) \bigg[\frac{a}{p^N}\bigg]_{f,j}^{\sgn\psi(-1)(-1)^{k-j}} \cdot (1+T)^{\log_{\gamma}(a)},
 \end{align*}
 where we recall $\gamma$ is a fixed topological generator of $1+2p\Z_p$, $\log_\gamma=\frac{\log_p}{\log_p(\gamma)}$, and  $\log_\gamma(a):=\log_\gamma(\alpha)\Mod p^{n}$ for any $\alpha\in \Z_p^\times$ with $\alpha\equiv a\Mod p^N$ 

\subsection{Polynomial approximation}\label{pola}
Fix $n\geq 0$ and let $\alpha_1$, $\alpha_2\in \overline{\Q_p}$ be roots of the polynomial $X^2-a_pX+\e(p)p^{k-1}$ (the results of \S\ref{pola} hold for $a_p\neq 0$). Define the polynomial
$$
L_{n,j}(f,\alpha_i,\psi,T)=\binom{k-2}{j}\sum_{a\in (\Z/p^{N}\Z)^\times} \psi\omega^{-j}(a)\mu^{*}_{f,\alpha_i}(x^j;a+p^{N}\Z_p)(1+T)^{\log_\gamma(a)}\in K(\alpha_i)[T],
$$
where  $*=\sgn\psi(-1)(-1)^k$  
 and $\mu^{\pm}_{f,\alpha_i}$ is the $p$-adic distribution defined in \cite{pollack03} (subject to Remark \ref{compatibility}), and  $\omega:(\Z/2p\Z)^\times\rightarrow \Z_p^\times$ is the mod $p$ cyclotomic character.

\begin{proposition}\label{LpToLn} If $0\leq m\leq n$ then 
$$
\textstyle \binom{k-2}{j}L_p(f,\alpha_i,\psi,\gamma^j\zeta_m-1)=L_{n,j}(f,\alpha_i,\psi,\zeta_m-1).
$$
\end{proposition}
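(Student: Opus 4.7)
The plan is to express both sides as integrals of explicit characters against the $p$-adic distribution $\mu_{f,\alpha_i}^*$ on $\Z_p^\times$ and compare. By the construction of \cite{Pollack03}, as modified in Remark \ref{compatibility}, the power series $L_p(f,\alpha_i,\psi,T)$ is characterized by the interpolation formula
\begin{equation*}
L_p(f,\alpha_i,\psi,\eta(\gamma)-1)=\int_{\Z_p^\times}\psi(x)\,\eta(\langle x\rangle)\,d\mu^*_{f,\alpha_i}(x),
\end{equation*}
valid for any continuous character $\eta$ of $1+2p\Z_p$ with $\eta(\gamma)-1$ of positive $p$-adic valuation, where $*=\sgn\psi(-1)(-1)^k$.

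The first step is to set $\eta(y)=y^j\chi_m(y)$, where $\chi_m:1+2p\Z_p\to \bar\Q_p^\times$ is the finite-order character determined by $\chi_m(\gamma)=\zeta_m$; since both $\gamma^j-1$ and $\zeta_m-1$ lie in the maximal ideal, so does $\eta(\gamma)-1=\gamma^j\zeta_m-1$. Substituting into the interpolation formula and using the decomposition $x=\omega(x)\langle x\rangle$ to rewrite $\langle x\rangle^j=\omega^{-j}(x)x^j$ yields
\begin{equation*}
L_p(f,\alpha_i,\psi,\gamma^j\zeta_m-1)=\int_{\Z_p^\times}\psi\omega^{-j}(x)\,\chi_m(x)\,x^j\,d\mu^*_{f,\alpha_i}(x),
\end{equation*}
after viewing $\chi_m$ as a character of $\Z_p^\times$ via $x\mapsto\langle x\rangle$.

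The second step is to expand $L_{n,j}(f,\alpha_i,\psi,\zeta_m-1)$ directly from its definition: the factor $(1+T)^{\log_\gamma(a)}$ evaluates to $\zeta_m^{\log_\gamma(a)}=\chi_m(a)$, and the weight $\psi\omega^{-j}(a)\chi_m(a)$ is locally constant on cosets of $p^N\Z_p$ in $\Z_p^\times$ because $\psi\omega^{-j}$ factors through $(\Z/2p\Z)^\times$ while $\chi_m$ factors through $(\Z/p^{m+1}\Z)^\times\subseteq(\Z/p^N\Z)^\times$ (this is where $m\leq n$ enters). Therefore the finite Riemann-type sum defining $L_{n,j}(f,\alpha_i,\psi,\zeta_m-1)$ collapses to $\binom{k-2}{j}$ times the integral computed in step one, giving the identity.

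The main obstacle, in my view, is not the algebraic manipulation but the careful matching of sign and normalization conventions. One must verify that the sign $*=\sgn\psi(-1)(-1)^k$ appearing in $L_{n,j}$ agrees with the sign built into the definition of $L_p$ after the modifications in Remark \ref{compatibility}, and that the binomial factor $\binom{k-2}{j}$ appears only on the $L_{n,j}$ side (consistent with \eqref{kjl}). Once these bookkeeping issues are resolved, the proof reduces to the substitution and Riemann-sum collapse described above.
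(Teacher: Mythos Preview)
Your proposal is correct and follows essentially the same approach as the paper: both rewrite $L_p(f,\alpha_i,\psi,\gamma^j\zeta_m-1)$ as the integral of $\psi\omega^{-j}\chi_{\zeta_m}x^j$ against $\mu^*_{f,\alpha_i}$ using the decomposition $\chi_{\gamma^j}=x^j\omega^{-j}$, and then identify this integral with a finite Riemann sum. The only cosmetic difference is that the paper first reduces to a sum over $(\Z/p^{m+1}\Z)^\times$ (obtaining $L_{m,j}$) and then invokes additivity of $\mu^\pm_{f,\alpha_i}$ to pass to level $n$, whereas you sum directly at level $N$; both routes use $m\leq n$ in the same way.
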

\begin{proof}  
We assume $p>2$; the proof for $p=2$ is similar. By definition, the wild character $\chi_\gamma$ satisfies
\begin{equation}\label{decompw}
\chi_\gamma(x)=\gamma^{\log_{\gamma}(x)}=x\omega^{-1}(x),
\end{equation}
hence $\chi_{\gamma^j\zeta_m}=\omega^{-j}\chi_{\zeta_m}x^j$. Noting that $\psi\omega^{-j}\chi_{\zeta_m}$ is constant on $1+p^{m+1}\Z_p=\gamma^{p^m\Z_p}$, we have
\begin{align*}
L_p(f,\alpha,\psi,\gamma^j\zeta_m-1) &= \int_{\Z_p^\times}\psi\omega^{-j}\chi_{\zeta_m}x^j d\mu_{f,\alpha}^{*}\\
&= \sum_{a\in (\Z/p^{m+1}\Z)^\times}\psi\omega^{-j}\chi_{\zeta_m}(a)\mu_{f,\alpha}^{*}(x^j;a+p^{m+1}\Z_p)\\
&=\sum_{a\in (\Z/p^{m+1}\Z)^\times}\psi\omega^{-j}(a)\mu_{f,\alpha}^{*}(x^j;a+p^{m+1}\Z_p)\zeta_m^{\log_\gamma(a)}\\
&= \textstyle \binom{k-2}{j}^{-1}L_{m,j}(f,\alpha,\psi,\zeta_m-1),
\end{align*}
where  $*=\sgn\psi(-1)(-1)^k$ and $\chi_{\zeta_m}(a)$ denotes the value of $\chi_{\zeta_m}$ on the coset in $\Z_p^\times/(1+p^{m+1}\Z_p)\cong(\Z/p^{m+1}\Z)^\times$ corresponding to $a$. It now follows from the additivity of $\mu_{f,\alpha}^\pm$  \cite[Prop. 2.8]{pollack03} and Lemma \ref{additive} that $L_{m,j}(f,\alpha,\psi,\zeta_m-1)=L_{n,j}(f,\alpha, \psi,\zeta_m-1)$ for all $0\leq m\leq n$.
\end{proof}

\subsection{Relation to Mazur-Tate elements}
For the remainder of \S\ref{liftssection} we assume that $a_p=0$. Thus $\alpha := \alpha_1=\sqrt{-\e(p)p^{k-1}}$ and $\alpha_2=-\alpha$. 
Define
\begin{align*}
G_{n,j}^+(f,\psi,T)&=\frac{L_{n,j}(f,\alpha_1,\psi,T)+L_{n,j}(f,\alpha_2,\psi,T)}{2}\in K[T],\\
G_{n,j}^-(f,\psi,T)&=\frac{L_{n,j}(f,\alpha_1,\psi,T)-L_{n,j}(f,\alpha_2,\psi,T)}{2\alpha_1}\in K[T]. 
\end{align*}
These functions can be thought of as polynomial analogues of the power series $G_\psi^\pm(f,T)$ from \S\ref{section_signedLp}. Note the decomposition
$$
L_{n,j}(f,\alpha_i,\psi,T)=G_{n,j}^+(f,\psi,T)+\alpha_i G_{n,j}^-(f,\psi,T).
$$
Recall our convention that $N=n+1$ for odd $p$ and $N=n+2$ if $p=2$. 

\begin{proposition} Let $*=\sgn\psi(-1)(-1)^k$. Then 
$$
G_{n,j}^+(f,\psi,T)=\begin{cases} \displaystyle\frac{1}{\alpha^{N}} \sum_{a\in (\Z/p^{N}\Z)^\times}\psi\omega^{-j}(a)\bigg[\frac{a}{p^{N}}\bigg]_{f,j}^*(1+T)^{\log_\gamma(a)} &\text{$N$ even,} \\
 \displaystyle\frac{-\e(p)p^{k-2}}{\alpha^{N+1}} \sum_{a\in (\Z/p^{N}\Z)^\times}\psi\omega^{-j}(a)\bigg[\frac{a}{p^{N-1}}\bigg]_{f,j}^*(1+T)^{\log_\gamma(a)} & \text{$N$ odd,}
\end{cases}
$$
$$
G_{n,j}^-(f,\psi,T)=\begin{cases} \displaystyle\frac{-\e(p)p^{k-2}}{\alpha^{N+2}} \sum_{a\in (\Z/p^{N}\Z)^\times}\psi\omega^{-j}(a)\bigg[\frac{a}{p^{N-1}}\bigg]_{f,j}^*(1+T)^{\log_\gamma(a)} & \text{$N$ even,}\\
\displaystyle\frac{1}{\alpha^{N+1}} \sum_{a\in (\Z/p^{N}\Z)^\times}\psi\omega^{-j}(a)\bigg[\frac{a}{p^{N}}\bigg]_{f,j}^*(1+T)^{\log_\gamma(a)} &\text{$N$ odd.}
\end{cases}
$$
\end{proposition}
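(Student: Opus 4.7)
The plan is to unwind the definition of $L_{n,j}(f,\alpha_i,\psi,T)$ by substituting Pollack's explicit formula for the distribution $\mu^{*}_{f,\alpha_i}$ on cylinder sets, convert modular symbol values to Mazur-Tate coefficients via \eqref{kjl}, and then exploit the relation $\alpha_2=-\alpha_1=-\alpha$ to see that parity of $N$ controls which terms survive in $G^\pm_{n,j}$.

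First I would recall from \cite[\S2]{Pollack03} (subject to the sign conventions of Remark \ref{compatibility}) that for an admissible root $\alpha_i$, the distribution $\mu^{*}_{f,\alpha_i}$ on the cylinder set $a+p^{N}\Z_p$ is given by
$$
\mu^{*}_{f,\alpha_i}(x^j;a+p^{N}\Z_p)=\frac{1}{\alpha_i^{N}}\lambda^*_f(z^j;a,p^{N})-\frac{\e(p)p^{k-2}}{\alpha_i^{N+1}}\lambda^*_f(z^j;a,p^{N-1}),
$$
where $*=\sgn\psi(-1)(-1)^k$. Multiplying through by $\binom{k-2}{j}$ and invoking \eqref{kjl} turns the right-hand side into a linear combination of $[a/p^{N}]_{f,j}^*$ and $[a/p^{N-1}]_{f,j}^*$, and substituting into the definition of $L_{n,j}$ yields
$$
L_{n,j}(f,\alpha_i,\psi,T)=\sum_{a\in(\Z/p^{N}\Z)^\times}\psi\omega^{-j}(a)\left(\frac{[a/p^{N}]_{f,j}^*}{\alpha_i^{N}}-\frac{\e(p)p^{k-2}[a/p^{N-1}]_{f,j}^*}{\alpha_i^{N+1}}\right)(1+T)^{\log_\gamma(a)}.
$$

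Next, since $\alpha_2=-\alpha$, the coefficient $\alpha_i^{-N}$ averages to $\tfrac{1+(-1)^N}{2}\alpha^{-N}$ in $G^+_{n,j}$ and antisymmetrizes to $\tfrac{1-(-1)^N}{2}\alpha^{-(N+1)}$ in $G^-_{n,j}$; similarly, $\alpha_i^{-(N+1)}$ contributes only when the opposite parity of $N$ is satisfied. Reading off the four cases (two signs, two parities) gives exactly the stated formulas. The main obstacle will be pinning down Pollack's formula for $\mu^{*}_{f,\alpha_i}$ with signs consistent with Remark \ref{compatibility}; once that is done, the remainder is a purely formal two-case parity computation.
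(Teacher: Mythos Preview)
Your proposal is correct and follows essentially the same route as the paper's proof: both unwind Pollack's explicit formula for $\mu^{*}_{f,\alpha_i}(x^j;a+p^{N}\Z_p)$, convert to Mazur--Tate coefficients via \eqref{kjl}, and then use $\alpha_2=-\alpha_1$ so that parity of $N$ selects which of the two terms survives in the symmetrization $G^{+}_{n,j}$ and antisymmetrization $G^{-}_{n,j}$. The paper phrases the parity step by directly computing $\mu_{f,\alpha_1}\pm\mu_{f,\alpha_2}$ term-by-term rather than via your $\tfrac{1\pm(-1)^N}{2}$ formulation, but the argument is the same.
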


\begin{proof} We argue for odd $p$ (the case $p=2$ is similar). From \eqref{kjl} and the definition of $\mu_{f,\alpha_i}^\pm$ (see \cite[\S2.2]{pollack03}), we have
$$
 \binom{k-2}{j}\mu_{f,\alpha}^\pm(x^j, a+p^{n+1}\Z_p)=\frac{1}{\alpha^{n+1}}\bigg(\bigg[\frac{a}{p^{n+1}}\bigg]_{f,j}^\pm-\frac{\e(p)p^{k-2}}{\alpha}\bigg[\frac{a}{p^{n}}\bigg]_{f,j}^\pm\bigg).
$$
Since $\alpha_1=-\alpha_2$ this implies
\begin{align*}
\textstyle \binom{k-2}{j}\displaystyle\big(\mu_{f,\alpha_1}^\pm(x^j;a+p^{n+1}\Z_p)+\mu^{\pm}_{f,\alpha_2}(x^j;a+p^{n+1}\Z_p)\big)&=
\begin{cases}  \frac{2}{\alpha^{n+1}} \big[\frac{a}{p^{n+1}}\big]_{f,j}^\pm& \text{$n$ odd,}\\
 \frac{-2\e(p)p^{k-2}}{\alpha^{n+2}}  \big[\frac{a}{p^{n}}\big]_{f,j}^\pm&\text{$n$ even,}
\end{cases}\\
\textstyle \binom{k-2}{j}\displaystyle\big(\mu_{f,\alpha_1}^\pm(x^j;a+p^{n+1}\Z_p)-\mu^{\pm}_{f,\alpha_2}(x^j;a+p^{n+1}\Z_p)\big)&=
\begin{cases}  \frac{-2\e(p)p^{k-2}}{\alpha^{n+2}}  \big[\frac{a}{p^{n}}\big]_{f,j}^\pm& \text{$n$ odd,}\\
\frac{2}{\alpha^{n+1}} \big[\frac{a}{p^{n+1}}\big]_{f,j}^\pm    &\text{$n$ even.}
\end{cases}
\end{align*}
The desired equalities now follow from the definition of $G_{n,j}^\pm(f,\psi,T)$. 
\end{proof}

\begin{corollary}\label{MT} $\theta_{n,j}^\psi(f,T)=\begin{cases} \alpha^{N}G_{n,j}^+(f,\psi\omega^j,T)&\text{$N$ even,} \\
\alpha^{N+1}G_{n,j}^-(f,\psi\omega^j,T)&\text{$N$ odd.}
\end{cases}$
\end{corollary}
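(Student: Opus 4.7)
The plan is a direct substitution argument: this corollary is almost immediate from the previous proposition, once one carefully tracks how the character and sign conventions transform under $\psi \mapsto \psi\omega^j$.

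First, I would verify the two bookkeeping identities needed for the substitution. Starting from the definition
\[
\theta_{n,j}^\psi(f,T)=\sum_{a \in (\Z/p^N\Z)^\times}\psi(a) \bigg[\frac{a}{p^N}\bigg]_{f,j}^{\sgn\psi(-1)(-1)^{k-j}} \cdot (1+T)^{\log_{\gamma}(a)},
\]
I want to match it with an expression involving $G_{n,j}^\pm(f,\psi\omega^j,T)$. The two checks are: (i) $(\psi\omega^j)\cdot \omega^{-j} = \psi$ as characters on $(\Z/2p\Z)^\times$, so the character appearing in the sum for $G_{n,j}^\pm(f,\psi\omega^j,T)$ is exactly $\psi$; and (ii) the corresponding sign becomes
\[
\sgn(\psi\omega^j)(-1)(-1)^k = \psi(-1)(-1)^j(-1)^k = \sgn\psi(-1)(-1)^{k-j},
\]
using $(-1)^{k+j}=(-1)^{k-j}$. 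Hence the ``$*$'' in the preceding proposition, after replacing $\psi$ by $\psi\omega^j$, agrees with the sign appearing in $\theta_{n,j}^\psi(f,T)$.

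Next, I would split into the two cases according to the parity of $N$. When $N$ is even, the preceding proposition gives
\[
\alpha^N G_{n,j}^+(f,\psi\omega^j,T) = \sum_{a\in(\Z/p^N\Z)^\times}\psi(a)\bigg[\frac{a}{p^N}\bigg]_{f,j}^{\sgn\psi(-1)(-1)^{k-j}}(1+T)^{\log_\gamma(a)},
\]
and the right-hand side is exactly $\theta_{n,j}^\psi(f,T)$. When $N$ is odd, the analogous formula for $G_{n,j}^-$ yields
\[
\alpha^{N+1} G_{n,j}^-(f,\psi\omega^j,T) = \sum_{a\in(\Z/p^N\Z)^\times}\psi(a)\bigg[\frac{a}{p^N}\bigg]_{f,j}^{\sgn\psi(-1)(-1)^{k-j}}(1+T)^{\log_\gamma(a)} = \theta_{n,j}^\psi(f,T).
\]
In each case we select the one of $G^\pm$ whose formula involves the ``top-level'' symbol $[a/p^N]_{f,j}^*$ rather than $[a/p^{N-1}]_{f,j}^*$.

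There is essentially no obstacle here: the proposition does all of the work, and the corollary is a matter of bookkeeping. The only mildly subtle point is the parity $(-1)^{k+j}=(-1)^{k-j}$ identification which ensures the twist by $\omega^j$ correctly converts the weight-$k$ sign convention in the definition of $L_p$ (and hence of $G^\pm$) into the weight-$(k-j)$ sign convention that appears in the definition of the Mazur--Tate elements $\theta_{n,j}^\psi(f)$.
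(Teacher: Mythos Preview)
Your proposal is correct and follows essentially the same approach as the paper: the paper's proof reads simply ``This follows from the previous proposition and the definitions,'' and your argument spells out exactly the bookkeeping the author had in mind (the twist $\psi\mapsto\psi\omega^j$ and the sign identity $\sgn(\psi\omega^j)(-1)(-1)^k=\sgn\psi(-1)(-1)^{k-j}$).
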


\begin{proof} This follows from the previous proposition and the definitions.
\end{proof}

We now determine explicit zeros of the polynomials $G_{n,j}^\pm(f,\psi,T)$ by relating them to the power series $G_\psi^\pm(f,T)$ of \S\ref{section_Lpdef}.

\begin{proposition}\label{Gnroots} 
For all $0\leq m \leq n$ we have
$$
\textstyle \binom{k-2}{j}\displaystyle G_\psi^\pm(f,\gamma^j\zeta_m-1)=G_{n,j}^\pm(f,\psi,\zeta_m-1).
$$
 In particular, $G_{n,j}^\pm(f,\psi,0)=\textstyle \binom{k-2}{j}G_\psi^\pm(f,\gamma^j-1)$ and $G_{n,j}^{\pm}(f,\psi,\zeta_m-1)=0$ for all $1\leq m\leq n$ with $\e_m=\star\pm$.
\end{proposition}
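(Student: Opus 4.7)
The plan is to leverage Proposition \ref{LpToLn} together with the two parallel $\pm$-decompositions. Since $a_p=0$, the roots satisfy $\alpha_2 = -\alpha_1$, so we have
\begin{align*}
L_p(f,\alpha_1,\psi,T) &= G_\psi^+(f,T) + \alpha_1 G_\psi^-(f,T),\\
L_p(f,\alpha_2,\psi,T) &= G_\psi^+(f,T) - \alpha_1 G_\psi^-(f,T),
\end{align*}
and by construction of the $G_{n,j}^\pm$ in \S\ref{liftssection},
\begin{align*}
L_{n,j}(f,\alpha_1,\psi,T) &= G_{n,j}^+(f,\psi,T) + \alpha_1 G_{n,j}^-(f,\psi,T),\\
L_{n,j}(f,\alpha_2,\psi,T) &= G_{n,j}^+(f,\psi,T) - \alpha_1 G_{n,j}^-(f,\psi,T).
\end{align*}

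The first step is to apply Proposition \ref{LpToLn} at $T=\gamma^j\zeta_m-1$ for each of $\alpha_1$ and $\alpha_2$, substitute in the above decompositions, and then take the half-sum and half-difference (dividing the latter by $\alpha_1$). Because the $G_\psi^\pm$ and $G_{n,j}^\pm$ were defined precisely to extract these even/odd-in-$\alpha$ pieces, the $\pm$-parts decouple cleanly and yield the identity
$$
\textstyle \binom{k-2}{j}G_\psi^\pm(f,\gamma^j\zeta_m-1)=G_{n,j}^\pm(f,\psi,\zeta_m-1)
$$
for every $0\leq m\leq n$. This is the central identity of the proposition, and it is really just an algebraic consequence of the matching shape of the two decompositions.

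Specializing to $m=0$ (so $\zeta_0=1$) immediately produces the stated formula for $G_{n,j}^\pm(f,\psi,0)$. For the vanishing claim, I would invoke the known vanishing properties of Pollack's signed half-$p$-adic $L$-functions: by \cite[Theorem 3.5]{Pollack03}, $G_\psi^\pm(f,T)$ vanishes at every primitive $p^m$th root of unity $\zeta_m-1$ (and hence at $\gamma^j\zeta_m-1$, since $\gamma^j\zeta_m$ differs from $\zeta_m$ only by a unit factor coming from the wild character, and the construction of $G_\psi^\pm$ factors this out) whose parity of $m$ matches $\star\pm$. Substituting this vanishing into the identity above forces $G_{n,j}^\pm(f,\psi,\zeta_m-1)=0$ for those $m$.

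The only real subtlety is the sign bookkeeping in the vanishing step: one has to confirm that the parity convention in Pollack's construction of $G_\psi^\pm$ (which treats $p=2$ and $p>2$ differently, hence the $\star$) matches the convention $\epsilon_m=\star\pm$ appearing in the statement. This reduces to tracing the sign $\star$ through Remark \ref{compatibility}, where our modification of the distribution $\mu_{f,\alpha}^\pm$ already accounts for the weight $k$ in the sign. Once this alignment is checked, the vanishing claim follows directly. I expect this sign-tracking to be the main potential source of error, but it is essentially mechanical rather than conceptual.
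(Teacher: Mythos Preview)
Your approach is essentially identical to the paper's: the main identity is obtained exactly as you describe, by applying Proposition \ref{LpToLn} to both $\alpha_1$ and $\alpha_2$ and then taking half-sum and half-difference.

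The one place where your argument wobbles is the justification for the vanishing $G_\psi^\pm(f,\gamma^j\zeta_m-1)=0$. Your claim that this follows from vanishing at $\zeta_m-1$ because ``$\gamma^j\zeta_m$ differs from $\zeta_m$ only by a unit factor\dots and the construction of $G_\psi^\pm$ factors this out'' is not correct reasoning: $G_\psi^\pm$ is simply a power series in $T$, and there is no mechanism by which vanishing at $\zeta_m-1$ automatically propagates to the distinct point $\gamma^j\zeta_m-1$. The paper instead invokes directly (citing \cite[pg.~541]{Pollack03}) that $G_\psi^\pm(f,\gamma^j\zeta_m-1)=0$ for \emph{all} $0\leq j\leq k-2$ and $m\geq 1$ with $\e_m=\star\pm$. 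The underlying reason is that $G_\psi^\pm=L_{p,\psi}^\pm\cdot\log_k^\pm$, and the half-logarithm $\log_k^\pm$ is by definition a product $\prod_{j=0}^{k-2}\prod_m\Phi_m(\gamma^{-j}(1+T))/p$, so the factor $\Phi_m(\gamma^{-j}(1+T))$ vanishes at $T=\gamma^j\zeta_m-1$. Replace your ``unit factor'' sentence with this, and the proof is complete and matches the paper exactly.
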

\begin{proof} Using the definitions and Proposition \ref{LpToLn} we have 
\begin{align*}
\textstyle \binom{k-2}{j}\displaystyle G_\psi^+(f,\gamma^j\zeta_m-1)&=\binom{k-2}{j}\bigg( \frac{L_p(f,\alpha_1,\psi,\gamma^j\zeta_m-1)+L_p(f,\alpha_2,\psi,\gamma^j\zeta_m-1)}{2}\bigg)\\
&=\frac{L_{n,j}(f,\alpha_1,\psi,\zeta_m-1)+L_{n,j}(f,\alpha_2,\psi,\zeta_m-1)}{2}\\
&=G_{n,j}^+(f,\psi,\zeta_m-1),
\end{align*}
with an analogous computation for $G_\psi^-$. 
For the final statement, note that Theorem \ref{theorem_pollackdecomp} and \eqref{def_plus/minus} imply $G_\psi^\pm(f,T)\in \log_{k}^{\star \pm}(T)\Lambda_K$. Since $\log_{k}^{\star \pm}(T)$ has zeroes at $T=\gamma^j\zeta_{m}-1$ for all $0\leq j\leq k-2$ and $m\geq 1$ with $\e_m=\star\pm$, the result now follows. 
\end{proof}

\subsection{Construction of lifts}\label{section_lifts}
Define the polynomials
$$
\omega_{n,j}^\pm =\displaystyle\prod_{\substack{i=1\\ \e_i=\pm}}^n\Phi_i(\gamma^{-j}(1+T)),\qquad \log_{k,n}^\pm = \prod_{j=0}^{k-2}\omega_{n,j}^{\pm}\in \Z_p[T]. 
$$
Notice that we can recover the half-logarithms $\log_{k}^\pm$ of \S\ref{section_signedLp} as limits of $\log_{k,n}^\pm$: as functions converging on the open unit disc we have 
$$
\lim_{n\rightarrow \infty} p^{-(k-1)(1+\delta_n^\pm)}\log_{k,n}^\pm=\log_k^\pm,
$$
where $\delta_n^+=\lfloor n/2\rfloor$ and $\delta_n^-=\lfloor (n+1)/2\rfloor$.  We call the functions $\log_{k,n}^\pm$ \emph{half-logarithms at layer $n$}. To analyze the tail-end of $\log_k^\pm$, we also define
$$
u_{n,j}^\pm(T)= \prod_{\substack{i=n+1\\ \e_i=\pm}}^\infty\frac{\Phi_{i}\big(\gamma^{-j}(1+T)\big)}{p},\qquad v_{k,n}^\pm(T) =\prod_{j=0}^{k-2}u_{n,j}^\pm(T)\in \Q_p[[T]].
$$
and note that
\begin{equation}\label{logpm}
 \log_{k}^\pm =p^{-(k-1)(1+\delta_n^\pm)} \log_{k,n}^\pm  v_{k,n}^\pm.
 \end{equation}
In particular, $\log_{k,n}^\pm$ divides $\log_{k}^\pm$ in $\Q_p[[T]]$ and since the latter converge on the open $\C_p$-unit disc (see \cite[Corollary 4.2]{pollack03}), the same is true of $v_{k,n}^\pm$. 

\begin{lemma}\label{units} For any $1\leq m\leq n$ and $j\in \Z$ we have 
$$
v^\pm_{k,n}(\gamma^j\zeta_m-1)=v^\pm_{k,n}(\gamma^{j}-1)\in \Z_p^\times.
$$
\end{lemma}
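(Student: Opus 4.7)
The plan is to substitute directly into the defining product for $v_{k,n}^\pm$ and exploit the fact that every cyclotomic factor has index strictly larger than $m$. Writing
$$
v_{k,n}^\pm(T)=\prod_{\ell=0}^{k-2}\prod_{\substack{i=n+1\\ \e_i=\pm}}^\infty \frac{\Phi_i(\gamma^{-\ell}(1+T))}{p},
$$
the argument of each $\Phi_i$ becomes $\gamma^{j-\ell}\zeta_m$ at $T=\gamma^j\zeta_m-1$ and $\gamma^{j-\ell}$ at $T=\gamma^j-1$. Since $\Phi_i(X)=\sum_{a=0}^{p-1} X^{ap^{i-1}}$ and $\zeta_m^{p^{i-1}}=1$ whenever $i-1\geq m$ (automatic in our range, where $i\geq n+1>m$), the two substitutions produce term-wise the same sum. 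Hence the equality $v_{k,n}^\pm(\gamma^j\zeta_m-1)=v_{k,n}^\pm(\gamma^j-1)$ holds factor by factor.

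For the unit assertion, I would verify that each factor $\Phi_i(\gamma^{j-\ell})/p$ lies in $\Z_p^\times$. When $\ell=j$ this is immediate: $\Phi_i(1)=p$. Otherwise $\gamma^{j-\ell}$ is a nontrivial $1$-unit, and rewriting
$$
\Phi_i(\gamma^{j-\ell})=\frac{\gamma^{p^i(j-\ell)}-1}{\gamma^{p^{i-1}(j-\ell)}-1},
$$
I would apply the standard valuation formula $\ord_p(\gamma^{p^s x}-1)=s+\ord_p(x)+\ord_p(\gamma-1)$ for nonzero $x\in \Z_p$ (with $\ord_p(\gamma-1)\geq 1$ for odd $p$ and $\geq 2$ for $p=2$, since $\gamma\in 1+2p\Z_p$). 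The numerator and denominator then have valuations differing by exactly $1$, so $\ord_p(\Phi_i(\gamma^{j-\ell})/p)=0$ and the factor is a unit. The same estimate shows $\Phi_i(\gamma^{j-\ell})/p\to 1$ as $i\to\infty$ at an explicit rate, so the infinite product converges to an element of $\Z_p^\times$.

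There is no real obstacle: the whole lemma is a corollary of the identity $\zeta_m^{p^{i-1}}=1$ for $i-1\geq m$, which erases the $\zeta_m$-twist throughout the tail of the product. The only bookkeeping point is accommodating the $p=2$ case, where the $\star$ convention swaps the parity indices $\e_i=\pm$; this affects which factors appear but not the valuation estimates.
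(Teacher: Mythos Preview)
Your proof is correct and follows essentially the same approach as the paper: the equality comes from $\Phi_i(x\zeta_m)=\Phi_i(x)$ for $i\geq m+1$, and the unit assertion from $\ord_p\Phi_i(u)=1$ for any $1$-unit $u$. The only difference is that the paper obtains the latter valuation by citing a later lemma (which expands $\Phi_i(u)=\sum_{t=0}^{p-1}u^{tp^{i-1}}\equiv p\pmod{2p^i}$ directly), whereas you use the quotient form $\Phi_i(u)=(u^{p^i}-1)/(u^{p^{i-1}}-1)$ together with $\ord_p(\gamma^y-1)=\ord_p(y)+\ord_p(\gamma-1)$; both are equally elementary, and your version has the small advantage of being self-contained and of making the convergence of the infinite product explicit. (Your aside about the $\star$ convention is unnecessary here, since the definition of $v_{k,n}^\pm$ does not involve $\star$.)
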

\begin{proof} The equality follows directly from the fact that $\Phi_i(x\zeta_m)=\Phi_i(x)$ for all $i\geq m+1$. To show these values are units, note that  for any $x\in 1+2p\Z_p$ and $i\geq 1$ we have 
$\ord_p \Phi_i(x)=1$ by Lemma \ref{cyclo-j}. Thus $u_{n,j'}^\pm(\gamma^j-1)$ are units for any $j,j'\in \Z$, and hence $v_{k,n}^\pm(\gamma^j-1)$ must also be units.
\end{proof}

We are now ready to prove Theorem \ref{lifts1} (which is obtained by taking $p>2$ and $j=0$ in the following theorem). 
 Recall the map
$\eta^j:\Lambda_K\rightarrow\Lambda_K$ of \S\ref{section_iwinv}, defined by $F(T)\mapsto F\big(\gamma^j(1+T)-1\big).$

\begin{theorem} \label{lifts} If $a_p=0$ then there are explicit units $u_{n,j}\in \Z_p[\e(p)]^\times$ such that for $n\geq 0$, 
$$
\theta_{n,j}^\psi \equiv u_{n,j}C^{\e_n}\textstyle\binom{k-2}{j}\displaystyle \eta^{j}\big(L_{p,{\psi\omega^{j}}}^{\star \e_{n+1}}\log_{k,n}^{\e_{n+1}}\big) \Mod \omega_n
$$
where
\begin{equation}\label{Cconst}
C^\pm=\begin{cases}2^{k-1}&\text{if $p=2$ and $\pm=-$,}\\
1&\text{otherwise.} 
\end{cases}
\end{equation}
\end{theorem}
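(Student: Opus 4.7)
The plan is to verify the congruence pointwise at every root of $\omega_n$ and then conclude via the Chinese Remainder Theorem. Since $\omega_n=\prod_{m=0}^{n}\Phi_{p^m}(1+T)$, the evaluations $T\mapsto\zeta_m-1$ induce a ring isomorphism $\Lambda_{K,n}\cong\prod_{m=0}^{n}K(\zeta_m)$, so it suffices to show that both sides of the claimed congruence agree at $T=\zeta_m-1$ for each $0\leq m\leq n$.

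The first step is to apply Corollary \ref{MT} to rewrite the left side as $\alpha^{N_j}G_{n,j}^{\e_N}(f,\psi\omega^j,T)$, where $N_j\in\{N,N+1\}$ is the even exponent selected by the parity of $N$. Evaluating at $T=\zeta_m-1$ and invoking Proposition \ref{Gnroots} converts this into $\alpha^{N_j}\binom{k-2}{j}G_{\psi\omega^j}^{\e_N}(f,\gamma^j\zeta_m-1)$, with both sides vanishing simultaneously when $\e_m=\star\e_N$. I would then substitute Pollack's signed decomposition, expressing $G_{\psi\omega^j}^{\e_N}(f,T)$ as $L_{p,\psi\omega^j}^{\e_N}(f,T)\cdot\log_k^{\star\e_N}(T)$, and use \eqref{logpm} to split $\log_k^{\star\e_N}$ as $p^{-(k-1)(1+\delta_n^{\star\e_N})}\log_{k,n}^{\star\e_N}v_{k,n}^{\star\e_N}$. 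The elementary identity $\e_N=\star\e_{n+1}$ (immediate from a parity check on $n$ and the definition of $N$) gives $\star\e_N=\e_{n+1}$, converting the log factor to $\log_{k,n}^{\e_{n+1}}$ as in the theorem; Lemma \ref{units} then reduces $v_{k,n}^{\e_{n+1}}(\gamma^j\zeta_m-1)$ to the constant unit $v_{k,n}^{\e_{n+1}}(\gamma^j-1)\in\Z_p^\times$, independent of $m$.

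Matching the residual scalar reduces to verifying the identity
$$
u_{n,j}C^{\e_n}\;=\;\alpha^{N_j}\,p^{-(k-1)(1+\delta_n^{\e_{n+1}})}\,v_{k,n}^{\e_{n+1}}(\gamma^j-1).
$$
A short parity-based valuation check (using $\alpha^2=-\e(p)p^{k-1}$) shows that the $p$-part of $\alpha^{N_j}\cdot p^{-(k-1)(1+\delta_n^{\e_{n+1}})}$ is trivial in every case except $p=2$ with $n$ odd, where it equals exactly $2^{k-1}$---precisely the constant $C^-$. Setting $u_{n,j}=(-\e(p))^{N_j/2}v_{k,n}^{\e_{n+1}}(\gamma^j-1)\in\Z_p[\e(p)]^\times$ and absorbing the stray $2^{k-1}$ into $C^{\e_n}$ completes the match.

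The main obstacle is the sign bookkeeping among $\e_N$, $\e_{n+1}$, and $\star\e_{n+1}$: these agree when $p$ is odd, but diverge for $p=2$ because of the shift $N=n+2$. It is precisely this divergence---together with the $\star$-twist appearing in Pollack's signed decomposition at $p=2$---that forces the extra factor $C^-=2^{k-1}$ when $n$ is odd, and accounts for its triviality otherwise.
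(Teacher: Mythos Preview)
Your proof is correct and follows essentially the same route as the paper: both arguments use Corollary~\ref{MT} and Proposition~\ref{Gnroots} to relate $\theta_{n,j}^\psi$ to $G_{\psi\omega^j}^{\e_N}$, invoke Pollack's decomposition $G_\psi^{\e_N}=L_{p,\psi}^{\e_N}\log_k^{\star\e_N}$, split off $\log_{k,n}^{\e_{n+1}}$ via \eqref{logpm}, and reduce the tail $v_{k,n}^{\e_{n+1}}(\gamma^j\zeta_m-1)$ to a constant unit using Lemma~\ref{units}. The only organizational difference is that the paper first factors the common zero set $\omega_n^{\star\e_N}$ out of both sides (introducing auxiliary polynomials $H_{n,j}^\pm$) and then checks the quotients at the complementary roots $T\omega_n^{-\star\e_N}$, whereas you check directly at every root of $\omega_n$ and note the simultaneous vanishing; these are equivalent verifications of the same congruence, and your scalar bookkeeping and definition of $u_{n,j}$ match the paper's exactly.
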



\begin{proof} 
Define the units $u_{n,j} \in \Z_p[\e(p)]^\times$ by
$$
u_{n,j}=\begin{cases}(-\e(p))^{\frac{N}{2}}v^{\star}_{k,n}(\gamma^{j}-1)&\quad\text{$N$ even,} \\
(-\e(p))^{\frac{N+1}{2}}v^{-\star}_{k,n}(\gamma^{j}-1)&\quad\text{$N$ odd.}
\end{cases}
$$
(We know these are units by Lemma \ref{units}.) The roots of $\omega_n^\pm :=\omega_{n,j=0}^\pm$ are $\zeta_m-1$ for $1\leq m\leq n$ with $\e_m=\pm$, hence Proposition \ref{Gnroots} implies that we can find $H_{n,j}^\pm(\psi,T)\in K[T]$ such that for $*\in \{+,-\}$, 
$$
G_{n,j}^{*}(\psi,T)=\omega_n^{\star*}(T) H_{n,j}^{*}(\psi,T).
$$
Thus,
\begin{equation}\label{MTH}
\theta_{n,j}^{\psi}(f,T)=\begin{cases} \alpha^{N}\omega_n^{\star}(T)H_{n,j}^{+}(\psi\omega^j,T)&\text{$N$ even,} \\
\alpha^{N+1}\omega_n^{-\star}(T)H_{n,j}^{-}(\psi\omega^j,T)&\text{$N$ odd,}
\end{cases}
\end{equation}
by Corollary \ref{MT}. Now note that $\eta^j(\omega_{n,j}^\pm)=\omega_{n}^\pm$,  so
\begin{equation}\label{eta1}
\eta^j(\log_{k,n}^\pm)=\omega_n^\pm\cdot \eta^j\bigg(\prod_{\substack{j'=0\\ j'\neq j}}^{k-2}\omega_{n,j'}^\pm\bigg).
\end{equation}
 Since $\omega_n=(1+T)^{p^n}-1=T\omega_n^+\omega_n^-$, the result will follow from equations \eqref{MTH} and \eqref{eta1} if we can show the following for any tame character $\psi$:
 \begin{align}
\alpha^{N}H_{n,j}^{+}(\psi,T) &\equiv  u_{n,j}\textstyle \binom{k-2}{j}\displaystyle \eta^j\bigg(L_{p,\psi}^{+}\prod_{\substack{j'=0\\ j'\neq j}}^{k-2}\omega_{n,j'}^{\star}\bigg)\Mod T\omega_n^{-\star}, &\text{(for $N$ even)} \label{H+} \\ 
\alpha^{N+1}H_{n,j}^{-}(\psi,T)&\equiv u_{n,j}C^{-} \textstyle \binom{k-2}{j}\displaystyle\eta^j\bigg(L_{p,\psi}^{-}\prod_{\substack{j'=0\\ j'\neq j}}^{k-2}\omega_{n,j'}^{-\star}\bigg)\Mod T\omega_n^{\star}, &\text{(for $N$ odd)}\label{H-}.
\end{align}

To prove these congruences, we need only check that the left and right sides agree at all roots of the modulus $T\omega_n^\pm$, i.e., at the values $T=0$ and $T=\zeta_m-1$ for all $1\leq m\leq n$ with $\e_m=\pm$. We prove \eqref{H-}; the proof of \eqref{H+} is similar. Suppose $N$ is odd (thus $n$ is even if $p>2$, or $n$ is odd if $p=2$). Let $m=0$ or take $1\leq m\leq n$ with $\e_m=\star$. Then
\begin{align*}
\alpha^{N+1}\textstyle \binom{k-2}{j}\displaystyle^{-1}H_{n,j}^-(\psi,\zeta_m-1)&= \alpha^{N+1}\textstyle \binom{k-2}{j}\displaystyle^{-1}\omega_{n}^{-\star}(\zeta_m-1)^{-1}G_{n,j}^-(\psi, \zeta_m-1)  \\
&= \alpha^{N+1}\omega_{n}^{-\star}(\zeta_m-1)^{-1}G_{\psi}^-(\gamma^j\zeta_m-1)\\
&= \alpha^{N+1}\omega_{n}^{-\star}(\zeta_m-1)^{-1}L_{p,\psi}^-(\gamma^j\zeta_m-1) \log_{k}^{-\star}(\gamma^j\zeta_m-1)\\
&= \alpha^{N+1}\omega_{n}^{-\star}(\zeta_m-1)^{-1}\eta^{j}\big(L_{p,\psi}^- \log_{k}^{-\star}\big)(\zeta_m-1) \\
&= \alpha^{N+1}p^{-(k-1)(1+\delta_n^{-\star})}\omega_{n}^{-\star}(\zeta_m-1)^{-1}\eta^{j}\bigg(L_{p,\psi}^-\log_{k,n}^{-\star}v_{k,n}^{-\star}\bigg)(\zeta_m-1)  \\
&= (-\e(p))^{\frac{N+1}{2}}C^{-}v_{k,n}^{-\star}(\gamma^j\zeta_m-1)\eta^{j}\bigg(L_{p,\psi}^-\textstyle\prod_{j'=0,j'\neq j}^{k-2}\omega_{n,j'}^{-\star}\bigg)(\zeta_m-1)\\
&=u_{n,j}C^{-}\eta^{j}\bigg(L_{p,\psi}^-\textstyle\prod_{j'=0,j'\neq j}^{k-2}\omega_{n,j'}^{-\star}\bigg)(\zeta_m-1)
\end{align*}
where the first and third equalities are by definition, 
the second is from Proposition \ref{Gnroots},
the fourth is from \eqref{eta},
the fifth is from \eqref{logpm},
the sixth is because $\alpha=\sqrt{-\e(p)p^{k-1}}$, 
and the final equation follows from Lemma \ref{units} and the definition of $u_{n,j}$. 
\end{proof}

\subsection{Proof of Theorem \ref{main1} at small weights}
Recall the sequence $q_n$ defined by 
$$
q_n=\begin{cases}p^{n-1}-p^{n-2}+\cdots +p-1 & \quad\text{if $n\geq 2$ is even,}\\
p^{n-1}-p^{n-2}+\cdots +p^2-p & \quad\text{if $n\geq 3$ is odd}
\end{cases}
$$
and $q_0=q_1=0$.

\begin{lemma}\label{omeganinv} For all $n\geq 1$ we have $\mu(\log_{k,n}^{\pm})=0$ and $\lambda(\log_{k,n}^{\e_{n+1}})=(k-1)q_n$. 
\end{lemma} 
\begin{proof} Since $\eta^i\big(\Phi_n(\gamma^{-i}(1+T))\big)=\Phi_n(1+T)$, Lemma \ref{etainv} implies that the Iwasawa invariants of $\Phi_n(\gamma^{-i}(1+T))$ agree with those of $\Phi_n(1+T)$. But $\Phi_{n}(1+T)$ has $\lambda$-invariant $p^n-p^{n-1}$ for $n\geq 1$ and trivial $\mu$-invariant, so $\lambda(\omega_{n,i}^{\e_{n+1}})=q_n$ and $\mu(\omega_{n,i}^{\e_{n+1}})=0$. The result follows. 
\end{proof}

 Let $\lambda^\pm(f,\psi)$ and $\mu^\pm(f,\psi)$ denote the Iwasawa invariants of $L_{p,\psi}^\pm(f)$. 
We can now prove Theorem \ref{main1} when $\leq p+1$ and when $k=p+2$ under additional assumptions on $\lambda^\pm(f,\psi)$. We refer the reader to Appendix \ref{tablessection} for examples illustrating this theorem.

\begin{theorem}\label{theorem_smallweights} 
Let $f\in S_k(\Gamma_1(N))$ be a  newform with $a_p(f)=0$. Let $\psi\in \hat\Delta$, $j\in \{0,\dots, k-2\}$, and let $C^\pm$ be as in Theorem \ref{lifts}. 
\begin{enumerate}
\item Suppose that $2\leq k\leq p+1$. Let $n_0^-$ (resp., $n_0^+$) be the smallest positive even (resp., odd) integer for which $\lambda^{-\star}(f,\psi\omega^j)<p^n-(k-1)q_n$ (resp., $\lambda^{+\star}(f,\psi\omega^j)<p^n-(k-1)q_n$). Then for all even $n\geq n_0^-$ (resp., all odd $n\geq n_0^{+}$) we have 
\begin{align*}
\mu\big(\theta_{n,j}^\psi(f)\big)&=\mu^{\star*}(f,\psi\omega^j)+\ord_p C^{*},\\
\lambda\big(\theta_{n,j}^\psi(f)\big) &=\lambda^{\star*}(f,\psi\omega^j)+(k-1)q_n,
\end{align*}
where $*=-$ if $n$ is even and $*=+$ if $n$ is odd. 
\item Suppose that $k=p+2$, $\lambda^{-\star}(f,\psi\omega^j)=0$, and $\lambda^{+\star}(f,\psi\omega^j)<p$. Then for all $n\geq 1 $ we have
\begin{align*}
\mu\big(\theta_{n,j}^\psi(f)\big)&=\mu^{\star*}(f,\psi\omega^j)+\ord_p\bigg(C^{*}\binom{p}{j}\bigg)\\
\lambda\big(\theta_{n,j}^\psi(f)\big)  &=\lambda^{\star*}(f,\psi\omega^j)+(k-1)q_n,
\end{align*}
where $*=-$ if $n$ is even and $*=+$ if $n$ is odd. 
\end{enumerate}
\end{theorem} 
\begin{proof} By Lemmas \ref{lem_basicIw}, \ref{etainv}, and \ref{omeganinv} we know that $C^{\e_n}\binom{k-2}{j}\eta^j(L_{p,\psi\omega^j}^{\star\e_{n+1}}{\log_{k,n}^{\e_{n+1}}})$ has $\lambda$-invariant equal to $(k-1)q_n+\lambda^{\star\e_{n+1}}(f,\psi\omega^j)$ and $\mu$-invariant equal to $\mu^{\star\e_{n+1}}(f,\psi\omega^j)$. For part (1), we have $(k-1)q_n+\lambda^{\star\e_{n+1}}(f,\psi\omega^j)<p^n$ for $n\geq n_0^{\e_{n+1}}$ by assumption, hence the statement follows from combining Lemma \ref{small_lam} and Theorem \ref{lifts}. For part (2), note that for all $n\geq 1$ we have $(k-1)q_n=(p+1)q_n=p^n-1$ or $p^n-p$, depending on whether $n$ is even or odd respectively, and the statement now follows using the same argument as part (1). 
\end{proof}

When $k>p+2$ (or when $k=p+2$ and $\lambda^\pm(f,\psi\omega^j)$ do not satisfy the bounds in part (2) of the theorem), the $\lambda$-invariants of the lifts in Theorem \ref{lifts} are no longer bounded above by $p^n$, and therefore we cannot use Lemma \ref{small_lam} to relate the invariants of the lifts to those of the Mazur-Tate elements as in the proof above. 
We consider this problem in the following section.

\section{Finite-layer Iwasawa invariants}\label{section_finite-layer}

 The goal of this section is to prove Theorem \ref{mainpl1}  and provide a condition on the Newton polygon that will allow us to check whether a function satisfies the `$p$-large' assumption of this theorem (see Definition \ref{pldef}). The Newton polygon condition will ultimately be used to verify that the functions $L_p^\pm\log_{k,n}^\pm$ are $p$-large at all sufficiently large $n$, a fact which appears difficult to check directly from the definitions. Because some of the proofs in this section are already quite long, we have relegated several lemmas to Appendix \ref{appendix_lemmas}.

In this section we let $K$ denote an arbitrary finite extension of $\Q_p$ with valuation ring $\Oo$, uniformizer $\varpi$, and ramification index $e$.

\subsection{$p$-large functions}

Fix $n\geq 1$. For an integer $N\geq 0$, let
 \begin{align*}
 R(N)_n&=N\Mod(p^n-p^{n-1}),\quad \text{and}\\
Q(N)_n&=\bigg\lfloor {\frac{N}{p^n-p^{n-1}}}\bigg\rfloor,
 \end{align*}
denote the remainder and quotient of $N$ upon division by $p^n-p^{n-1}$. Thus, we can uniquely write 
$$
N=R(N)_n+Q(N)_n(p^n-p^{n-1}).
$$

\begin{definition}\label{pldef}\nf  For $F=\sum a_iT^i\in \Lambda_K$ with Iwasawa invariants $\lambda$ and $\mu$, define 
\begin{align*}
R(F)_n& = R(\lambda-p^n)_n,\\
Q(F)_n&= Q(\lambda-p^n)_n,\\
v_i(F,n) &= \mu+ \begin{cases} Q(F)_n+1+\frac{1}{e}& \text{if $0\leq i\leq R(F)_n+p^{n-1},$}\\ 
  Q(F)_n+1& \text{if $R(F)_n+p^{n-1}+1\leq i\leq  p^n-1,$}\\ 
Q(F)_n-Q(i-p^n)_n+\frac{1}{e} & \text{if $p^n \leq i\leq \lambda-1$ and $ R(i-p^n)_n\leq R(F)_n,$}\\
Q(F)_n-Q(i-p^n)_n & \text{if $p^n \leq i\leq \lambda-1$ and $ R(i-p^n)_n>R(F)_n.$}
\end{cases}
\end{align*}
\noindent We say that $F$ is \emph{$p$-large at $n$} if $\lambda \geq p^n$ and $\ord_pa_i\geq v_i(F,n)$ for all $0\leq i\leq \lambda-1$.
\end{definition}

\noindent Since $n$ is fixed, we will usually omit the subscript $n$ and simply write $R(N), Q(N), R(F), Q(F),$ and $v_i(F)$, for the obvious counterparts at $n$. 

\begin{example}\nf  $F=T^{p^n}$ is $p$-large at $n$, while $F=T^{p^n}+p$ is not. 
\end{example}

\begin{definition}\label{def_cN}\nf For each integer $i\leq p^n-1$, define the sequence $(c_{N,n}^{(i)})_{N\geq 0}$ recursively by
\begin{align}
c_{0,n}^{(i)}&=\begin{cases}-\displaystyle\binom{p^n}{i}& \text{if $1\leq i\leq p^n-1,$}\\ 
0 & \text{if $i\leq 0$,}
\end{cases}\\ \label{recursion}
c_{N,n}^{(i)}&=c_{N-1,n}^{(i-1)}+c_{N-1,n}^{(p^n-1)}c_{0,n}^{(i)},\quad \text{for $N\geq1$.}
\end{align}
\end{definition}
\noindent Since $n$ is fixed, we usually write $c_{N}^{(i)}=c_{N,n}^{(i)}$. Note that the recursion relation implies $c_N^{(i)} = 0$ for any $N$ and $i\leq 0$.

\begin{proposition}\label{lbounds} For all $N\geq 0$ we  have
\begin{align}\label{lb1}
\ord_p c_N^{(i)}&\begin{cases}\geq Q(N)+2& \text{if $0\leq i\leq R(N)+ p^{n-1}-1$}\\ 
= Q(N)+1 & \text{if $i=R(N)+p^{n-1}$}\\
\geq Q(N)+1& \text{if $R(N)+p^{n-1}+1\leq i\leq p^n-1$. }
\end{cases}
\end{align}

\end{proposition}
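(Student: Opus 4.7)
The plan is to prove this by induction on $N$, using the recursion $c_N^{(i)} = c_{N-1}^{(i-1)} + c_{N-1}^{(p^n-1)} c_0^{(i)}$. The base case $N = 0$ is immediate from $c_0^{(i)} = -\binom{p^n}{i}$ and the identity $\ord_p \binom{p^n}{i} = n - \ord_p(i)$: these yield $\ord c_0^{(i)} \geq 2$ for $1 \leq i \leq p^{n-1} - 1$, $\ord c_0^{(p^{n-1})} = 1$, and $\ord c_0^{(i)} \geq 1$ for $p^{n-1} < i \leq p^n - 1$, in agreement with the proposition at $Q(0) = R(0) = 0$.

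For the inductive step I will split into two cases based on whether $R(N) \geq 1$ or $R(N) = 0$. When $R(N) \geq 1$, we have $Q(N) = Q(N-1)$ and $R(N) = R(N-1) + 1$, so the special index shifts by one and the IH transfers through the recursion cleanly: $c_{N-1}^{(i-1)}$ provides the appropriate bound from the corresponding IH range at $N - 1$, while $c_{N-1}^{(p^n-1)} c_0^{(i)}$ gains an extra factor of $p$ from $\ord c_0^{(i)} \geq 1$ combined with $\ord c_{N-1}^{(p^n-1)} \geq Q(N) + 1$. The equality at $i = R(N) + p^{n-1}$ follows because the first term realizes $\ord = Q(N) + 1$ exactly (by the IH special-index equality at $N - 1$), while the second term has strictly larger valuation.

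The case $R(N) = 0$, where $q := Q(N) = Q(N-1) + 1$ and $R(N-1) = p^n - p^{n-1} - 1$, will be the main obstacle. For $i \geq p^{n-1}$ the analysis parallels the previous case; in particular the special-index equality at the new location $i = p^{n-1}$ arises because $c_{N-1}^{(p^n-1)} c_0^{(p^{n-1})}$ has valuation exactly $q + 1$ (using the IH's special-index bound $\ord c_{N-1}^{(p^n-1)} = q$ and $\ord c_0^{(p^{n-1})} = 1$). The difficulty is that for $1 \leq i \leq p^{n-1}$, the IH only yields $\ord c_{N-1}^{(i-1)} \geq q + 1$, whereas the required bound on $c_N^{(i)}$ demands $\geq q + 2$.

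To bridge this gap I will expand $c_{N-1}^{(i-1)}$ via Lemma \ref{basiclem}(3):
\[
c_{N-1}^{(i-1)} = \sum_{l=0}^{i-2} c_{N-2-l}^{(p^n-1)} c_0^{(i-1-l)}.
\]
For each $l \in [0, i-2]$, the index $M := N - 2 - l$ lies in $[N - p^{n-1} + 1, N - 2]$, which sits strictly between $(q-1)(p^n - p^{n-1})$ and $q(p^n - p^{n-1})$; hence $Q(M) = q - 1$ and the IH gives $\ord c_M^{(p^n-1)} \geq q$. Simultaneously, $i - 1 - l \in [1, p^{n-1} - 2]$ forces $\ord c_0^{(i-1-l)} \geq 2$. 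Each summand therefore has valuation $\geq q + 2$, yielding $\ord c_{N-1}^{(i-1)} \geq q + 2$; the case $i = 1$ is trivial since $c_{N-1}^{(0)} = 0$. The third range $p^{n-1} + 1 \leq i \leq p^n - 1$ then follows directly from the IH applied to $c_{N-1}^{(i-1)}$ together with $\ord c_{N-1}^{(p^n-1)} c_0^{(i)} \geq q + 1$. This auxiliary expansion via Lemma \ref{basiclem}(3) is the key technical step.
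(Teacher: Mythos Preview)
Your proof is correct and follows essentially the same approach as the paper. Both arguments proceed by induction via the recursion \eqref{recursion}, split into an ``easy'' step (your $R(N)\geq 1$; the paper's Claim~1, moving within a block $S_M$) and a ``hard'' step at a block boundary (your $R(N)=0$; the paper's Claim~2), and both resolve the hard step by expanding $c_{N-1}^{(i-1)}$ via Lemma~\ref{basiclem}(3) and combining the IH bound $\ord_p c_M^{(p^n-1)}\geq q$ with $\ord_p c_0^{(j)}\geq 2$ for $1\leq j\leq p^{n-1}-1$.

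Two small presentational points. First, your stated ranges $M\in[N-p^{n-1}+1,N-2]$ and $i-1-l\in[1,p^{n-1}-2]$ are off by one at the endpoint $i=p^{n-1}$ (they should allow $M=N-p^{n-1}$ and $i-1-l=p^{n-1}-1$), but the bounds $Q(M)=q-1$ and $\ord_p c_0^{(i-1-l)}\geq 2$ still hold there, so the argument goes through unchanged. Second, the special-index equality at $i=p^{n-1}$ genuinely requires your expansion argument to force $\ord_p c_{N-1}^{(p^{n-1}-1)}\geq q+2$ (the IH alone gives only $\geq q+1$, which would not rule out cancellation against the second summand); you do supply this since your expansion covers $1\leq i\leq p^{n-1}$, but the sentence ``for $i\geq p^{n-1}$ the analysis parallels the previous case'' slightly obscures that dependence.
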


\begin{proof} 
 If $N=0$ then the first and middle (in)equality of \eqref{lb1} follow from Lemma \ref{basiclem}.(2), and the last inequality follows from Lemma \ref{basiclem}.(1). Now define for $M\geq 0$ the set
 $$
S_M=\{N\geq 0 \mid Q(N)=M\}.
$$
By induction on $M$, it suffices to establish the following claims.

\smallskip

\noindent \bf Claim 1. \it Fix $M\geq 0$. If \eqref{lb1} holds at $N=M(p^n-p^{n-1})$ then it holds for all $N\in S_M$. 

\noindent \bf Claim 2. \it Fix $M\geq 0$. If \eqref{lb1} holds for all $N\in S_M$ then it also holds for $N=(M+1)(p^n-p^{n-1})$. \nf

\smallskip

\noindent \it Proof of Claim 1. \nf We induct on $N$. By assumption, \eqref{lb1} is true at $N=M(p^n-p^{n-1})$ so assume it holds for some $N\in S_M$ with $N+1$ also in $S_M$. Since $N,N+1\in S_M$, we have $Q(N)=Q(N+1)=M$  and $R(N+1)=R(N)+1$. Therefore, we must show that
\begin{equation}\label{ms1}
\ord_p c_{N+1}^{(i)}\begin{cases}\geq M+2& \text{if $0\leq i\leq R(N)+ p^{n-1}$}\\ 
= M+1 & \text{if $i=R(N)+p^{n-1}+1$}\\
\geq M+1& \text{if $R(N)+p^{n-1}+2\leq i\leq p^n-1$. }
\end{cases}
\end{equation}
Our inductive hypothesis tells us that 
\begin{equation}\label{one}
\ord_p c_N^{(i-1)}\begin{cases}\geq M+2& \text{if $1\leq i\leq R(N)+ p^{n-1}$}\\ 
= M+1 & \text{if $i=R(N)+p^{n-1}+1$}\\
\geq M+1& \text{if $R(N)+p^{n-1}+2\leq i\leq p^n$,}
\end{cases}
\end{equation}
and combining this with Lemma \ref{basiclem}.(1) yields
\begin{align}\label{two}
\ord_p(c_N^{(p^n-1)}c_0^{(i)})&\geq M+2.
\end{align}
for all $0\leq i\leq p^n-1$. One can now deduce \eqref{ms1} by taking valuations in \eqref{recursion} and using \eqref{one} and \eqref{two}. 

\smallskip

\noindent \it Proof of Claim 2. \nf For each $0\leq j \leq p^n-p^{n-1}-1$, let $N_j\in S_M$ denote the unique integer having $R(N_j)=p^n-p^{n-1}-1-j$. 
By assumption, \eqref{lb1}  holds for all $N_j$. We must show that 
\begin{equation}\label{M+1}
\ord_p c_{(M+1)(p^n-p^{n-1})}^{(i)}\begin{cases}\geq M+3& \text{if $0\leq i\leq p^{n-1}-1,$}\\ 
= M+2 & \text{if $i=p^{n-1},$}\\
\geq M+2& \text{if $p^{n-1}+1\leq i\leq p^n-1$. }
\end{cases}
\end{equation}
Since \eqref{lb1} holds for $N_0$,
\begin{equation}\label{M2}
\ord_p c_{N_0}^{(i-1)} \begin{cases}\geq M+2&\text{if $i\leq p^n-1,$}\\
= M+1&\text{if $i= p^n$.}
\end{cases}
\end{equation}
But we can say more: if $i \leq p^{n-1}$ then $N_0\geq i-1$ so we can apply Lemma \ref{basiclem}.(3) to obtain
\begin{equation}\label{2nd2}
c_{N_0}^{(i-1)}=\sum_{j=0}^{i-2}\underbrace{{c_{N_{j+1}}}^{(p^n-1)}}_\text{$\ord_p\geq M+1$}\underbrace{c_0^{(i-j-1)}}_\text{$\ord_p\geq 2$},
\end{equation}
where the leftmost valuation is because \eqref{lb1} holds for all $N_j$ and the rightmost valuation follows from Lemma \ref{basiclem}.(2). Thus
$
\ord_p c_{N_0}^{(i-1)}\geq M+3
$
for all $i \leq p^{n-1}$. On the other hand, since \eqref{lb1} holds for $N=0$, the equality in \eqref{M2} implies
\begin{equation}\label{2nd}
\ord_p(c_{N_0}^{(p^n-1)}c_0^{(i)})= M+1+\ord_p c_0^{(i)}\begin{cases} \geq M+3&\text{if $0\leq i\leq p^{n-1}-1$}\\
=M+2&\text{if $i= p^{n-1},$}\\
\geq M+2&\text{if $p^{n-1}+1\leq i\leq p^n-1$}.\\
\end{cases}
\end{equation}
By \eqref{recursion}, we have $c_{(M+1)(p^n-p^{n-1})}^{(i)}=c_{N_0}^{(i-1)}+c_{N_0}^{(p^n-1)}c_0^{(i)}$, and  \eqref{M+1} now follows by taking valuations. 
\end{proof}

For the rest of this section, we fix a power series $F=\sum a_i T^i\in \Lambda_K$.

\begin{lemma}\label{explicitproj} $
\displaystyle F\equiv  \sum_{i=0}^{p^n-1}\bigg(a_i+\sum_{N=0}^\infty a_{p^n+N}c_{N,n}^{(i)}\bigg)T^i\Mod \omega_n. 
$
\end{lemma}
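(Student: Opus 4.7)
The plan is to reduce everything to a single inductive claim: for every $N\geq 0$,
\[
T^{p^n+N}\equiv \sum_{i=0}^{p^n-1} c_{N,n}^{(i)}\, T^i \pmod{\omega_n}.
\]
Once this is established, the statement of the lemma follows immediately by writing $F=\sum_{i=0}^{p^n-1} a_i T^i + \sum_{N\geq 0} a_{p^n+N} T^{p^n+N}$, reducing each $T^{p^n+N}$ via the displayed congruence, and swapping the order of summation. (The swap is justified because $\ord_p c_{N,n}^{(i)}\to\infty$ as $N\to\infty$ by Proposition \ref{lbounds}, so the inner series converges in $\Oo$ for each fixed $i$.)

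To prove the displayed congruence, I would induct on $N$. For the base case $N=0$, expand $\omega_n=(1+T)^{p^n}-1=T^{p^n}+\sum_{i=1}^{p^n-1}\binom{p^n}{i}T^i$ to get $T^{p^n}\equiv -\sum_{i=1}^{p^n-1}\binom{p^n}{i}T^i=\sum_{i=0}^{p^n-1} c_{0,n}^{(i)} T^i\pmod{\omega_n}$, using $c_{0,n}^{(0)}=0$. For the inductive step, multiply the claim at stage $N$ by $T$ to obtain
\[
T^{p^n+N+1}\equiv \sum_{i=1}^{p^n-1} c_{N,n}^{(i-1)}T^i + c_{N,n}^{(p^n-1)}T^{p^n}\pmod{\omega_n},
\]
then substitute the $N=0$ case for the stray $T^{p^n}$ term and collect coefficients. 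The coefficient of $T^i$ becomes $c_{N,n}^{(i-1)}+c_{N,n}^{(p^n-1)}c_{0,n}^{(i)}$, which is precisely $c_{N+1,n}^{(i)}$ by the defining recursion \eqref{recursion}. A small bookkeeping check at $i=0$ (noting $c_{N,n}^{(0)}=0$ for all $N$, as a trivial consequence of the recursion and $c_{0,n}^{(0)}=0$) completes the step.

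There is no real obstacle here: the lemma is essentially the definition of the $c_{N,n}^{(i)}$ unpacked. The only things to be careful about are the indexing convention $c_{N,n}^{(i)}=0$ for $i\leq 0$, which ensures the inductive step works cleanly at the boundary $i=0$, and the convergence of the inner sum $\sum_N a_{p^n+N}c_{N,n}^{(i)}$, which is guaranteed by the lower bound on $\ord_p c_{N,n}^{(i)}$ in terms of $Q(N)_n$.
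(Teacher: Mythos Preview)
Your proof is correct and follows essentially the same approach as the paper: establish the congruence $T^{p^n+N}\equiv \sum_{i=0}^{p^n-1}c_{N,n}^{(i)}T^i\pmod{\omega_n}$ by induction on $N$, then split $F$ into its low-degree and high-degree parts and invoke Proposition~\ref{lbounds} for convergence. In fact you have spelled out the induction in more detail than the paper does (the paper simply writes ``By induction, we have \eqref{wndivlem}''), so your argument is if anything more complete.
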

\begin{proof} By induction, we have 
\begin{equation}\label{wndivlem}
T^{p^n+N}\equiv \sum_{i=0}^{p^n-1}c_N^{(i)}T^i\Mod \omega_n.
\end{equation}
for all $N\geq 0$.
By Proposition \ref{lbounds}, the terms $c_N^{(i)}$ have unbounded valuations as $N\rightarrow \infty$, hence the sum $\sum_{N=0}^\infty a_{p^n+N}c_N^{(i)}$ converges. The result now follows by writing
$$
F= \sum_{i=0}^{p^{n}-1}a_iT^i+ \sum_{N=0}^{\infty}a_{p^n+N}T^{p^n+N}, 
$$
reducing modulo $\omega_n$, and applying \eqref{wndivlem} to each term $T^{p^n+N}$. 
\end{proof}

We are now ready to prove Theorem \ref{mainpl1}. 
\begin{theorem}\label{mainpl} If $F$ is $p$-large at $n\geq 1$ then
\begin{align*}
\mu(\pi_n(F))&=\mu(F)+I(F)_n\\
\lambda(\pi_n(F))&=\lambda(F)-I(F)_n(p^n-p^{n-1})=R(F)_n+p^{n-1}, 
\end{align*}
where $I(F)_n:=Q(F)_n+1$.
In particular, 
 \begin{equation}\label{nrel}
 \lambda(F)+\mu(F)(p^n-p^{n-1})=\lambda(\pi_n(F))+\mu(\pi_n(F))(p^n-p^{n-1}).
\end{equation} 
\end{theorem}

\begin{proof} 
Since $\lambda(F)=p^n+R(F)+Q(F)(p^n-p^{n-1})$, it suffices to prove 
\begin{align*}
\mu(\pi_n(F)) &=\mu(F)+Q(F)+1, \\
\lambda(\pi_n(F)) &= R(F)+p^{n-1}.
\end{align*}
We may assume that $\mu(F)=0$. From Lemma \ref{explicitproj}, the Iwasawa invariants of $\pi_n(F)$ are equal to those of the polynomial $\sum_{i=0}^{p^n-1}(a_i+s_i)T^i$, where $s_i=\sum_{N\geq 0}a_{p^n+N}c_{N}^{(i)}$. Letting $b_i=a_i+s_i$, Lemma \ref{lemma1} implies
\begin{align*}
\ord_p(b_i)&\geq \min( \ord_p a_i, \ord_p s_i) \begin{cases}>Q(F)+1& \text{if $0\leq i\leq R(F)+ p^{n-1}-1,$}\\ 
\geq Q(F)+1& \text{if $R(F)+p^{n-1}+1\leq i\leq p^n-1$, }
\end{cases}
\end{align*}
and
\begin{align*} 
\ord_p(b_{R(F)+p^{n-1}})&=\min( \ord_p a_{R(F)+p^{n-1}}, Q(F)+1)=Q(F)+1.
\end{align*}
The desired Iwasawa invariants follow. Equation \eqref{nrel} can now be obtained by writing $\lambda(F)=p^n+R(F)+Q(F)(p^n-p^{n-1})$ and using the formulae for the Iwasawa invariants of $\pi_n(F)$ just proven. 
\end{proof}

\begin{example}\nf 
Since $T^{p^n}$ is $p$-large at $n$, Theorem \ref{mainpl} yields $\mu(\pi_n(T^{p^n}))=1$ and $\lambda(\pi_n(T^{p^n}))=p^{n-1}$.
(These invariants were also computed directly in Example \ref{tpn}.)
\end{example}

\begin{remark}\label{examplefail}\nf Theorem \ref{mainpl} can fail for functions that are not $p$-large at $n$. One extreme example is $F=\omega_n$, where clearly $\lambda(\omega_n)=p^n$ and $\mu(\omega_n)=0$, but since $\pi_n(\omega_n)=0$, both of the $n$th-layer Iwasawa invariants are zero.  
\end{remark}

\subsection{$p$-largeness via the Newton polygon} \label{plargeNPsection} In this section, we fix an integer $n\geq 1$ and a power series $F=\sum a_iT^i\in\Lambda_K$ with Iwasawa invariants $\lambda$ and $\mu$. Recall the \emph{$p$-adic Newton polygon} of $F,$ denoted $\NP_p(F)$, is the unbounded region in $ \R^2$ determined by the lower convex hull of the points $\{(i,\ord_p a_i) \mid i\geq 0\}$.
 Let 
$$
 \T_{n}(F)=\sum_{i=0}^{n}a_iT^i
 $$
 denote the degree $n$ polynomial truncation of $F$ and, to ease notation,  in this section we use the notation
$$
t_n:=p^n-p^{n-1}.
$$

\begin{definition}\label{bur} \nf 
Let $\mathcal{B}_n(F)\subseteq \R^2$ be the region lying on or above the graph of the piecewise linear function $f_F: [0,\lambda-1]\rightarrow \R$ defined as follows. If $Q(F)=0$, define
\begin{equation}\label{fQz}
f_F(x)=\mu+\begin{cases}2-\frac{x}{R(F)+p^{n-1}+1},& \text{$0\leq x\leq R(F) +p^{n-1}+1,$}\\
1,& \text{$R(F)+p^{n-1}+1\leq x\leq \lambda-1$.}
\end{cases}
\end{equation}
If $Q(F)>0$, define
\begin{equation}\label{fQ}
f_F(x)=\mu+\begin{cases}2+Q(F),& \text{$0\leq x\leq R(F) +p^{n-1},$}\\ 
2+Q(F)-\frac{Q(F)+\frac{1}{e}}{Q(F)t_n+1}\big(x-R(F)-p^{n-1}\big),& \text{$R(F) +p^{n-1}\leq x\leq \lambda-t_n+1,$}\\
1,& \text{$\lambda-t_n+1\leq x\leq \lambda-1$.}
\end{cases}
\end{equation}
See the shaded regions in Figure \ref{ffig1} for diagrams of $\cB_n(F)$ when $e=1$ and $\mu=0$.
\end{definition}


\begin{proposition}\label{newton}  If $\lambda\geq p^n$ and $\NP_p(\T_{\lambda-1}(F))\subseteq\mathcal{B}_n(F)$ then $F$ is $p$-large at $n$.
\end{proposition}
\begin{proof} We may assume that $\mu=0$. 
 Since $\mathcal{B}_n(F)$ contains the Newton polygon of $ \T_{\lambda-1}(F)=\sum_{i=0}^{\lambda-1}a_iT^i$, each vertex of the Newton polygon must lie on or above the graph of the function $f_F$. By definition this means that
\begin{equation}\label{SPL}
\ord_pa_i \geq f_F(i)
\end{equation}
for all $i\in \{0,\dots, \lambda-1\}$ (see Figure \ref{ffig1}). 
If $F$ is not $p$-large at $n$ then there must be some $i_0$ for which $\ord_p(a_{i_0})\leq v_{i_0}(F)-\frac{1}{e}$. 
But now \eqref{SPL} implies 
$
f_F(i_0)\leq v_{i_0}(F)-\frac{1}{e},
$
which contradicts Lemma \ref{lemv}. 
\end{proof}

\begin{figure*}[h!]
\begin{center}
\hspace{-5cm}
\begin{minipage}{.1\textwidth}
\newcommand*{\TopPath}{(0,5.5) -- (0,2) -- (6,1) -- (11,1) -- (11,5.5)}%
\begin{tikzpicture}[xscale=.5,yscale=.7]
\shade [top color= white, bottom color=gray] \TopPath -- cycle;
\draw [thick] \TopPath;
\draw (4,6) node[right,black,xshift=.3cm]{$Q(F)=0$};

\foreach \plm[count=\cnt] in {*}
\draw[] plot[only marks, mark size=+.7pt, mark=\plm] coordinates {(1,0) (2,0) (4,0) (5,0)  (7,0)
(8,0)(10,0) (1,1) (2,1) }; 

\foreach \plm[count=\cnt] in {*}
\draw[] plot[only marks, mark size=+2.5pt, mark=\plm] coordinates {
                        (6,1)(7,1)(8,1)(10,1)(11,1)
(0,2) (1,2) (2,2)(4,2)(5,2)
}; 

\foreach \plm[count=\cnt] in {*}
\draw[] plot[only marks, mark size=+.5pt, mark=\plm] coordinates {
               (4,1)(5,1)(6,1)(7,1)
(0,2)(1,2)(2,2)(4,2)(5,2)(6,2)(7,2)(8,2)(10,2)(11,2)
(0,3)(1,3)(2,3)(4,3)(5,3)(6,3)(7,3)(8,3)(10,3)(11,3)
(0,4)(1,4)(2,4)(4,4)(5,4)(6,4)(7,4)(8,4)(10,4)(11,4)
(0,5)(1,5)(2,5)(4,5)(5,5)(6,5)(7,5)(8,5)(10,5)(11,5)
}; 

\draw[{black}] plot coordinates {(0,5.3) (0,0) (11.5,0) }; 

\foreach \x in {1}
\draw (\x cm, 0pt) -- (\x cm,-0pt) 
node[anchor=north,xshift=1cm,font=\footnotesize]
{$ \dots$};

\foreach \x in {9}
\draw (\x cm, 0pt) -- (\x cm,-0pt) node[anchor=north,font=\footnotesize] {$ \dots$};

\foreach \x in {6}
\draw (\x cm, 3pt) -- (\x cm,-3pt) node[anchor=north,font=\footnotesize] {$ \scriptscriptstyle R(F)+p^{n-1}+1$};

\foreach \x in {11}
\draw (\x cm, 3pt) -- (\x cm,-3pt) node[anchor=north,font=\footnotesize] {$\scriptscriptstyle\lambda-1$};

\foreach \y in {2}
\draw (3pt, \y cm) -- (-3pt,\y cm) node[anchor=east,font=\footnotesize] {$ \scriptstyle 2$};

\foreach \y in {1}
\draw (3pt,\y cm) -- (-3pt,\y cm) node[anchor=east,font=\footnotesize] {$ \scriptstyle 1$};

\let\foo\undefined{\TopPath}
\end{tikzpicture}
\end{minipage}
\hspace{6cm}
\begin{minipage}{.0\textwidth}
\newcommand*{\TopPath}{(0,8.3) -- (0,6) -- (4,6) -- (17,1) -- (18,1)--(18,8.3)}%
\newcommand*{\DottedPathOne}{(-.5,2.8) -- (-.5,3.4)}
\newcommand*{\DottedPathTwo}{(1.5,-.4) -- (2.4,-.4)}
\newcommand*{\DottedPathThree}{(11.5,-.4) -- (12.4,-.4)}

\begin{tikzpicture}[xscale=.3,yscale=.46]
\hspace{-13mm}
\shade [top color= white, bottom color=gray] \TopPath -- cycle;
\draw [thick] \TopPath;
\draw [dotted, thick] \DottedPathOne;
\draw [dotted, thick] \DottedPathTwo;
\draw [dotted, thick] \DottedPathThree;
\draw (7,9) node[right,black,xshift=.2cm]{$ Q(F)>0$};

\foreach \plm[count=\cnt] in {*}
\draw[{black}] plot coordinates {(0,8.3) (0,0) (18.5,0)}; 

\foreach \x in {4}
\draw (\x cm, 5pt) -- (\x cm,-5pt) 
node[anchor=north,xshift=.2cm,font=\footnotesize] 
{$\scriptscriptstyle R(F)+p^{n-1}$};

\foreach \x in {17}
\draw (\x cm, 5pt) -- (\x cm,-5pt) node[anchor=north,xshift=-2.6mm,font=\footnotesize] {$\scriptscriptstyle \lambda-t_n+1$};

\foreach \x in {18}
\draw (\x cm, 5pt) -- (\x cm,-5pt) node[anchor=north,xshift=1.9mm,font=\footnotesize] {$\scriptscriptstyle \lambda-1$};


\foreach \y in {1}
\draw (5pt,\y cm) -- (-5pt,\y cm) node[anchor=east,font=\footnotesize] {$\scriptscriptstyle 1$};

\foreach \y in {2}
\draw (5pt,\y cm) -- (-5pt,\y cm) node[anchor=east,font=\footnotesize] {$\scriptscriptstyle 2$};

\foreach \y in {4}
\draw (5pt,\y cm) -- (-5pt,\y cm) node[anchor=east,font=\footnotesize] {$\scriptscriptstyle Q(F)$};

\foreach \y in {5}
\draw (5pt,\y cm) -- (-5pt,\y cm) node[anchor=east,font=\footnotesize] {$\scriptscriptstyle Q(F)+1$};

\foreach \y in {6}
\draw (5pt,\y cm) -- (-5pt,\y cm) node[anchor=east,font=\footnotesize] {$\scriptscriptstyle Q(F)+2$};

\foreach \plm[count=\cnt] in {*}
\draw[] plot[only marks, mark size=+.7pt, mark=\plm] coordinates {
(0,0)(1,0)(3,0)(4,0)(5,0)(6,0)(7,0)(8,0)(9,0)(10,0)(11,0)(13,0)(14,0)(15,0)(16,0)(17,0)
(0,1)(1,1)(3,1)(4,1)(5,1)(6,1)(7,1)(8,1)(9,1)(10,1)(14,1)   (14,1)(15,1)(16,1)(17,1)(18,1)  
(0,2)(1,2)(3,2)(4,2)(5,2)(6,2)(7,2)(8,2)(9,2)(10,2)(14,2)     
(0,4)(1,4)(3,4)(4,4)(5,4)(6,4)(7,4)(8,4)(9,4)(10,4)(14,4)(15,4)(16,4)(17,4)(18,4)
(0,5)(1,5)(3,5)(4,5)(5,5)(6,5)(7,5)(8,5)(9,5)(10,5)(14,5)(15,5)(16,5)(17,5)(18,5)
(0,6)(1,6)(3,6)(4,6)(5,6)(6,6)(7,6)(8,6)(9,6)(10,6)(14,6)(15,6)(16,6)(17,6)(18,6)
(0,7)(1,7)(3,7)(4,7)(5,7)(14,7)(15,7)(16,7)(17,7)(18,7)
(18,0)(18,1)(18,2)(14,8)(15,8)(16,8)(17,8)(18,8)
(11,1)(11,2)(11,4)(11,5)(11,6)(11,7)(11,8)
(13,1)(13,2)(13,4)(13,5)(13,6)(13,7)(13,8)
(17,2)
}; 

\foreach \plm[count=\cnt] in {*}
\draw[] plot[only marks, mark size=+3.3pt, mark=\plm] coordinates {
(0,6)(1,6)(3,6)(4,6)
(5,5) (6,5) (7,5)
(8,4) (9,4) (10,4)
(14,2) (15,2) (16,2)
(17,1)(18,1)
}; 

\foreach \plm[count=\cnt] in {*}
\draw[] plot[only marks, mark size=+.5pt, mark=\plm] coordinates {   
(0,8)(1,8)(3,8)(4,8)(5,8)(6,8)(7,8)(8,8)(9,8)(10,8)(14,8)      
(6,7)(7,7)(8,7)(9,7)(10,7)(14,7)      
(9,6)(10,6)(14,6)      
(14,5)      
(18,4)(18,5)(18,6)(18,7)(18,8)
}; 
\let\foo\undefined{\TopPath}
\end{tikzpicture}
\end{minipage}
\caption{ In gray are the regions $\cB_n(F)$ in the cases $Q(F)=0$ and $Q(F)>0$, under the assumption $\mu=0$. In bold are the points $(i,v_i(F))$.  The diagram illustrates Proposition \ref{newton}: if the Newton polygon of $\T_{\lambda-1}(F)$ is contained entirely in $\mathcal{B}_n(F)$  then each vertex of the Newton polygon is a lattice point within the gray region, implying that $\ord_pa_i\geq v_i(F)$ for all $i\in \{0,\dots, \lambda-1\}$ and therefore that $F$ is $p$-large at $n$.}
\label{ffig1}
\end{center}
\end{figure*}

\section{Iwasawa invariants of Mazur-Tate elements when $k\geq p+2$}\label{mainsection}

In this section we prove Theorem \ref{main1} for weights $k\geq p+2$. In light of Theorem \ref{lifts}, this amounts to computing the Iwasawa invariants of $\pi_n(\eta^j(L_p^\pm(f)\log_{k,n}^{\pm}))$ for $n\gg0$, a task to which we now turn. 

\subsection{Iwasawa invariants of half-logarithms at layer $n$}\label{plargeiw} 
As a placeholder for the signed $p$-adic $L$-functions, in the current section we let $\mathcal{L}\in \Lambda_K$ denote a fixed power series with Iwasawa invariants $\lambda$ and $\mu$. Our goal is to compute the Iwasawa invariants of $\pi_n(\eta^j(\mathcal L\log_{k,n}^{\e_{n+1}}))$  when $k\geq p+2$.
Note that since 
\begin{align*}
\mu(\eta^j(\mathcal L\log_{k,n}^{\e_{n+1}}))&=\mu,\\
\lambda(\eta^j(\mathcal L\log_{k,n}^{\e_{n+1}}))&=\lambda+(k-1)q_n,
\end{align*}
when $k\leq p+1$ we have $\lambda+(k-1)q_n<p^n$ for $n\gg0$ and the $n$th-layer invariants are therefore unchanged by Lemma \ref{small_lam}.

We begin with some preliminary remarks.
First, recall from the proof of Lemma \ref{omeganinv} that $\omega_{n,i}^{\e_{n+1}}$ has $\lambda$-invariant $q_n$ and trivial $\mu$-invariant. By Lemma \ref{etainv}, the same is true of the polynomials $\eta^j(\omega_{n,i}^{\e_{n+1}})$. Thus, by the Weierstrass preparation theorem, we may write 
$
\eta^j(\omega_{n,i}^{\e_{n+1}})=(T^{q_n}+pW_{i})U_{i},
$
for some $W_i\in \Z_p[T]$ of degree $<q_n$ and units $U_i\in \Lambda^\times$, both dependent on $n$ and $j$. Therefore, 
$$
\eta^j(\log_{k,n}^{\e_{n+1}})\equiv \prod_{i=0}^{k-2}(T^{q_n}+pW_i)\Mod \Lambda^\times.
$$
Similarly, we have
$
\eta^j(\mathcal L)\equiv p^\mu (T^\lambda+ p L_0)\Mod \Lambda^\times
$
for some $L_0\in \Oo[T]$ of degree $<\lambda$. It follows that
\begin{equation}\label{logdecomp1}
\eta^j(\mathcal L \log_{k,n}^{\e_{n+1}})=LF_{k,n}U_n
\end{equation}
where $U_n\in \Lambda^\times$ and 
\begin{align*}
L &:=p^\mu(T^\lambda+p L_0),\\
F_{k,n}&:=\prod_{i=0}^{k-2}(T^{q_n}+pW_i).
\end{align*}
Thus,
by Corollary \ref{nunits},
\begin{align}\label{logdecomp2}\begin{split}
\mu(\pi_n(\eta^j(\mathcal L\log_{k,n}^{\e_{n+1}})))&=\mu(\pi_n(LF_{k,n})),\\
\lambda(\pi_n(\eta^j(\mathcal L\log_{k,n}^{\e_{n+1}})))&=\lambda(\pi_n(LF_{k,n})).
\end{split}
\end{align}
It therefore suffices to compute the Iwasawa invariants of $\pi_n(LF_{k,n})$. To  accomplish this, we need only show that $LF_{k,n}$ is $p$-large at all sufficiently large values of $n$, in which case we can appeal to Theorem \ref{mainpl}.

Before proceeding, we set some notation. Let 
$$
\lambda_{n}=\lambda(LF_{k,n})=\lambda+(k-1)q_n.
$$
Define
\begin{align}
\nu&=\bigg\lfloor\frac{k-p-2}{p^2-1}\bigg\rfloor,\qquad  \nu^- =\nu(p-1)+1,\qquad \nu^+ = p\nu^{-}\label{nudef1}
\end{align}
Let $l, a,$ and $b$ be the unique integers satisfying 
\begin{align}
k-p-2&=l+\nu(p^2-1)&(0\leq l\leq p^2-2)\label{lkdef}\\
l&=a+bp& (0\leq a \leq p-1),\label{abkdef}
\end{align}

\begin{lemma}\label{RQ} For $n\gg0$ we have
\begin{align*}
R(LF_{k,n})_n&=\begin{cases}
\lambda-\nu^{\e_{n+1}}+aq_n+bq_n'&\text{if $k\not\equiv p+2 \Mod (p^2-1)$, }\\
\lambda-\nu^{\e_{n+1}} & \text{if $k\equiv p+2 \Mod (p^2-1)$ and $\lambda \geq \nu^{\e_{n+1}}$,}\\
\lambda-\nu^{\e_{n+1}}+p^n-p^{n-1}&\text{if $k\equiv p+2 \Mod (p^2-1)$ and $\lambda <\nu^{\e_{n+1}}$,}\\\end{cases}\\
Q(LF_{k,n})_n&=\begin{cases}
\frac{(k-1-a)p-b}{p^2-1}-\frac{p}{p-1}&\text{if $k\not\equiv p+2 \Mod (p^2-1)$, }\\
\frac{(k-1)p}{p^2-1}-\frac{p}{p-1}& \text{if $k\equiv p+2 \Mod (p^2-1)$ and $\lambda \geq \nu^{\e_{n+1}}$,}\\
\frac{(k-1)p}{p^2-1}-\frac{p}{p-1}-1&\text{if $k\equiv p+2 \Mod (p^2-1)$ and $\lambda <\nu^{\e_{n+1}}$.}
\\\end{cases}
\end{align*}
\end{lemma}
\begin{proof}  Since $k-p-2=l+\nu(p^2-1)=a+bp+\nu(p^2-1)$, we have
\begin{align*}
R(F_{k,n})_n&\equiv (k-1)q_n-p^n\Mod(p^n-p^{n-1})\\
&\equiv aq_n+bq_n'-\nu^{\e_{n+1}}\Mod (p^n-p^{n-1}).
\end{align*}
The desired equations for $R(LF_{k,n})_n$ now follow from $R(LF_{k,n})_n\equiv \lambda+R(F_{k,n})_n\Mod (p^n-p^{n-1})$ and the definitions of $a$ and $b$. 
Now write $Q(LF_{k,n})_n=(\lambda_n-p^n-R(LF_{k,n})_n)/t_n$, substitute in the equations for $R(LF_{k,n})_n$ just computed, and take the limit as $n\rightarrow \infty$. (Use the fact that $q_n/t_n\rightarrow p/(p^2-1)$ and $q_n'/t_n\rightarrow 1/(p^2-1)$.)
\end{proof}

\begin{proposition}\label{lpl} Suppose that $p>2$, or that $p=2$ and $e\leq 2$. If
\begin{itemize}
\item[(i)] $k\geq p+3$, 
\item[(ii)] $k=p+2$ and $\lambda\geq 1$, or
\item[(iii)] $k=p+2$ and $\lambda\geq p$,
\end{itemize}
then $LF_{k,n}$ is $p$-large at $n$ for all $n\gg0$ (if (i)), all even $n\gg0$ (if (ii)), or all odd $n\gg 0$ (if (iii)).
\end{proposition}

\begin{proof} First, note that our assumptions on $k$ and $\lambda$ force $\lambda_n\geq p^n$ for all $n\gg0$ (if (i)), or for all even (resp., odd) $n$ if (ii) (resp., (iii)). Letting 
$$
G_{k,n} = p^\mu(p^{k-1}+pT^{(k-2)q_n}+pT^{\lambda_n-1}+T^{\lambda_n}),
$$
 it suffices to show
\begin{equation}\label{sufnp}
\NP_p\big(\T_{\lambda_{n}-1}(G_{k,n}) \big)\subseteq \mathcal{B}_n(G_{k,n}),\qquad n\gg0,
\end{equation} 
since then (using Lemma \ref{boundNP} and the fact that $LF_{k,n}$ and $G_{k,n}$ have the same Iwasawa invariants) we have
$$
\NP_p\big(\T_{\lambda_{n}-1}(LF_{k,n})) \subseteq\NP_p\big(\T_{\lambda_{n}-1}(G_{k,n}))
\subseteq \mathcal{B}_n(G_{k,n})
=\mathcal{B}_n(LF_{k,n}), 
$$
in which case the result follows from Proposition \ref{newton}. 

We may assume $\mu=0$. Set $R_n=R(LF_{k,n})_n$ and $Q_n=Q(LF_{k,n})_n$, so 
$$
\lambda_{n}=\lambda+(k-1)q_n=p^n+R_n+Q_nt_n.
$$
Let $\NP_n := \NP_p\big(\T_{\lambda_{n}-1}(G_{k,n}) \big)$ and  $\cB_n:=\cB_n( G_{k,n})$, both of which are pictured in Figure \ref{fig3}.  Let
\begin{align*}
l(\NP_n)&:\quad  y=k-1-\frac{x}{q_n},\\
l(\cB_n)&: \quad y=
\begin{cases}
Q_n+2-\frac{x}{R_n+p^{n-1}+1}&\quad \text{if $Q_n=0$}\\
Q_n+2-\frac{Q_n+\frac{1}{e}}{Q_nt_n+1}\big(x-R_n-p^{n-1}\big)&\quad \text{if $Q_n>0$}
\end{cases}
\end{align*}
denote the unique line segments of $\NP_n$ and $\cB_n$ with nonzero and non-infinite slope. The containment \eqref{sufnp} will follow if we can establish the following properties (see Figure \ref{fig3}) for $n$ large enough:
\begin{enumerate}
\item The middle vertex of $\NP_n$ lies along the rightmost (horizontal) edge of $\mathcal{B}_n$. 
\item The slope of $l(\NP_n)$ is less than or equal to the slope of $l(\cB_n)$. 
\item If $Q_n>0$, then the rightmost vertex of $l(\cB_n)$ is below the point on $l(\NP_n)$ with $x$-coordinate $\lambda_n-t_n+1$. 
\end{enumerate}


\begin{figure*}[htp!]
\begin{center}
\hspace{-9cm}
\begin{minipage}{.2\textwidth}
\newcommand*{\SolidPath}{(0,6.5) -- (0,2) --(4.5,1) -- (8,1) -- (8,6.5) }
\newcommand*{\DottedPath}{(0,6.5) -- (0,5) -- (6.5,1) -- (8,1) -- (8,6.5)}

\begin{tikzpicture}[xscale=.6,yscale=.6]
\draw [thick] \SolidPath;
\draw [dotted, ultra thick] \DottedPath;
\draw (3,6.5) node[right,black,xshift=.3cm]{$Q_n=0$};

\foreach \plm[count=\cnt] in {*}
\draw[{black}] plot coordinates {(0,6.5) (0,0) (8.5,0)}; 

\foreach \x in {8}
\draw (\x cm, 3pt) -- (\x cm,-3pt) node[anchor=north,font=\footnotesize,xshift=.1cm] {$\scriptstyle \lambda_n-1$};

\foreach \x in {6.5}
\draw (\x cm, 3pt) -- (\x cm,-3pt) node[anchor=north,font=\footnotesize,xshift=.1cm] {$\scriptstyle (k-2)q_n$};

\foreach \x in {4.5}
\draw (\x cm, 3pt) -- (\x cm,-3pt) node[anchor=north,font=\footnotesize,xshift=-.1cm] {$\scriptstyle R_n+p^{n-1}+1$};

\foreach \y in {1}
\draw (2pt,\y cm) -- (-2pt,\y cm) node[anchor=east,font=\footnotesize] {$\scriptstyle1$};

\foreach \y in {2}
\draw (2pt,\y cm) -- (-2pt,\y cm) node[anchor=east,font=\footnotesize] {$\scriptstyle2$};

\foreach \y in {5}
\draw (2pt,\y cm) -- (-2pt,\y cm) node[anchor=east,font=\footnotesize] {$\scriptstyle k-1$};

\foreach \plm[count=\cnt] in {*}
\draw[] plot[only marks, mark size=+3pt, mark=\plm] coordinates {
(0,5) (0,2) (4.5,1) (6.5,1) (8,1)
}; 

\let\foo\undefined{\SolidPath}
\let\foo\undefined{\DottedPath}
\end{tikzpicture}
\end{minipage}
\hspace{4cm}
\begin{minipage}{.0\textwidth}

\newcommand*{\SolidPath}{(0,6.5) -- (0,2.5) --(2.5,2.5) -- (8,1) -- (8,0.5) -- (11,0.5) -- (11,6.5) }
\newcommand*{\DottedPath}{(0,6.5) -- (0,6) -- (9.5,0.5) -- (11,0.5) -- (11,6.5)}

\begin{tikzpicture}[xscale=.6,yscale=.6]
\draw [thick] \SolidPath;
\draw [dotted, ultra thick] \DottedPath;
\draw (4,6.5) node[right,black,xshift=.3cm]{$Q_n>0$};

\foreach \plm[count=\cnt] in {*}
\draw[{black}] plot coordinates {(0,6.5) (0,0) (11.5,0)}; 

\foreach \x in {11}
\draw (\x cm, 3pt) -- (\x cm,-3pt) node[anchor=north,font=\footnotesize,xshift=.2cm] {$\scriptstyle \lambda_n-1$};

\foreach \x in {9.5}
\draw (\x cm, 3pt) -- (\x cm,-3pt) node[anchor=north,font=\footnotesize,xshift=0cm] {$\scriptstyle (k-2)q_n$};

\foreach \x in {8}
\draw (\x cm, 3pt) -- (\x cm,-3pt) node[anchor=north,font=\footnotesize,xshift=-.35cm] {$\scriptstyle \lambda_n-t_n+1$};

\foreach \x in {2.5}
\draw (\x cm, 3pt) -- (\x cm,-3pt) node[anchor=north,font=\footnotesize] {$\scriptstyle R_n+p^{n-1}$};

\foreach \y in {1}
\draw (2pt,\y cm) -- (-2pt,\y cm) node[anchor=east,font=\footnotesize] {$\scriptstyle2-\frac{1}{e}$};

\foreach \y in {0.5}
\draw (2pt,\y cm) -- (-2pt,\y cm) node[anchor=east,font=\footnotesize] {$\scriptstyle1$};

\foreach \y in {2.5}
\draw (2pt,\y cm) -- (-2pt,\y cm) node[anchor=east,font=\footnotesize] {$\scriptstyle Q_n+2$};

\foreach \y in {6}
\draw (2pt,\y cm) -- (-2pt,\y cm) node[anchor=east,font=\footnotesize] {$\scriptstyle k-1$};

\foreach \plm[count=\cnt] in {*}
\draw[] plot[only marks, mark size=+3pt, mark=\plm] coordinates {
(0,6) (0,2.5) (2.5,2.5) (8,1) (8,0.5) (9.5,0.5) (11,0.5)
}; 
\let\foo\undefined{\SolidPath}
\let\foo\undefined{\DottedPath}
\end{tikzpicture}
\end{minipage}
\caption{The boundary of $\NP_n$ (dotted) and $\cB_n$ (bold) when $Q_n=0$ and $Q_n>0$, illustrating the containment \eqref{sufnp}.}
\label{fig3}
\end{center}
\end{figure*}

\begin{enumerate}
\item[Case $Q_n=0$:] \nf 
To establish (1), we must show that 
 \begin{equation}\label{ieq}
 R_n+p^{n-1}+1\leq (k-2)q_n,\qquad n\gg0.
 \end{equation}
If $k\geq  p+4$ then $(R_n+p^{n-1}+1)/q_n\leq p^n/q_n\rightarrow p+1$ as $n\rightarrow \infty$, hence $(R_n+p^{n-1}+1)/q_n\leq p+2\leq k-2$ for $n\gg0$. If $k=p+3$ then from Lemma \ref{RQ}, $R_n=\lambda-1+q_n$ or $\lambda-p+q_n$ depending on whether $n$ is even or odd, respectively. Now \eqref{ieq} follows from the fact that one can take $n\gg0$ so that $\lambda\leq t_n-q_n.$ Finally, suppose $k=p+2$ and $\lambda\geq \nu_{p+2}^{-*} (=\text{1 or $p$})$ for some $*\in \{+,-\}$. Then Lemma \ref{RQ} gives $R_n=\lambda-\nu_{p+2}^{-*}$ for all $n\gg0$ of parity $*$, and \eqref{ieq} follows from the fact that $\lambda\leq pq_n-p^{n-1}$ for large enough $n$. To establish (2), we must show that 
 $-\frac{1}{q_n}\leq -\frac{1}{R_n+p^{n-1}+1}$ for $n\gg0$, and this follows  from the fact that $q_n<p^{n-1}\leq p^{n-1}+R_n+1$. 

\item[Case $Q_n>0$:] \nf  
  Since $q_n\leq t_n-1$, we know that $\lambda_n-t_n+1\leq (k-2)q_n$ for $n\geq 1$, thus (1) holds.  For (2), we need to show that  $-\frac{1}{q_n}\leq -\frac{Q_n+\frac{1}{e}}{Q_nt_n+1}$,  which is equivalent to checking that $\frac{1}{e}(q_n-e)/(t_n-q_n)\leq Q_n$, and this is true since $\frac{1}{e}(q_n-e)/(t_n-q_n)<1$ for $n\gg 0$. To establish (3), it suffices to check that 
  $$
  2-\frac{1}{e}\leq k-1-\frac{1}{q_n}(\lambda_n-t_n+1),\quad n\gg0.
  $$
Taking the limit as $n\rightarrow \infty $, the above inequality becomes $2-1/e\leq (p^2-1)/p$, which holds for all $p\geq 3$, and at $p=2$ under the assumption $e\leq 2$. 
  \end{enumerate}
  \end{proof}

For $*\in \{+,-\}$, define
\begin{equation}\label{iotaL}
\cI_k^*(\cL):=
 \begin{cases}
\big\lfloor\frac{p(k-2)-1}{p^2-1}\big\rfloor&\text{if  $k\not\equiv p+2 \Mod (p^2-1)$}\\
\frac{p(k-2)-1}{p^2-1}& \text{if $k\equiv p+2 \Mod (p^2-1)$ and $\lambda \geq \nu_k^{*}$}\\
\frac{p(k-2-p)}{p^2-1}&\text{if $k\equiv p+2 \Mod (p^2-1)$ and $\lambda < \nu_k^{*}$},
\end{cases}
\end{equation}
where we recall $\lambda=\lambda(\cL)$. 

\begin{corollary} \label{coro_Llog} Suppose that $p>2$, or that $p=2$ and $e\leq 2$. If $k\geq p+2$ 
then for $n\gg0$ we have 
\begin{align*}
\mu(\pi_n(\eta^j(\cL\log_{k,n}^{\e_{n+1}})))&=\mu+\cI_k^{*}(\cL),\\
\lambda(\pi_n(\eta^j(\mathcal L\log_{k,n}^{\e_{n+1}})))&=\lambda+(k-1)q_n-\cI_k^{*}(\cL)(p^n-p^{n-1})\\
&= \lambda-\nu_k^{*}+\begin{cases}
aq_n+bq_n'+p^{n-1}&\text{if $k\not\equiv p+2 \Mod (p^2-1)$, }\\
p^{n-1} &\text{if $k\equiv p+2 \Mod (p^2-1)$ and $\lambda\geq \nu^{*}$,}\\
p^{n} &\text{if $k\equiv p+2 \Mod (p^2-1)$ and $\lambda< \nu^{*}$,}\\
\end{cases}
\end{align*}
where $*=-$ if $n$ is even and $*=+$ if $n$ is odd. 
\end{corollary}
\begin{proof} 
By \eqref{logdecomp2}, Proposition \ref{lpl}, Theorem \ref{mainpl}, it suffices to check that 
$$
\cI_k^*(\cL)=1+\displaystyle\lim_{\substack{n\rightarrow\infty \\ \e_n=-*}}Q(LF_{k,n})_n
$$
for $*\in \{+,-\}$. 
This follows from Lemma \ref{RQ} and the definitions of $a$ and $b$. 
\end{proof}

\subsection{Iwasawa Invariants of Mazur-Tate elements at large weights}\label{section_largeweights}


 
 We are now ready to prove the remaining parts of Theorem \ref{main1}. 
Let $f$ be as in \S\ref{section_MT} and assume $a_p(f)=0$. Fix $\psi\in \hat\Delta$, $j\in \{0,\dots, k-1\}$, and let $\theta_{n,j}^\psi\in \Lambda_{K,n}$ and $L_{p,\psi}^\pm\in \Lambda_K$ denote the Mazur-Tate elements and signed $p$-adic $L$-functions associated to $f$. Define
$$
\iota^*(f,\psi):=\cI_{k}^{*}\big(L_{p,\psi}^{\star*}\big),
$$
where $*\in \{+,-\}$ and $\cI_k^\pm$ is as defined in \eqref{iotaL}.  
To ease notation, write $\mu^\pm(\psi) = \mu\big(L_{p,\psi}^{\pm}\big)$, $\lambda^\pm(\psi)= \lambda\big(L_{p,\psi}^{\pm}\big)$, and $\iota^\pm(\psi)=\iota(f,\psi)$. 
Combined with Theorem \ref{theorem_smallweights}, Theorem \ref{main1} is obtained from the following theorem by restricting to $p>2$, $\psi=1$, and $j=0$. Recall $e$ is the ramification index of $K$ and $\omega$ is the mod $p$ cyclotomic character. 

\begin{theorem}\label{main} Let $f\in S_k(\Gamma_1(N))$ be a newform with $a_p(f)=0$ and $k\geq p+2$. Suppose that $p>2$, or that $p=2$ and $e\leq 2$. Then for all $n\gg0$ we have
\begin{align}
\mu\big(\theta_{n,j}^{\psi}\big)&=\mu^{\star*}(\psi\omega^j) +\iota^*(\psi\omega^j)+\ord_p\textstyle \binom{k-2}{j}+\begin{cases}k-1 &\quad \text{if $p=2$ and $*=-$}\\
0&\quad \text{otherwise},\label{muthm}\\ 
\end{cases}
\\
\lambda(\theta_{n,j}^{\psi}) &=\lambda^{\star*}(\psi\omega^j)+(k-1)q_n-\iota^*(\psi\omega^j)(p^n-p^{n-1}) \label{lam1thm}\\
&=\lambda^{\star*}(\psi\omega^j)-\nu_k^{*}\\
&\quad  +\begin{cases}
aq_n+bq_n'+p^{n-1}&\text{if $k\not\equiv p+2\Mod (p^2-1)$}
\\
p^{n}& \text{if $k\equiv p+2\Mod (p^2-1)$ and $\lambda^{\star*}(\psi\omega^j)<\nu_k^*$}\\
p^{n-1}& \text{if $k\equiv p+2\Mod (p^2-1)$ and $\lambda^{\star*}(\psi\omega^j)\geq \nu_k^*$} 
\end{cases}  \label{lam2thm}
\end{align}
where $*=-$ if $n$ is even and $*=+$ if $n$ is odd, and the integers $a$ and $b$ are as defined in \eqref{abkdef}.
\end{theorem}

\begin{proof} By Theorem \ref{lifts} and equations \eqref{nliw1} and \eqref{nliw2} we have 
\begin{align*}
\mu(\theta_{n,j}^\psi)&=\mu\big(\pi_n(\eta^j(L_{p,\psi\omega^j}^{\star\e_{n+1}}\log_{k,n}^{\e_{n+1}}))\big)+\ord_p\textstyle \binom{k-2}{j}+\begin{cases}k-1 &\quad \text{if $p=2$ and $n$ is odd}\\
0&\quad \text{otherwise},\label{muthm}\\ 
\end{cases}
\\
\lambda(\theta_{n,j}^\psi)&=\lambda\big(\pi_n(\eta^j(L_{p,\psi\omega^j}^{\star\e_{n+1}}\log_{k,n}^{\e_{n+1}}))\big).
\end{align*}
The statement now follows from Corollary \ref{coro_Llog}.
\end{proof}

We refer the reader to Appendix \ref{tablessection} for examples illustrating the above theorem. 

\begin{remark}\label{remark_effective} \nf Let $n_0\geq 0$ be the smallest integer $n$ such that the $\lambda$-invariants of $\theta_{n,j}^\psi$ follow the pattern of Theorem \ref{main1}. At small weights $k\leq p+1$, we see from Theorem \ref{theorem_smallweights} that $n_0$ depends the signed invariants $\lambda^\pm(\psi\omega^j)$. Computations suggest this may also be true at higher weights. For example, in the weight $k=10$ block of Table \ref{table_p3}, we have $n_0=1$ for all modular forms except $\texttt{G0N256k10A}$, which has $n_0=5$. Note that the plus/minus $\lambda$-invariants of $\texttt{G0N256k10A}$ (7 and 9, respectively) are larger than the corresponding invariants (6 and 2) of the other modular forms in this block. One possible explanation for this is that the proof of Proposition \ref{lpl} requires showing certain inequalities (e.g., \eqref{ieq}) involving $\lambda^\pm$ hold for $n\gg0$. In particular, the larger the invariants $\lambda^\pm$, the larger one must take $n$ for these inequalities to hold.
\end{remark}

\section{Corollaries and conjectures}

\subsection{Signed invariants attached to congruent forms of weights 2 and $\pmb{p+1}$ }\label{pminvsection}

In this section, we prove Corollary \ref{coro_C}.
First, we recall the main properties of Sprung's $\sharp/\flat$ $p$-adic $L$-functions.
If $g$ is a $p$-non-ordinary eigen-cuspform of weight two and level not divisible by $p$, Sprung \cite{sprung17} constructed functions $L_{p,\psi}^{\sharp/\flat}(g)$ with $p$-adically bounded coefficients (hence well-defined Iwasawa invariants) which, like Pollack's signed $p$-adic $L$-functions, are  conjecturally related to the structure of the Pontryagin dual of certain signed Selmer groups attached to $g$ along the cyclotomic $\Z_p$-extension of $\Q$ (see \cite{sprung09} and \cite[\S 6.2]{LLZ0}).  If  $a_p(g)=0$, the $\sharp/\flat$-$p$-adic $L$-functions of Sprung agree with the $\pm$-$p$-adic $L$-functions of Pollack via $\sharp\leftrightarrow +$ and $\flat\leftrightarrow -$ (see \cite[Remark 6.16]{sprung09}). 
In what follows, we let $\overline{\rho_f}:G_\Q\rightarrow \GL(\overline{\F_p})$ denote the residual representation attached to an eigenform $f\in S_k(N,\e)$ by Deligne \cite{deligne69}. 

\begin{corollary}\label{coro_p1} Let $p>2$ and let $f\in S_{p+1}(\Gamma_0(N))$ be a newform with $a_p(f)=0$. Then there exists an eigenform $g\in S_2(\Gamma_0(N))$ with $\overline{\rho_f}\cong \overline{\rho_g}$ such that if $\mu\big(L_{p,\psi}^{\sharp}(g)\big) =\mu\big(L_{p,\psi}^{\flat}(g)\big)$ then: 
\begin{enumerate}
\item $\mu\big(L_{p,\psi}^+(f)\big)=\mu\big(L_{p,\psi}^+(f)\big)=0$ if and only if $\mu\big(L_{p,\psi}^{\sharp}(g)\big) =\mu\big(L_{p,\psi}^{\flat}(g)\big) =0$, and 
\item if either of the conditions in (1) hold then 
\begin{align*}
\lambda\big(L_{p,\psi}^+(f)\big)&= \lambda\big(L_{p,\psi}^\flat(g)\big)+p-1,\\
\lambda\big(L_{p,\psi}^-(f)\big)&=\lambda\big(L_{p,\psi}^\sharp(g)\big).
\end{align*}
\end{enumerate}
\end{corollary}
\begin{proof} Let $G_{\Q_p}=\Gal(\overline{\Q}_p/\Q_p)$, which we identify with the decomposition group corresponding to our fixed embedding $\overline{\Q}\hookrightarrow \overline{\Q_p}$. By \cite[Theorem 2.6]{FontaineEdixhoven92} (see also \cite[Remark 5.2.5]{PW}), since $f$ has weight $p+1$ we know that the representation $\overline{\rho_f}|_{G_{\Q_p}}$ is irreducible with Serre weight two (in the sense of \cite[Definition VII.1.7]{cornell2013modular}). 
It follows from \cite[Corollary 5.3]{PW} that there exists an eigenform $g\in S_2(\Gamma_0(N))$ with $\overline{\rho_f}\cong \overline{\rho_g}$ such that
\begin{equation}\label{mu_p1}
\text{$\mu(\theta_n^\psi(f))=0$ for $n\gg0$ if and only if $\mu^{+}(g,\psi)=\mu^{-}(g,\psi)=0$,}
\end{equation}
 and if either of these conditions hold then
\begin{equation}\label{lambda_p1}
\lambda(\theta_n^\psi(f))=p^n-p^{n-1}+q_{n-1}+\lambda^{\e_n}(g,\psi), \qquad n\gg0.
\end{equation}
Here $\mu^{+}(g,\psi):=\lim_{n\rightarrow \infty }\mu(\theta_{2n+1}^\psi(g))$ and $\lambda^{+}(g,\psi):=\lim_{n\rightarrow \infty}\lambda(\theta_{2n+1}^\psi(g))-q_{2n+1}$, and the `minus' counterparts are defined similarly as limits over even integers. By Theorem \ref{theorem_smallweights}, we have $\mu(\theta_n^\psi(f))=\mu(L_{p,\psi}^{-\e_n}(f))$ for $n\gg0$. By \cite[Corollary 8.9]{sprung17}, if the $\mu$-invariants of $L_{p,\psi}^{\sharp}(g)$ and $L_{p,\psi}^{\flat}(g)$ agree then $\mu^{\pm}(g,\psi)=\mu(L_{p,\psi}^{\sharp/\flat}(g))$ and $\lambda^{\pm}(g,\psi)=\lambda(L_{p,\psi}^{\sharp/\flat}(g))$. Combined with \eqref{mu_p1}, this proves part (1). 

For part (2), observe that equation \eqref{lambda_p1} can also be written in the form 
$$
\lambda(\theta_n^\psi(f))=pq_n+ \begin{cases} \lambda(L_{p,\psi}^\sharp(g)) \quad& \text{if $n\gg0$ is even,}\\
p-1+\lambda(L_{p,\psi}^\flat(g)) \quad& \text{if $n\gg0$ is odd.}
\end{cases}
$$
On the other hand, Theorem \ref{theorem_smallweights} implies that
$$
\lambda(\theta_n^\psi(f))=pq_n+\begin{cases}\lambda(L_{p,\psi}^{-}(f))&\quad \text{if $n\gg0$ is even}\\
\lambda(L_{p,\psi}^{+}(f))&\quad \text{if $n\gg0$ is odd.}
\end{cases}
$$ 
Combining these two expressions completes the proof.
\end{proof}

\begin{remark} \phantom{}\nf 
\begin{enumerate}
\item  The sign of the invariants $\lambda^{\pm}(g,\psi)$ and $\mu^{\pm}(g,\psi)$ defined in the above proof are opposite the signs of the invariants defined in \cite{sprung17}. We believe the choice of parity in \cite[Definition 8.8]{sprung17} is likely a typo: the proof of  \cite[Corollary 8.9]{sprung17} cites an unpublished preprint of Iovita, Pollack, and Greenberg, wherein the parity of $\lambda^{\pm}(g,\psi)$ and $\mu^{\pm}(g,\psi)$ are defined as in the above proof. Furthermore, using \cite[Definition 8.8]{sprung17} as stated, if $g$ corresponds to an elliptic curve $E$ with $a_p(E)=0$ then \cite[6.18]{pollack03} (or Theorem \ref{theorem_smallweights}) and \cite[Corollary 8.9]{sprung17} imply $\lambda(L_p^+(E))=\lambda(L_p^-(E))$, which is a contradiction as this certainly need not hold in general. 

\item In \cite{EPW}, Emerton, Pollack, and Weston show that as one varies across pairs of $p$-ordinary and $p$-stabilized newforms $f$ and $g$ for which $\overline{\rho_f}\cong\overline{\rho_g}$ is irreducible, the difference $\lambda(f)-\lambda(g)$ depends only on certain local terms coming from Euler factors at primes dividing the levels of $f$ and $g$. 
See \cite[Theorem 2]{EPW}. In particular, under their assumptions, this difference is independent of the weights of the modular forms. The above corollary shows that this need not be the case for signed $\lambda$-invariants in the $p$-non-ordinary case.
\end{enumerate}
\end{remark}

Table \ref{tablep1} below illustrates the behavior of this corollary at $p=3$. Each block of the table contains a pair of rational newforms $f\in S_4(\Gamma_0(N))$ and $g\in S_2(\Gamma_0(N))$ for which $a_3(f)=a_3(g)=0$ and $\overline{\rho_f}\cong \overline{\rho_g}$. By computing the Mazur-Tate elements attached to both modular forms and using Theorem \ref{theorem_smallweights} to estimate their signed Iwasawa invariants, we note that $\lambda(L_p^+(f))=\lambda(L_p^-(g))+2$ and $\lambda(L_p^-(f))=\lambda(L_p^+(g))$.

\begin{table}[H]
\begin{center}
\scalebox{1}{
\begin{tabular}{|c|c||c|c||c|c|c|c|c|c||c|}
\hline
$k$ 	& $N$ &$\lambda(L_p^+)$& $\lambda(L_p^-)$& 1 & 2 &3&4&5&6& Magma label\\ 
\hline
\hline
4&32&2&0&2&6&20&60&182&546&\texttt{G0N32k4A}\\
2&32&0&0&0&2&6&20&60&182&\texttt{G0N32k2A}\\
\hline
4&154&4&2&2&8&22&62&184&548&\texttt{G0N154k4D}\\ 
2&154&2&2&2&4&8&22&62&184&\texttt{G0N154k2C}\\
\hline
4&256&9&1&1&7&9&61&189&547&\texttt{G0N256k4B}\\
2&256&1&7&1&0&7&27&61&189&\texttt{G0N256k2A}\\
\hline
\end{tabular}
}
\caption{}
\label{tablep1}
\end{center}
\end{table}

\subsection{$p$-adic valuation of critical $L$-values} The goal of this section is to prove Corollary \ref{p1two}. 


\begin{lemma}\label{ordlog} Let $n\geq 1$. If $n$ is odd then 
$$
\ord_p\big(\log_{k}^+(\gamma^j\zeta_n-1)\big)=(k-1)\bigg(\frac{q_{n}}{p^{n}-p^{n-1}} -\frac{n+1}{2}\bigg),
$$
and if $n$ is even then 
$$
\ord_p\big(\log_{k}^-(\gamma^j\zeta_n-1)\big)=
(k-1)\bigg(\frac{q_{n}}{p^{n}-p^{n-1}} -\frac{n+2}{2}\bigg).
$$
\end{lemma}
\begin{proof}
Equation \eqref{logpm} and Lemma \ref{units}  give 
$$
\ord_p\log_k^{\e_{n+1}}(\gamma^j\zeta_n-1)=-(k-1)(1+\delta_n^{\e_{n+1}})+\ord_p \log_{k,n}^{\e_{n+1}}(\gamma^j\zeta_n-1).
$$
The result now follows from the definition of $\log_{k,n}^{\pm}$ and Lemma \ref{cyclo-j}.
\end{proof}

\begin{corollary}\label{ordLval} Let $f\in S_k(\Gamma_1(N))$ be a  newform with $a_p(f)=0$ and let $j\in \{0,\dots, k-2\}$. If $\chi$ is a character on $\Z_p^\times$ of order $p^n$ then for $n\gg0$ we have
\begin{equation}\label{pvalcomp}
\ord_p\bigg(\frac{j!p^{j(n+1)}\tau(\chi)}{\Omega_f^\pm(-2\pi i)^j}L(f_{\chi^{-1}},j+1)\bigg)=\mu(L_{p,\psi\omega^j}^{\star\e_{n+1}}(f))+\frac{(k-1)q_n+\lambda(L_{p,\psi\omega^j}^{\star\e_{n+1}}(f))}{p^n-p^{n-1}}
\end{equation}
where $\pm=\sgn\chi(-1)(-1)^{k-j}$, $\tau(\chi)$ is a Gauss sum, and $\psi$ is the tame part of $\chi$. In particular, if $p>2$ (or $p=2$ and $e\leq 2$) then for all $n\gg0$ we have 
\begin{equation}\label{pvalcomp1}
\ord_p\bigg(C^{\e_n}\binom{k-2}{j}\frac{j!p^{j(n+1)}\tau(\chi)}{\Omega_f^\pm(-2\pi i)^j}L(f_{\chi^{-1}},j+1)\bigg)
=\mu\big(\theta_{n,j}^\psi(f)\big)+\frac{\lambda\big(\theta_{n,j}^\psi(f)\big)}{p^n-p^{n-1}},
\end{equation}
where $C^\pm$ is as in Theorem \ref{lifts}. 
\end{corollary}

\begin{proof} 
By the interpolation property of the $p$-adic $L$-function (see \cite[Proposition 14]{MTT} or \cite[Proposition 2.11]{pollack03}) it suffices to compute the valuation of $\alpha^{N}L_p(f,\alpha,x^j\chi)$. (Here we are using our convention that $N=n+1$ if $p>2$ and $N=n+2$ if $p=2$.)
Writing $\chi=\psi\chi_{\zeta_n}$, where $\zeta_n=\chi(\gamma)$ is a primitive $p^n$th root of unity and $\psi$ is the tame part of $\chi$, we can decompose $x^j\chi=\psi\omega^j\chi_{\gamma^j\zeta_n}$ into its tame and wild parts (see \eqref{decompw}). By Theorem \ref{theorem_pollackdecomp} and \cite[Corollary 4.2]{pollack03}, we have
\begin{align}\label{dc1}\begin{split}
\alpha^{N}L_p(f,\alpha,x^j\chi)&=\alpha^{N}L_p(f,\alpha,\psi\omega^j,\gamma^j\zeta_n-1)\\
&=\begin{cases}\alpha^{N+1} L_{p,\psi\omega^j}^-(f,\gamma^j\zeta_n-1)\log_{k}^{-\star}(\gamma^j\zeta_n-1) \quad&\text{$N$ odd,}\\
\alpha^{N}L_{p,\psi\omega^j}^+(f,\gamma^j\zeta_n-1)\log_{k}^{\star}(\gamma^j\zeta_n-1) \quad&\text{$N$ even.}
\end{cases}
\end{split}
\end{align}

Let $\lambda^\pm$ and $\mu^\pm$ be the Iwasawa invariants of $L_{p,\psi\omega^j}^\pm(f,T)$. Taking $n\gg0$ so that $\lambda^\pm<(p^n-p^{n-1})/e$, Lemma \ref{mulamfromval} implies 
$$
\ord_pL_{p,\psi\omega^j}^\pm(f,\gamma^j\zeta_n-1)=\mu^\pm+\frac{\lambda^\pm}{p^n-p^{n-1}}.
$$
Equation \eqref{pvalcomp} now follows from \eqref{dc1}, Lemma \ref{ordlog}, and the fact that $\ord_p(\alpha)=(k-1)/2$. 
Equation \eqref{pvalcomp1} is then a direct consequence of \eqref{pvalcomp} and Theorems \ref{theorem_smallweights} and \ref{main}.
\end{proof}

\begin{remark} \phantom{}\nf
\begin{enumerate}
\item Corollary \ref{ordLval} generalizes \cite[Proposition 6.9]{pollack03} to weights $k>2$. See also \cite[Theorem 8.5]{sprung17} for an analogous computation in the $p$-non-ordinary case at weight $k=2$ with $a_p\neq 0$.
\item The proof of Corollary \ref{ordLval} shows that equation \eqref{pvalcomp} holds for all even $n\geq n_0^-$ and all odd $n\geq n_0^+$, where $n_0^-$ (resp., $n_0^+$) is the smallest positive even (resp., odd) integer for which $\lambda^{-\star}<(p^n-p^{n-1})/e$ (resp., $\lambda^{+\star}<(p^n-p^{n-1})/e$). Equation \eqref{pvalcomp1} can also be made effective at weights $k\leq p+1$, where the lower bounds for even/odd $n$ are obtained by taking the maximum of the bounds of the previous sentence and those appearing Theorem \ref{theorem_smallweights}. 
\end{enumerate}
 \end{remark}

\subsection{Two conjectures}
\subsubsection{The size of signed $\lambda$-invariants when $k\equiv p+2$ mod $(p^2-1)$.}

We resume the notation of \S\ref{section_largeweights} and assume $p>2$, $\psi=1$, and $j=0$ for simplicity. At weights $k\equiv p+2\Mod (p^2-1)$, Theorem \ref{main} asserts that the $\lambda$-invariants of Mazur-Tate elements can exhibit either $O(p^n)$ or $O(p^{n-1})$ growth depending on the size of $\lambda^\pm:=\lambda(L_p^\pm(f))$. In particular, if it were true that $\lambda^+<\nu^+$ and $\lambda^-\geq \nu^-$ then Theorem \ref{main} would imply
$$
\lambda(\theta_n(f))=\begin{cases}
O(p^n)&\text{for all odd $n\gg0,$}\\
O(p^{n-1})&\text{for all even $n\gg0$}.
\end{cases}
$$
Curiously, we have found no examples of this type of behavior. See for instance the data in Tables \ref{table_p3} and \ref{table_p5} at weights $k=p+2$, where we see precisely one of $O(p^n)$ or $O(p^{n-1})$ growth for \emph{all} $n\gg0$ (irrespective of parity).  This suggests that if one signed $\lambda$-invariant  is small (or large) relative to $\nu^\pm$, then so is the other, motivating the following:

\begin{conjecture} \it Let $f$ be as in Theorem \ref{main} and suppose $k \equiv p+2\Mod (p^2-1)$. Then $\lambda^+<\nu^{+}$ if and only if $\lambda^{-}<\nu^{-}$. 
\end{conjecture}

\subsubsection{$\lambda$-invariants of $p$-congruent modular forms} 

Let $f$ and $g$ be cuspidal newforms of level $\Gamma_1(N)$, $p\nmid N$, and weights $k_f$ and $k_g$, respectively. Assume that the residual representations $\overline{\rho_f}$ and $\overline{\rho_g}$ are isomorphic. When $k_g=2$, Pollack and Weston \cite{PW} use a version of mod $p$ multiplicity one to relate (under some assumptions) the modular symbols attached to $f$ and $g$ and ultimately  deduce a congruence relation between the corresponding Mazur-Tate elements. This congruence has the form 
$$
 \varpi^{-a^\pm}\theta_n(f)\equiv \cor_{n-1}^n \theta_{n-1}(g) \Mod \varpi \Oo[\Gamma_n]
$$
for some nonnegative integers $a^+$ and $a^-$ (depending on $f$ and the parity of $n$), from which one can deduce 
\begin{equation}\label{lambdaPW}
\lambda(\theta_n(f))=p^n-p^{n-1}+\lambda(\theta_{n-1}(g)), \qquad n\gg0.
\end{equation}
See \cite[Theorems 5.1 and 6.1]{PW} for detailed statements.


In the case $k_g>2$, if we assume $a_p(f)=a_p(g)=0$ and $k_f\equiv k_g\Mod (p^2-1)$ (in addition to the assumption $\overline{\rho_f}\cong \overline{\rho_g}$), computations suggest (see Table \ref{table_cong2}) that 
\begin{equation}\label{lambdaconj}
\lambda(\theta_n(f))=\lambda(\theta_n(g)), \qquad n\gg0. 
\end{equation}
Indeed, computer calculations indicate that under the stated assumptions one can choose periods $\Omega_f^\pm$ and $\Omega_g^\pm$ such that
$$
\varpi^{-\mu^\pm(f)} \theta_n(f)\equiv \varpi^{-\mu^\pm(g)}\theta_n(g) \Mod \varpi \Oo[\Gamma_n],
$$
from which the above equality between the $\lambda$-invariants of Mazur-Tate elements would follow. In particular, if \eqref{lambdaconj} holds then Theorem \ref{main1}
would force a relation between the signed $\lambda$-invariants for $f$ and $g$, summarized in the following conjecture. See Table \ref{table_cong2} for examples.

\begin{conjecture}\label{conj_congruence} Let $f\in S_{k_f}(\Gamma_1(N))$ and $g\in S_{k_g}(\Gamma_1(N))$ be newforms with $a_p(f)=a_p(g)=0$. Suppose that 
\begin{enumerate}
\item[(i)] $k_g>2$,
\item[(ii)]  $k_f\equiv k_g\Mod (p^2-1)$, and
\item[(iii)] $\overline{\rho_f}\cong \overline{\rho_g}$.
\end{enumerate}
 If $k_g\not\equiv p+2\Mod (p^2-1)$, or if $k_g\equiv p+2\Mod (p^2-1)$ and $\lambda(L_p^\pm(\dagger))<\nu^\pm_{k_{\dagger}}$ or $\lambda(L_p^\pm(\dagger))\geq\nu^\pm_{k_{\dagger}}$ for both $\dagger\in \{f,g\}$, then 
 \begin{align*}
   \lambda(L_p^+(f))- \lambda(L_p^+(g))&=\frac{p(k_f-k_g)}{p+1},\\
 \lambda(L_p^-(f))- \lambda(L_p^-(g))&=\frac{k_f-k_g}{p+1}.
 \end{align*}
 \end{conjecture}

\begin{remark} \nf Relations between signed Iwasawa invariants for $p$-congruent modular forms are known in the equal-weight case -- 
see \cite{HL19} and \cite{CL}. For example, when $2<k_f=k_g<p+1$ and  the extension of $\Q_p$ generated by the Fourier coefficients of $f$ and $g$ is unramified, Corpuz and Lei \cite{CL} show that $\mu(L_p^{\sharp/\flat}(f))=0$ if and only if $\mu(L_p^{\sharp/\flat}(g))=0$, and if either condition holds then $\lambda(L_p^*(f))=\lambda(L_p^*(g))$ for $*\in \{\sharp,\flat\}$, assuming that $f$ and $g$ have the same level. This is consistent with the above conjecture. 
\end{remark}

\begin{table*}[htp]
\begin{center}
\scalebox{1}{
\begin{tabular}{|c|c||c|c||c|c|c|c|c|c|c|}
\hline
$k$ 	& $N$ &$\lambda(L_p^+)$& $\lambda(L_p^-)$ & 1 & 2 &3&4&5&6& label\\ \hline\hline
2&32&0&0&0&2&6&20&60&182&\texttt{G0N32k2A}\\
10&32&6&2&2&8&24&74&222&668&\texttt{G0N32k10A}\\
18 &32&12&4&2&8&24&74&222&668  &\texttt{G0N32k18A}\\ 
\hline
3&7&1&0&1&4&13&40&121&364&\texttt{G1N7k3A}\\
11&7&7&2&1&4&13&40&121&364&\texttt{G1N7k11B}\\
19&7&13 &  4 &1&4&13&40&121&364&\texttt{G1N7k19B} \\
\hline
4&32&2&0&2&6&20&60&182&546&\texttt{G0N32k4A}\\
12&32&8&2&2&6&20&60&182&546&\texttt{G0N32k12A}\\ 
  20&32&14&4&2&6&20&60&182&546&\texttt{G0N32k20A}\\
  \hline
  5&4&2&0&2&8&26&80&242&728&\texttt{G1N4k5A}\\
13&4&8&2&2&8&26&80&242&728&\texttt{G1N4k13A}\\
21&4&14&4&2&8&26&80&242&728&\texttt{G1N4k21A}\\
\hline
\end{tabular}
}
\caption{Invariants $\lambda(\theta_n)$, $1\leq n\leq 6$, attached to rational newforms of level $\Gamma_0(N)$ with $a_3=0$. The signed $\lambda$-invariants are those predicted from the Mazur-Tate elements by Theorems \ref{theorem_smallweights} and \ref{main}. Modular forms in the same block are congruent modulo $p=3$ and, at weights $k>2$, have signed invariants satisfying Conjecture \ref{conj_congruence}.}
\label{table_cong2}
\end{center}
\end{table*}

\appendix
\section{Lemmas}\label{appendix_lemmas}

\begin{lemma}\label{additive} Let $\mu$ be any finitely additive function on subsets of $\Z_p^\times$ and let $L_n(T)=\sum_{a\in (\Z/p^{N}\Z)^\times} \mu(a+p^{N}\Z_p)(1+T)^{\log_\gamma(a)}$. For all $0\leq m\leq n$ we have 
$L_n(\zeta_m-1)=L_m(\zeta_m-1)$.
\end{lemma}
\begin{proof} We prove the case where $p$ is odd, the case $p=2$ is similar. Since $a\equiv b\Mod p^{m+1}$ implies $\log_{\gamma}(a)\equiv\log_\gamma(b)\Mod p^m$ and 
$$
\bigcup_{\substack{a\in (\Z/p^{n+1}\Z)^\times\\ a\equiv b\Mod p^{m+1}}}a+p^{n+1}\Z_p=b+p^{m+1}\Z_p,
$$
where the union is disjoint, we have
\begin{align*}
L_{n}(\zeta_m-1) &=\sum_{a\in (\Z/p^{n+1}\Z)^\times} \mu(a+p^{n+1}\Z_p)\zeta_m^{\log_\gamma(a)}\\
&=\sum_{b\in (\Z/p^{m+1}\Z)^\times} \sum_{\substack{a\in (\Z/p^{n+1}\Z)^\times\\ a\equiv b\Mod p^{m+1}}}\mu(a+p^{n+1}\Z_p)\zeta_m^{\log_\gamma(b)} \\
&=\sum_{b\in (\Z/p^{m+1}\Z)^\times} \mu(b+p^{m+1}\Z_p)\zeta_m^{\log_{\gamma}(b)}\\
&= L_{m}(\zeta_m-1).
\end{align*}
\end{proof}

\begin{lemma} \label{cyclo-j} For any $u\in 1+2p\Z_p$ and $n\geq 0$ we have 
$$
\ord_p\big(\Phi_{m}(u\zeta_n)\big)=
\begin{cases}
\frac{p^m-p^{m-1}}{p^{n}-p^{n-1}} &\quad\text{if $m<n$,}\\
1 &\quad\text{if $m>n$,}
\end{cases}
$$
\end{lemma}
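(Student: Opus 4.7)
The plan is to use the multiplicative identity $\Phi_m(T) = (T^{p^m} - 1)/(T^{p^{m-1}} - 1)$, which reduces the problem to evaluating $\ord_p(u^{p^k}\zeta_n^{p^k} - 1)$ for $k = m$ and $k = m - 1$ and then subtracting. The second ingredient is the classical fact that for any $u \in 1 + 2p\Z_p$ one has $\ord_p(u^{p^k} - 1) = \ord_p(u - 1) + k \geq k + 1$; this is a consequence of the standard behavior of the $p$-power map on principal units, and the hypothesis $u \in 1 + 2p\Z_p$ (rather than just $1 + p\Z_p$) is precisely what makes this formula valid uniformly when $p = 2$.

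For the case $m < n$, both $k = m$ and $k = m - 1$ satisfy $k < n$, so $\zeta_n^{p^k}$ is a primitive $p^{n-k}$-th root of unity, and therefore
\[
\ord_p(\zeta_n^{p^k} - 1) = \frac{1}{p^{n-k-1}(p-1)}.
\]
Writing $u^{p^k} = 1 + \epsilon_k$ and expanding
\[
u^{p^k}\zeta_n^{p^k} - 1 = (\zeta_n^{p^k} - 1) + \epsilon_k \zeta_n^{p^k},
\]
the two summands have strictly unequal $p$-adic valuations: the first is at most $1/(p-1)$, while the second is at least $k + 1$ for odd $p$ (and at least $k + 2$ for $p = 2$). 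The ultrametric inequality then yields $\ord_p(u^{p^k}\zeta_n^{p^k} - 1) = 1/(p^{n-k-1}(p-1))$, and subtracting the $k = m - 1$ value from the $k = m$ value gives
\[
\ord_p(\Phi_m(u\zeta_n)) = \frac{1}{p^{n-m-1}(p-1)} - \frac{1}{p^{n-m}(p-1)} = \frac{1}{p^{n-m}} = \frac{p^m - p^{m-1}}{p^n - p^{n-1}}.
\]
For $m > n$, we have $m - 1 \geq n$, so $\zeta_n^{p^{m-1}} = \zeta_n^{p^m} = 1$ and the two factors collapse to $u^{p^m} - 1$ and $u^{p^{m-1}} - 1$; the lemma above gives the difference of their valuations as $(v + m) - (v + m - 1) = 1$, where $v = \ord_p(u - 1)$.

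The argument is almost purely ultrametric bookkeeping, so I do not anticipate any deep obstacle. The only delicate point is verifying that the two terms in the expansion of $u^{p^k}\zeta_n^{p^k} - 1$ really do have distinct valuations at $p = 2$, where $1/(p - 1) = 1$ coincides with the generic lower bound for $\ord_p(\epsilon_k)$; here the condition $u \in 1 + 4\Z_2$ supplies exactly the extra unit of valuation needed for strict inequality, which is why the excluded boundary case $m = n$ is genuinely different (there the two valuations can collide, and the answer depends on $\ord_p(u-1)$ itself).
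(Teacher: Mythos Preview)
Your proof is correct and, for the case $m<n$, takes essentially the same route as the paper: both arguments use the quotient identity $\Phi_m(T)=(T^{p^m}-1)/(T^{p^{m-1}}-1)$ and then an ultrametric comparison between the ``root-of-unity part'' (valuation $\leq 1/(p-1)$) and the ``principal-unit part'' (valuation $\geq k+1$, or $\geq k+2$ when $p=2$). The paper rearranges the fraction as $u^{(p-1)p^{m-1}}\cdot(\zeta_{n-m}-u^{-p^m})/(\zeta_{n-m+1}-u^{-p^{m-1}})$ before comparing valuations, but this is cosmetic.

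For $m>n$ the two arguments diverge. You keep the quotient form, observe $\zeta_n^{p^{m-1}}=\zeta_n^{p^m}=1$, and invoke the standard identity $\ord_p(u^{p^k}-1)=\ord_p(u-1)+k$ on $1+2p\Z_p$ to get the difference $1$. The paper instead abandons the quotient and evaluates the sum $\Phi_m(u\zeta_n)=\sum_{t=0}^{p-1}(u\zeta_n)^{tp^{m-1}}$ directly, noting each summand lies in $1+2p^m\Z_p$, so the total is $\equiv p\pmod{2p^m}$. Your approach is more uniform (one identity handles both cases) and makes the role of the hypothesis $u\in 1+2p\Z_p$ at $p=2$ more transparent; the paper's sum evaluation is slightly more hands-on but avoids needing to state the principal-units lemma separately. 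Both are short and elementary.
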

\begin{proof} First, assume $1\leq m<n$. Observe that 
$$
\Phi_{m}(u\zeta_n)=\frac{(u\zeta_n)^{p^m}-1}{(u\zeta_n)^{p^{m-1}}-1}=u^{(p-1)p^{m-1}}\bigg(\frac{\zeta_{n-m}-u^{-p^m}}{\zeta_{n-m+1}-u^{-p^{m-1}}}\bigg).
$$
Note that $u^{-p^m}\in 1+2p^{m+1}\Z_p$, so $u^{-p^m}=1+2p^{m+1}u'$ for some $u'\in \Z_p$. Hence,
\begin{align*}
\ord_p(\zeta_{n-m}-u^{-p^m})&=\ord_p(\zeta_{n-m}-1-2p^{m+1}u')\\
&=\min\big(\ord_p(\zeta_{n-m}-1),\ord_p(2p^{m+1}u')\big)\\
&= \frac{1}{p^{n-m}-p^{n-m-1}},
\end{align*}
and the claimed valuation of $\Phi_{m}(u\zeta_n)$ follows.
Now suppose $m>n$. Then
$$
\Phi_m(u\zeta_n)=1+\sum_{t=1}^{p-1}u^{tp^{m-1}}
\equiv 1+(p-1)\Mod 2p^m
\equiv p \Mod 2p^m,
$$
because $u^{tp^{m-1}}\in 1+2p^{m}\Z_p$. 
Thus $\Phi_m(u\zeta_n)\in p+2p^m\Z_p$ and it follows that $\ord_p\big(\Phi_m(u\zeta_n)\big)=1$ for $m\geq  2$. If $m=1$ (hence $n=0$) one can check directly that $\Phi_1(u)=p\Mod p^2$. 
\end{proof}

\begin{lemma}\label{basiclem} Let $N\geq 0$. 

(1) $\ord_p c_N^{(i)} \geq 1$ for all $i\leq p^n-1$. 

(2) If $N+1\leq i\leq p^n-1$ then $\ord_p c_N^{(i)} =n-\ord_p(i-N)$. 

(3) If $i\leq N$ then $c_N^{(i)} =\sum_{j=0}^{i-1} c_{N-1-j}^{(p^n-1)}c_0^{(i-j)}$.

\end{lemma}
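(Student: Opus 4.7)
The three parts will be proved in the order (1), (3), (2), each by induction on $N$, with (2) being the technically substantial claim. The key input for the base case (of both (1) and (2)) is the classical formula $\ord_p\binom{p^n}{i}=n-\ord_p(i)$ for $1\le i\le p^n-1$, which I would derive from $\binom{p^n}{i}=\tfrac{p^n}{i}\binom{p^n-1}{i-1}$ together with the observation that $\binom{p^n-1}{i-1}$ is a $p$-adic unit (the identity $p^n-1-k\equiv -(k+1)\pmod{p^n}$ shows that every numerator factor has the same $p$-valuation as the corresponding denominator factor).

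For (1), I will induct on $N$. The base case $N=0$ is the formula above, which gives $\ord_p c_0^{(i)}\ge 1$ since $\ord_p(i)\le n-1$. The inductive step is immediate from the recursion $c_N^{(i)}=c_{N-1}^{(i-1)}+c_{N-1}^{(p^n-1)}c_0^{(i)}$, since both summands have $\ord_p\ge 1$ by the inductive hypothesis and the base case applied to $c_0^{(i)}$.

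For (3), I induct on $N$, with $i\le N$. The base case $N=0$ forces $i\le 0$, so both sides vanish. For $N\ge 1$ and $1\le i\le N$, the recursion gives $c_N^{(i)}=c_{N-1}^{(i-1)}+c_{N-1}^{(p^n-1)}c_0^{(i)}$. Since $i-1\le N-1$, the inductive hypothesis applies to $c_{N-1}^{(i-1)}$; expanding it and shifting the summation index $j\mapsto j+1$ yields the desired formula for $c_N^{(i)}$ (with the term $c_{N-1}^{(p^n-1)}c_0^{(i)}$ supplying the $j=0$ summand).

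For (2), which is the heart of the lemma, I again induct on $N$. The base case is the binomial formula above. For the inductive step, assume $N\ge 1$ and $N+1\le i\le p^n-1$; both $c_{N-1}^{(i-1)}$ (with $i-1\ge N=(N-1)+1$) and $c_{N-1}^{(p^n-1)}$ (with $p^n-1\ge N$, since $N\le p^n-2$) satisfy the inductive hypothesis. This gives
\[
\ord_p c_{N-1}^{(i-1)}=n-\ord_p(i-N),\qquad \ord_p c_{N-1}^{(p^n-1)}=n-\ord_p(p^n-N),
\]
and $\ord_p c_0^{(i)}=n-\ord_p(i)$. The claim will follow by showing that the correction term $c_{N-1}^{(p^n-1)}c_0^{(i)}$ has strictly greater $p$-adic valuation than $c_{N-1}^{(i-1)}$, i.e.\ that
\[
n+\ord_p(i-N)>\ord_p(p^n-N)+\ord_p(i).
\]
Since $1\le N\le p^n-2$ one has $\ord_p(p^n-N)=\ord_p(N)\le n-1$, and likewise $\ord_p(i)\le n-1$. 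The hard part is verifying the strict inequality in the two cases: when $\ord_p(N)\ne \ord_p(i)$, one has $\ord_p(i-N)=\min(\ord_p N,\ord_p i)$, and the difference LHS$-$RHS equals $n-\max(\ord_p N,\ord_p i)\ge 1$; when $\ord_p(N)=\ord_p(i)=:k$, one has $\ord_p(i-N)\ge k$ and the difference is at least $n-k\ge 1$. This case analysis is the only place where the specific structure of the $p$-valuation is used, and it is the main (though mild) obstacle in the proof.
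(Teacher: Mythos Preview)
Your proof is correct and follows essentially the same approach as the paper: induction on $N$ via the recursion \eqref{recursion} for all three parts, with the binomial identity $\ord_p\binom{p^n}{i}=n-\ord_p(i)$ as the base case. Your argument for (2) in fact supplies the case analysis that the paper leaves as ``one can check,'' and your inductive phrasing of (3) is just the iterated unfolding the paper invokes; otherwise the proofs are the same.
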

\begin{proof}\phantom{}
\begin{enumerate}
\item  If $i\leq 0$ then  $c_N^{(i)} = 0$ and we are done. If $1\leq i\leq p^n-1$ and $N=0$, the result follows from the identity 
\begin{equation}\label{vbinom}
 \ord_p\binom{p^n}{i}=n-\ord_p(i).
\end{equation}
Now suppose (1) holds for some $N\geq 0$ and all $1\leq i\leq p^n-1$. Then \eqref{recursion} yields
$$
\ord_p c_{N+1}^{(i)}\geq \min(\ord_p   c_N^{(i-1)},\ord_p   c_N^{(p^n-1)}+\ord_pc_0^{(i)})\geq 1.
$$
\item  If $N=0$ then the result follows from \eqref{vbinom}. Suppose it holds for some $N \geq 0$. Let $i\geq N+2$. Then $\ord_p c_{N}^{(i-1)}=n-\ord_p(i-1-N)$ and $\ord_p(c_{N}^{(p^n-1)}c_0^{(i)})=2n-\ord_p(p^n-1-N)-\ord_pi$. Since $i\leq p^n-1$ one can check that $n-\ord_p(i-1-N)<2n-\ord_p(p^n-1-N)-\ord_pi$. Therefore, \eqref{recursion} yields
$$
\ord_p c_{N+1}^{(i)}=\min\big(\ord_p   c_N^{(i-1)},\ord_p   (c_N^{(p^n-1)}c_0^{(i)})\big)=n-\ord_p(i-(N+1)). 
$$
\item Apply \eqref{recursion} $i$ times.
\end{enumerate}
\end{proof}

\begin{lemma}\label{lemma1} Let $F=\sum a_iT^i\in \Lambda_K$ and suppose $\mu(F)=0$. Let $s_i=\sum_{N\geq 0}a_{p^n+N}c_{N}^{(i)}$.  If 
\begin{enumerate}[label=(\roman*)]
\item $\lambda(F)=p^n$, or
\item $\lambda(F)\geq p^n+1$ and $\ord_p a_i\geq v_i(F)$ for all $p^n \leq i\leq \lambda-1$,
\end{enumerate}
then 
$$
\ord_ps_i \begin{cases}> Q(F)+1& \text{if $0\leq i\leq R(F)+ p^{n-1}-1,$}\\ 
= Q(F)+1 & \text{if $i=R(F)+p^{n-1},$}\\
\geq Q(F)+1& \text{if $R(F)+p^{n-1}+1\leq i\leq p^n-1$.}
\end{cases}
$$
\end{lemma}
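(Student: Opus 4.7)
The plan is to isolate the single ``leading'' term of the series $s_i = \sum_{N\geq 0} a_{p^n+N} c_N^{(i)}$ at $N = M := \lambda - p^n$, and to show via Proposition~\ref{lbounds} that every other summand has strictly larger $p$-adic valuation than what is required. Since $Q(M) = Q(F)$ and $R(M) = R(F)$ by construction, and $a_\lambda$ is a $p$-adic unit (as $\mu(F)=0$), Proposition~\ref{lbounds} applied at $N=M$ shows that $a_\lambda c_M^{(i)}$ alone already realizes all three valuation bounds in the conclusion, with exact equality to $Q(F)+1$ precisely at $i = R(F)+p^{n-1}$. By the ultrametric triangle inequality, it therefore suffices to prove that every term $a_{p^n+N} c_N^{(i)}$ with $N \neq M$ has $p$-adic valuation strictly greater than $Q(F)+1$ for $i \leq R(F)+p^{n-1}$ (and at least $Q(F)+1$ for $i > R(F)+p^{n-1}$).

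The indices $N > M$ are handled uniformly. One has $\ord_p a_{p^n+N} \geq 0$, and one splits on whether $Q(N)>Q(F)$ (which by Proposition~\ref{lbounds} forces $\ord_p c_N^{(i)} \geq Q(F)+2$ for all $i$) or $Q(N)=Q(F)$ with $R(N)>R(F)$ (in which case every $i \leq R(F)+p^{n-1}$ lies strictly below $R(N)+p^{n-1}$, so again $\ord_p c_N^{(i)} \geq Q(N)+2 = Q(F)+2$). Under hypothesis (i) we have $M=0$, and these are the only remaining indices, so that case is complete.

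The substantive case is hypothesis (ii) with $0 \leq N < M$. Here one invokes $\ord_p a_{p^n+N} \geq v_{p^n+N}(F)$, which equals $Q(F) - Q(N) + 1/e$ if $R(N) \leq R(F)$ and $Q(F) - Q(N)$ if $R(N) > R(F)$. A case analysis on the position of $i$ relative to $R(N)+p^{n-1}$, combined with Proposition~\ref{lbounds}, then shows that in each subcase relevant to the first two ranges of $i$ the total valuation is at least $Q(F)+1+1/e$ or $Q(F)+2$, and in the third range it is at least $Q(F)+1$. Assembling these bounds via the ultrametric inequality yields the claimed equality at $i = R(F)+p^{n-1}$ and the required strict inequality for smaller $i$.

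The main obstacle is not conceptual but bookkeeping: the definition of $v_i(F,n)$ is calibrated so that the ramification term $1/e$---present precisely when $R(i-p^n) \leq R(F)$---supplies exactly the slack needed to upgrade the nonstrict estimate of Proposition~\ref{lbounds} to a strict inequality in the borderline subcase $i = R(N)+p^{n-1}$, where the contribution from $c_N^{(i)}$ drops from $Q(N)+2$ to $Q(N)+1$. Verifying this calibration in each of the six or so subcases is tedious but elementary; the key insight that makes the argument work is that $N = M$ is the \emph{unique} index at which both the hypothesis on $a_{p^n+N}$ and the estimate for $c_N^{(i)}$ are simultaneously tight.
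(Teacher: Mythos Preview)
Your proposal is correct and follows essentially the same approach as the paper: isolate the dominant term at $N=\lambda-p^n$ (the paper calls it $N_F$), use Proposition~\ref{lbounds} together with $\ord_p a_\lambda=0$ to read off the exact valuation of that term, and then verify via the same case split ($N>N_F$ versus $N<N_F$, with subcases on $R(N)$ versus $R(F)$) that all other summands have valuation strictly exceeding $Q(F)+1$ in the range $i\leq R(F)+p^{n-1}$ and at least $Q(F)+1$ beyond. Your remark that the $1/e$ term in $v_i(F)$ is calibrated precisely to handle the borderline $i=R(N)+p^{n-1}$ when $R(N)\leq R(F)$ is a correct and illuminating observation that the paper leaves implicit.
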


\begin{proof}  Let $\lambda=\lambda(F)$, $\mu=\mu(F)$, and $N_F=\lambda-p^n$. Consider the following claim. 

\noindent \bf Claim. \it If $N\neq N_F$ then 
 \begin{enumerate}[label=(\theenumi)]
\item $\ord_p a_{p^n+N}c_N^{(i)}> Q(F)+1$ for all $0\leq i\leq R(F)+p^{n-1},$
\item $\ord_p a_{p^n+N}c_N^{(i)}\geq Q(F)+1$ for all $R(F)+p^{n-1}+1\leq i\leq p^n-1.$ 
\end{enumerate}\nf

\noindent Assuming the claim, the lemma is proved as follows. 
 By Proposition \ref{lbounds} and the fact that $a_{\lambda}$ is a unit (we are assuming $\mu=0$) we know 
\begin{equation}\label{alamb}
\ord_p a_{\lambda}c_{N_F}^{(i)}\begin{cases}\geq Q(F)+2& \text{if $0\leq i\leq R(F)+ p^{n-1}-1$}\\ 
= Q(F)+1 & \text{if $i=R(F)+p^{n-1}$}\\
\geq Q(F)+1& \text{if $R(F)+p^{n-1}+1\leq i\leq p^n-1$.}
\end{cases}
\end{equation}
Thus, 
$$
\ord_p s_{R(F)+p^{n-1}}= \min \bigg(\ord_p a_{\lambda}c_{N_F}^{(R(F)+p^{n-1})}, \ord_p\sum_{N\neq N_F}a_{p^n+N}c_N^{(R(F)+p^{n-1})} \bigg)=Q(F)+1,
$$
and for $i\neq R(F)+p^{n-1}$, 
$$
\ord_p s_i\geq \min_{N\geq 0}\big(\ord_p a_{p^n+N}c_N^{(i)}\big) \begin{cases} >Q(F)+1& \text{if $0\leq i\leq R(F)+ p^{n-1}-1,$}\\ 
\geq Q(F)+1& \text{if $R(F)+p^{n-1}+1\leq i\leq p^n-1$,}
\end{cases}
$$

\vspace{1mm}

\noindent \it Proof of the Claim. \nf If $N\geq N_F+1$ then $Q(N)\geq Q(F)$ and the claim follows from Proposition \ref{lbounds}.
Now suppose $N\leq N_F-1$ (hence $\lambda> p^n$). Then our hypothesis on the valuations of $a_i$ together with Proposition \ref{lbounds} yields the following for all $0\leq i\leq p^n-1$:
\begin{align*}
\ord_pa_{p^n+N}c_N^{(i)} &=\ord_pa_{p^n+N}+\ord_p c_N^{(i)}\\
&\geq \bigg(Q(F)-Q(N)+
\left\{\!\begin{aligned}
&1/e &&\text{if $R(N)\leq R(F)$ } \\[1ex]
&0 &&\text{ if $R(N)>R(F)$}\\[1ex] 
\end{aligned}\right\} 
\bigg)+\bigg(Q(N)+1\bigg)\\
&=Q(F)+1+\begin{cases}
1/e& \text{if $R(N)\leq R(F)$} \\
0& \text{if $R(N)>R(F)$}
\end{cases}
\end{align*}
It remains to show part (1) of the claim when $R(N)>R(F)$, but this follows directly from (the first inequality in) Proposition \ref{lbounds}. 
\end{proof}

\begin{lemma}\label{lemv} Let $F\in \Lambda_K$ and let $f_F$ be as in Definition \ref{bur}. If  $\lambda(F)\geq p^n$ then $f_F(i)>v_i(F)-\frac{1}{e}$ for all $0\leq i\leq \lambda(F)-1$.
\end{lemma}
\begin{proof} We may assume $\mu=0$. Let $Q=Q(F)$, $R=R(F)$, $R_i=R(i-p^n)$, $Q_i=Q(i-p^n)$, and $v_i=v_i(F)$ (see Definition \ref{pldef}). The case where $Q=0$ follows directly from the definitions, so assume $Q>0$. There are three cases:
\begin{enumerate}
\item[(1)] $i\in [0,R+p^{n-1}],$
\item[(2)] $i\in [R+p^{n-1}+1, p^n-1],$ 
\item[(3)]  $i\in [p^n, \lambda-1]$.
\end{enumerate}
For convenience, define
$$
X(i)=\frac{(Q+\frac{1}{e} )(i-R-p^{n-1})}{Qt_n+1}.
$$

\begin{enumerate}

\item This is clear as $v_i-\frac{1}{e}=Q+1<Q+2=f_F(i)$.

\item   In this case, we have $v_i=Q+1$ and $f_F(i)=Q+2-X(i)$, so it suffices to show $X(i)<1+\frac{1}{e}$. 
Using the bounds on $i$ and the definition of $R$, we have
\begin{align*}
X(i)&<\frac{(Q+\frac{1}{e} )(i-R-p^{n-1})}{Qt_n}\\
&\leq \frac{(Q+\frac{1}{e} )((p^n-1)-0-p^{n-1})}{Qt_n}\\
&< \frac{(Q+\frac{1}{e})t_n}{Qt_n}\\
&\leq 1+\frac{1}{e}.
\end{align*}

\item  In this case, we have $v_i=Q-Q_i+\begin{cases}\frac{1}{e}& \text{if $R_i\leq R$}\\ 
  0& \text{if $R_i> R$}.
  \end{cases}$. 
  
\noindent If $i\leq \lambda-t_n$ then $f_F(i)=Q+2-X(i)$ and the statement follows from 
\begin{align*}
X(i)-Q_i
&<\frac{(Q+\frac{1}{e})(i-R-p^{n-1})-Q(i-p^n-R_i)}{Qt_n}\\
&=1+\frac{\frac{1}{e}(i-p^{n-1}-R)+Q(R_i-R)}{Qt_n}\\
&\leq 1+\frac{\frac{1}{e}(\lambda-t_n-p^{n-1}-R)-Q(R-R_i)}{Qt_n}\\
&=1+\frac{1}{e}-\frac{R-R_i}{t_n}\\
&\leq
\begin{cases}2& \text{if $R_i\leq R$}\\ 
  2+\frac{1}{e}& \text{if $R_i> R$}.
  \end{cases}
\end{align*}
\noindent If $i\geq \lambda-t_n+1$ then $f_F(i)=1$ and the statement follows from the fact that $ Q_F-Q_i\leq1$.
\end{enumerate}
\end{proof}

\begin{lemma}\label{prodlem} For integers $m\geq 1$ and $q\geq0$ we have 
$$
\prod_{i=1}^{m}(T^q+pY_i)=\sum_{i=0}^{m}p^{i}T^{(m-i)q}e_{i}(Y_1,\dots, Y_{m}),
$$
where $e_{i}(Y_1,\dots, Y_{m})$ denotes the $i$\textsuperscript{th} elementary symmetric polynomial in the $m$ variables $Y_1,\dots, Y_{m}$. 
\end{lemma}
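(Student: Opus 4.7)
The plan is to prove the identity by direct expansion via the distributive law, organizing the resulting $2^m$ terms according to the pattern in which each factor contributes either $T^q$ or $pY_i$. For a subset $S\subseteq\{1,\dots,m\}$, let $S$ record the indices of the factors from which we choose the $pY_i$ summand (and hence take $T^q$ from the factors indexed by the complement). Expanding gives
\[
\prod_{i=1}^{m}(T^q+pY_i)=\sum_{S\subseteq \{1,\dots,m\}}p^{|S|}T^{(m-|S|)q}\prod_{j\in S}Y_j.
\]
Grouping subsets by their cardinality $i=|S|$ and using the definition
\[
e_i(Y_1,\dots,Y_m)=\sum_{\substack{S\subseteq\{1,\dots,m\}\\ |S|=i}}\prod_{j\in S}Y_j
\]
immediately yields the claimed formula.

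Alternatively, one can give a one-line induction on $m$. For $m=1$ the identity $T^q+pY_1=T^q e_0+pY_1e_1$ is trivial. Assuming the result for $m-1$, multiply both sides by $(T^q+pY_m)$ and use the well-known recursion $e_i(Y_1,\dots,Y_m)=e_i(Y_1,\dots,Y_{m-1})+Y_m\,e_{i-1}(Y_1,\dots,Y_{m-1})$ to collect powers of $p$ and $T^q$.

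There is essentially no obstacle here: the identity is a purely formal combinatorial/algebraic fact about the expansion of a product of binomials, and no properties of $T$, $p$, or the $Y_i$ beyond commutativity are used. The only care required is bookkeeping to confirm that the coefficient of $p^i T^{(m-i)q}$ is exactly $e_i(Y_1,\dots,Y_m)$, which follows directly from the definition of the elementary symmetric polynomials.
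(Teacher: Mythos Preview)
Your proof is correct. The paper's own proof is precisely your inductive argument, invoking the recursion $e_i(Y_1,\dots,Y_{m+1})=e_i(Y_1,\dots,Y_m)+Y_{m+1}e_{i-1}(Y_1,\dots,Y_m)$; your direct expansion via subsets is an equally valid and slightly more explicit alternative, but not substantively different.
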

\begin{proof} This follows from induction and the recursion relation 
$$
e_{i}(Y_1,\dots, Y_{m+1})=e_{i}(Y_1,\dots, Y_{m})+Y_{m+1}e_{i-1}(Y_1,\dots, Y_{m}).
$$
\end{proof}

\begin{lemma}\label{boundNP} Using the notation of \S\ref{plargeiw}, let $G_{k,n} = p^\mu(p^{k-1}+pT^{(k-2)q_n}+pT^{\lambda_n-1}+T^{\lambda_n})$. Then 
$
 \NP_p\big(\T_{\lambda_n-1} (LF_{k,n})\big)\subseteq \NP_p\big(\T_{\lambda_n-1}(G_{k,n} )\big)
 $
 \end{lemma}
\begin{proof} We may assume $\mu=0$. Write $LF_{k,n}=\sum_{i=0}^{\lambda_n} a_iT^i$. It suffices to prove that 
\begin{enumerate}
\item $\ord_p a_i\geq k-1-\frac{i}{q_n}$ for all $0\leq i\leq (k-2)q_n$, and 
\item $\ord_p a_i\geq 1$ for all $(k-2)q_n\leq i\leq \lambda_n-1$. 
\end{enumerate}
 Using Lemma \ref{prodlem}, we can write
\begin{equation}\label{Fkndef}
F_{k,n}=\sum_{i=0}^{k-1}p^{k-1-i}T^{iq_n}e_{k-1-i}(W_0,\cdots W_{k-2}).
\end{equation}
 Letting $e_{i}=e_{i}(W_0,\cdots W_{k-2})$, it follows that 
\begin{align*}
LF_{k,n} &\equiv T^{q_n}\big(L\sum_{i=1}^{k-1}p^{k-1-i}T^{(i-1)q_n}e_{k-1-i}\big)\Mod p^{k-1}\\
&\equiv T^{2q_n}\big(L\sum_{i=2}^{k-1}p^{k-1-i}T^{(i-2)q_n}e_{k-1-i}\big)\Mod p^{k-2}\\
& \vdots \\
&\equiv T^{(k-2)q_n}\big(L\sum_{i=k-2}^{k-1}p^{k-1-i}T^{(i-k+2)q_n}e_{k-1-i}\big)\Mod p^{2}\\
&\equiv T^{(k-1)q_n+\lambda}\Mod p.
\end{align*}
Thus, the above congruences yield the inequalities:
\begin{equation}\label{aik}
\ord_pa_i\geq  \begin{cases}
k-1& \text{for $0\leq i\leq  q_n-1,$}\\ 
k-2& \text{for $q_n\leq i\leq 2q_n-1,$}\\ 
\vdots & \vdots\\
2& \text{for $(k-3)q_n\leq i\leq (k-2)q_n-1,$}\\ 
1 & \text{for $(k-2)q_n \leq i\leq \lambda_n-1$.}
\end{cases}
\end{equation}
The result follows. 
\end{proof}

\section{Tables of $\lambda$-invariants} \label{tablessection}

The following tables contain the invariants $\lambda(\theta_n(f))=\lambda(\theta_{n,j=0}^{\psi=1}(f))$ at primes $p\in\{2,3,5\}$ and the (conjectural) invariants $\lambda^\pm=\lambda(L_{p}^\pm(f))$ implied (using the same idea as in Example \ref{compiwex}) by Theorems \ref{theorem_smallweights} and \ref{main}. We include data on all newforms $f\in S_k(N,\e)$, $p\nmid N$, with $a_p(f)=0$, degree $d=[K_f:\Q]$, and weight $2\leq k\leq 10$ under the following constraints:

\begin{enumerate}[label=(\alph*)]

\item If $k$ is even, $d=1$, and $\e=1$, then we include all newforms of level $N\leq 300$.

\item If either (i) $k$ is even and $2\leq d \leq 6$ or (ii) $k$ is odd and $1\leq d\leq 6$, then we include all newforms of level $N\leq 40$. 

\end{enumerate}

\noindent Omitted from these tables are data on newforms  of weight $k=2$ with $d=1$, since these forms correspond to isogeny classes of rational elliptic curves whose Iwasawa invariants can be found on LMFDB, for example. 

The column titled `label' contains the Magma  label for the modular form, which can be recovered in Magma using the command \texttt{Newform("label")}. In all of our data, when $d>1$ the prime $p$ splits into at most two distinct prime factors in $K_f$ (both of which have ramification index $\leq 2$ when $p=2$, so our results apply). To distinguish between these primes, we simply write ``$d^*$'' to indicate that our computations are with respect to the second prime $\Pp$ lying over $p$, ordered according to the Magma function \texttt{Decomposition(O,p)}. The code used to carry out these computations is available upon request.

\newpage

\begin{table}
    \caption{$\lambda$-invariants at $p=2$.}
        \label{table_p2}
    \begin{minipage}[t]{.54\linewidth}
 \scalebox{.68}{
\begin{tabular}{|ccl||cc||c|c|c|c|c|c|c|c||c|}
\hline
$k$ 	& $N$ &$d$&$\lambda^+$& $\lambda^-$&  0&1 & 2 &3&4&5&6&7 &label\\ \hline 
2&21&2&0&3&0&0&1&5&5&13&21&45&\texttt{G1N21k2B}\\
2&33&2&0&1&0&1&1&3&5&11&21&43&\texttt{G1N33k2E}\\
2&39&4&0&1&0&1&1&3&5&11&21&43&\texttt{G1N39k2C}\\
2&39&2&0&1&0&1&1&3&5&11&21&43&\texttt{G1N39k2E}\\
\hline
3&11&1&1&1&0&1&3&5&11&21&43&85&\texttt{G1N11k3B}\\
3&19&1&3&1&0&1&2&5&13&21&45&85&\texttt{G1N19k3C}\\
3&21&2&3&1&0&1&2&5&13&21&45&85&\texttt{G1N21k3E}\\
3&27&1&5&1&0&1&2&5&15&21&47&85&\texttt{G1N27k3C}\\
3&35&1&5&1&0&1&2&5&15&21&47&85&\texttt{G1N35k3G}\\
3&35&1&5&1&0&1&2&5&15&21&47&85&\texttt{G1N35k3H}\\
3&39&2&1&1&0&1&3&5&11&21&43&85&\texttt{G1N39k3D}\\
3&39&2&1&1&0&1&3&5&11&21&43&85&\texttt{G1N39k3G}\\
\hline
4&9&1&1&2&0&0&2&4&8&16&32&64&\texttt{G0N9k4A}\\
4&21&2&2&3&0&1&3&5&9&17&33&65&\texttt{G1N21k4G}\\
4&33&2&1&2&0&1&2&4&8&16&32&64&\texttt{G1N33k4I}\\
4&39&1&1&2&0&0&2&4&8&16&32&64&\texttt{G0N39k4A}\\
4&39&2&1&2&0&1&2&4&8&16&32&64&\texttt{G1N39k4E}\\
4&39&4& 1&2&0&1&2&4&8&16&32&64&\texttt{G1N39k4H}\\
4&53&1&2&3&0&1&3&5&9&17&33&65&\texttt{G0N53k4A}\\
4&95&1&1&2&0&0&2&4&8&16&32&64&\texttt{G0N95k4A}\\
4&105&1&1&2&0&0&2&4&8&16&32&64&\texttt{G0N105k4A}\\
4&117&1&2&7&0&1&3&7&9&21&33&69&\texttt{G0N117k4A}\\
4&121&1&1&2&0&0&2&4&8&16&32&64&\texttt{G0N121k4A}\\
4&165&1&2&3&0&0&3&5&9&17&33&65&\texttt{G0N165k4A}\\
4&189&1&2&3&0&1&3&5&9&17&33&65&\texttt{G0N189k4A}\\
4&189&1&2&3&0&0&3&5&9&17&33&65&\texttt{G0N189k4B}\\
4&225&1&3&4&0&0&1&6&10&18&34&66&\texttt{G0N225k4A}\\
4&243&1&2&9&0&1&3&3&9&23&33&71&\texttt{G0N243k4A}\\
4&243&1&8&3&0&1&3&5&15&17&39&65&\texttt{G0N243k4B}\\
\hline
5&11&1&1&3&0&1&3&7&13&27&53&107&\texttt{G1N11k5B}\\
5&19&1&1&5&0&1&3&5&13&29&53&109&\texttt{G1N19k5C}\\
5&21&2&1&5&0&1&3&5&13&29&53&109&\texttt{G1N21k5E}\\
5&27&1&1&7&0&1&3&6&13&31&53&111&\texttt{G1N27k5C}\\
5&35&1&1&7&0&1&3&6&13&31&53&111&\texttt{G1N35k5F}\\
5&35&1&1&7&0&1&3&6&13&31&53&111&\texttt{G1N35k5G}\\
5&39&2&1&3&0&1&3&7&13&27&53&107&\texttt{G1N39k5D}\\
5&39&2&1&3&0&1&3&7&13&27&53&107&\texttt{G1N39k5G}\\
\hline
6&21&2&4&3&0&1&3&5&13&21&45&85&\texttt{G1N21k6E}\\
6&27&1&6&3&0&1&3&5&15&21&47&85&\texttt{G0N27k6A}\\
6&33&2&2&3&0&1&3&5&11&21&43&85&\texttt{G1N33k6I}\\
6&39&4&2&3&0&1&3&5&11&21&43&85&\texttt{G1N39k6E}\\
6&121&1&3&4&0&1&2&6&12&22&44&86&\texttt{G0N121k6A}\\
6&225&1&3&4&0&1&2&6&12&22&44&86&\texttt{G0N225k6A}\\
6&225&1&3&4&0&1&2&6&12&22&44&86&\texttt{G0N225k6B}\\
6&243&1&4&3&0&1&3&5&13&21&45&85&\texttt{G0N243k6A}\\
6&243&1&2&11&0&1&3&7&11&29&43&93&\texttt{G0N243k6B}\\
\hline
7&3&1&1&3&0&1&3&7&15&31&63&127&\texttt{G1N3k7A}\\
7&11&1&1&3&0&1&3&7&15&31&63&127&\texttt{G1N11k7B}\\
7&19&1&1&3&0&1&3&7&15&31&63&127&\texttt{G1N19k7C}\\
7&35&1&3&5&0&1&3&5&9&17&33&65&\texttt{G1N35k7F}\\
7&35&1&3&5&0&1&3&5&9&17&33&65&\texttt{G1N35k7G}\\
7&39&2&2&4&0&1&2&4&8&16&32&64&\texttt{G1N39k7G}\\
\hline
\end{tabular}
}
\end{minipage}%
\begin{minipage}[t]{.8\linewidth}
\vspace{-77mm}
\hspace{1mm}
\scalebox{.7}{
\begin{tabular}{|ccl||cc||c|c|c|c|c|c|c|c||c|}
\hline
$k$ 	& $N$ &$d$&$\lambda^+$& $\lambda^-$& 0& 1 & 2 &3&4&5&6&7& label\\ \hline 
8&21&2&2&7&0&1&3&5&13&29&53&109&\texttt{G1N21k8E}\\
8&27&1&2&9&0&1&3&7&13&31&53&111&\texttt{G0N27k8A}\\
8&33&2&2&5&0&1&3&7&13&27&53&107&\texttt{G1N33k8I}\\
8&39&4&2&5&0&1&3&7&13&27&53&107&\texttt{G1N39k8E}\\
8&27&1&2&9&0&1&3&7&13&31&53&111&\texttt{G0N27k8A}\\
8&121&1&3&6&0&1&2&4&14&28&54&108&\texttt{G0N121k8A}\\
8&225&1&3&6&0&1&2&4&14&28&54&108&\texttt{G0N225k8A}\\
8&225&1&3&6&0&1&2&4&14&28&54&108&\texttt{G0N225k8B}\\
8&243&1&10&5&0&1&3&7&13&27&61&107&\texttt{G0N243k8A}\\
8&243&1&2&7&0&1&3&5&13&29&53&109&\texttt{G0N243k8B}\\
\hline
9&11&1&3&5&0&1&3&5&11&21&43&85&\texttt{G1N11k9B}\\
9&19&1&5&5&0&1&2&5&13&21&45&85&\texttt{G1N19k9C}\\
9&21&2&5&5&0&1&2&5&13&21&45&85&\texttt{G1N21k9D}\\
9&27&1&7&5&0&1&2&5&15&21&47&85&\texttt{G1N27k9C}\\
9&35&1&7&5&0&1&2&5&15&21&47&85&\texttt{G1N35k9F}\\
9&35&1&7&5&0&1&2&5&15&21&47&85&\texttt{G1N35k9G}\\
9&39&2&3&5&0&1&3&5&11&21&43&85&\texttt{G1N39k9D}\\
9&39&2&3&5&0&1&3&5&11&21&43&85&\texttt{G1N39k9G}\\
\hline
10&9&1&3&6&0&1&2&4&8&16&32&64&\texttt{G0N9k10A}\\
10&21&2&4&7&0&1&3&5&9&17&33&65&\texttt{G1N21k10H}\\
10&33&2&4&6&0&1&2&4&8&16&32&64&\texttt{G1N33k10H}\\
10&39&4&4&6&0&1&2&4&8&16&32&64&\texttt{G1N39k10H}\\
10&121&1&3&6&0&1&2&4&8&16&32&64&\texttt{G0N121k10A}\\
10&225&1&5&8&0&1&3&6&10&18&34&66&\texttt{G0N225k10A}\\
10&243&1&10&7&0&1&3&5&15&17&39&65&\texttt{G0N243k10A}\\
10&243&1&4&13&0&1&3&7&9&23&33&71&\texttt{G0N243k10B}\\
\hline
\end{tabular}
}
\end{minipage} 
\end{table}
\clearpage

\newpage
\begin{table}
    \caption{$\lambda$-invariants at $p=3$.}
    \label{table_p3}
    \begin{minipage}[t]{.54\linewidth}
 \scalebox{.7}{
\begin{tabular}{|ccl||cc||c|c|c|c|c|c|c||c|}
\hline
$k$ 	& $N$ &$d$&$\lambda^+$& $\lambda^-$& 0& 1 & 2 &3&4&5&6& label\\ \hline 
2&20&2&0&0&0&0&2&6&20&60&182&\texttt{G1N20k2B}\\
2&26&2&0&0&0&0&2&6&20&60&182&\texttt{G1N26k2C}\\
2&28&2&0&0&0&0&2&6&20&60&182&\texttt{G1N28k2C}\\
2&34&2&0&0&0&0&2&6&20&60&182&\texttt{G1N34k2C}\\
2&37&2&1&5&0&1&7&7&25&61&187&\texttt{G1N37k2G}\\ 
\hline
3&7&1&1&0&0&1&4&13&40&121&364&\texttt{G1N7k3A}\\
3&16&1&1&0&0&1&4&13&40&121&364&\texttt{G1N16k3A}\\
3&19&1&1&0&0&1&4&13&40&121&364&\texttt{G1N19k3C}\\
3&20&2&1&0&0&1&4&13&40&121&364&\texttt{G1N20k3E}\\
3&26&2&1&0&0&1&4&13&40&121&364&\texttt{G1N26k3C}\\
3&31&3&1&0&0&1&4&13&40&121&364&\texttt{G1N31k3F}\\
3&40&1&1&0&0&1&4&13&40&121&364&\texttt{G1N40k3E}\\
3&40&1&1&0&0&1&4&13&40&121&364&\texttt{G1N40k3F}\\
\hline
4&20&2&2&0&0&2&6&20&60&182&546&\texttt{G1N20k4B}\\
4&28&2&2&0&0&2&6&20&60&182&546&\texttt{G1N28k4E}\\
4&32&1&2&0&0&2&6&20&60&182&546&\texttt{G0N32k4A}\\
4&49&1&2&0&0&2&6&20&60&182&546&\texttt{G0N49k4D}\\
4&64&1&2&0&0&2&6&20&60&182&546&\texttt{G0N64k4A}\\
4&154&1&4&2&0&2&8&22&62&184&548&\texttt{G0N154k4D}\\ 
4&254&1&5&1&0&1&7&23&61&185&547&\texttt{G0N254k4A}\\
4&256&1&2&0&0&2&6&20&60&182&546&\texttt{G0N256k4A}\\
4&256&1&9&1&0&1&7&9&61&189&547&\texttt{G0N256k4B}\\
4&290&1&3&5&0&1&5&21&65&183&551&\texttt{G0N290k4A}\\
\hline
5&4&1&2&0&0&2&8&26&80&242&728&\texttt{G1N4k5A}\\
5&7&1&2&0&0&2&8&26&80&242&728&\texttt{G1N7k5B}\\
5&19&1&2&0&0&2&8&26&80&242&728&\texttt{G1N19k5C}\\
5&31&3&2&0&0&2&8&26&80&242&728&\texttt{G1N31k5D}\\
5&40&1&4&2&0&1&4&10&28&82&244&\texttt{G1N40k5F}\\
5&40&1&2&0&0&2&8&26&80&242&728&\texttt{G1N40k5G}\\
\hline
6&20&2&6&1&0&1&5&18&47&144&425&\texttt{G1N20k6B}\\
6&26&1&5&1&0&1&5&17&47&143&425&\texttt{G0N26k6A}\\
6&28&2&4&1&0&2&5&16&47&142&425&\texttt{G1N28k6F}\\ 
6&32&1&7&1&0&1&5&19&47&145&425&\texttt{G0N32k6A}\\
6&49&1&4&2&0&2&6&16&48&142&426&\texttt{G0N49k6B}\\
6&64&1&4&4&0&2&8&16&50&142&428&\texttt{G0N64k6A}\\
6&208&1&4&8&0&2&6&16&54&142&432&\texttt{G0N208k6A}\\
6&256&1&9&1&0&1&5&21&47&147&425&\texttt{G0N256k6A}\\
6&256&1&4&2&0&2&6&16&48&142&426&\texttt{G0N256k6B}\\ 
\hline
7&7&1&4&1&0&2&7&22&67&202&607&\texttt{G1N7k7C}\\
7&16&1&4&1&0&2&7&22&67&202&607&\texttt{G1N16k7A}\\
7&19&1&4&1&0&2&7&22&67&202&607&\texttt{G1N19k7C}\\
7&20&2&4&1&0&2&7&22&67&202&607&\texttt{G1N20k7E}\\ 
7&31&1&4&1&0&2&7&22&67&202&607&\texttt{G1N31k7D}\\
7&31&2&4&1&0&2&7&22&67&202&607&\texttt{G1N31k7E}\\
7&40&1&4&1&0&2&7&22&67&202&607&\texttt{G1N40k7E}\\
7&40&1&4&1&0&2&7&22&67&202&607&\texttt{G1N40k7F}\\
\hline
\end{tabular}
}
\end{minipage}%
\begin{minipage}[t]{.5\linewidth}
    \vspace{-200pt}
\scalebox{.7}{
\begin{tabular}{|ccl||cc||c|c|c|c|c|c|c||c|}
\hline
$k$ 	& $N$ &$d$&$\lambda^+$& $\lambda^-$& 0& 1 & 2 &3&4&5&6& label\\ \hline 
8&20&2&5&4&0&1&6&11&36&101&306&\texttt{G1N20k8C}\\
8&28&2&5&2&0&1&4&11&34&101&304&\texttt{G1N28k8E}\\
8&32&1&5&5&0&1&7&11&37&101&307&\texttt{G0N32k8A}\\
8&49&1&6&2&0&2&4&12&34&102&304&\texttt{G0N49k8B}\\
8&64&1&8&2&0&2&4&14&34&104&304&\texttt{G0N64k8A}\\ 
8&256&1&5&7&0&1&3&11&39&101&309&\texttt{G0N256k8A}\\
8&256&1&6&2&0&2&4&12&34&102&304&\texttt{G0N256k8B}\\
\hline
9&4&1&5&1&0&1&5&17&53&161&485&\texttt{G1N4k9A}\\
9&7&1&5&1&0&1&5&17&53&161&485&\texttt{G1N7k9B}\\
9&19&1&5&1&0&1&5&17&53&161&485&\texttt{G1N19k9C}\\
9&31&3&6&2&0&2&6&18&54&162&486&\texttt{G1N31k9D}\\
9&40&1&5&1&0&1&5&17&53&161&485&\texttt{G1N40k9E}\\
9&40&1&7&3&0&1&7&19&55&163&487&\texttt{G1N40k9F}\\
\hline
10&20&2&6&2&0&2&8&24&74&222&668&\texttt{G1N20k10C}\\
10&28&2&6&2&0&2&8&24&74&222&668&\texttt{G1N28k10E}\\
10&32&1&6&2&0&2&8&24&74&222&668&\texttt{G0N32k10A}\\
10&49&1&6&2&0&2&8&24&74&222&668&\texttt{G0N49k10A}\\
10&64&1&6&2&0&2&8&24&74&222&668&\texttt{G0N64k10A}\\
10&256&1&7&9&0&1&3&25&27&223&675&\texttt{G0N256k10A}\\
10&256&1&6&2&0&2&8&24&74&222&668&\texttt{G0N256k10B}\\
\hline
\end{tabular}
}
\end{minipage} 
\end{table}
\clearpage

\newpage
\begin{table}[!htb]
    \caption{$\lambda$-invariants at $p=5$.}
        \label{table_p5}
    \begin{minipage}[t]{.54\linewidth}
 \scalebox{.7}{
\begin{tabular}{|ccl||cc||c|c|c|c|c|c||c|}
\hline
$k$ 	& $N$ &$d$&$\lambda^+$& $\lambda^-$& 0&1 & 2 &3&4&5& label\\ \hline
2&18&2&0&0&0&0&0&4&20&104&\texttt{G1N18k2A}\\
2&21&2&0&0&0&0&0&4&20&104&\texttt{G1N21k2B}\\
2&24&2&0&0&0&0&0&4&20&104&\texttt{G1N24k2C}\\
2&28&2&0&1&$\infty$&0&0&5&20&105&\texttt{G1N28k2C}\\
2&38&2&0&0&0&0&0&4&20&104&\texttt{G1N38k2D}\\
2&39&4&0&0&0&0&0&4&20&104&\texttt{G1N39k2C}\\
2&39&4*&0&0&0&0&0&4&20&104&\texttt{G1N39k2C}\\
2&39&2&0&1&$\infty$&0&0&5&20&105&\texttt{G1N39k2I}\\
\hline
3&7&1&1&0&0&1&8&41&208&1041&\texttt{G1N7k3A}\\
3&8&1&1&0&0&1&8&41&208&1041&\texttt{G1N8k3A}\\
3&12&1&1&0&0&1&8&41&208&1041&\texttt{G1N12k3B}\\
3&21&2&1&0&0&1&8&41&208&1041&\texttt{G1N21k3E}\\
3&23&3&1&0&0&1&8&41&208&1041&\texttt{G1N23k3B}\\
3&27&1&1&0&0&1&8&41&208&1041&\texttt{G1N27k3C}\\
3&32&1&1&0&0&1&8&41&208&1041&\texttt{G1N32k3C}\\
3&39&2&1&0&0&1&8&41&208&1041&\texttt{G1N39k3D}\\
3&39&2&1&0&0&1&8&41&208&1041&\texttt{G1N39k3G}\\ 
\hline
4&9&1&2&0&0&2&12&62&312&1562&\texttt{G0N9k4A}\\
4&21&2&2&0&0&2&12&62&312&1562&\texttt{G1N21k4G}\\
4&24&2&2&0&0&2&12&62&312&1562&\texttt{G1N24k4C}\\
4&28&2&2&0&0&2&12&62&312&1562&\texttt{G1N28k4E}\\
4&39&4&2&0&0&2&12&62&312&1562&\texttt{G1N39k4H}\\
4&39&4*&2&0&0&2&12&62&312&1562&\texttt{G1N39k4H}\\
4&49&1&2&0&0&2&12&62&312&1562&\texttt{G0N49k4D}\\
4&66&1&2&0&0&2&12&62&312&1562&\texttt{G0N66k4B}\\
4&108&1&2&0&0&2&12&62&312&1562&\texttt{G0N108k4A}\\
4&108&1&2&0&0&2&12&62&312&1562&\texttt{G0N108k4B}\\
4&132&1&2&0&0&2&12&62&312&1562&\texttt{G0N132k4B}\\
4&144&1&2&0&0&2&12&62&312&1562&\texttt{G0N144k4A}\\
4&174&1&2&0&0&2&12&62&312&1562&\texttt{G0N174k4C}\\
4&182&1&2&0&0&2&12&62&312&1562&\texttt{G0N182k4D}\\
4&198&1&2&0&0&2&12&62&312&1562&\texttt{G0N198k4A}\\
4&222&1&2&0&0&2&12&62&312&1562&\texttt{G0N222k4B}\\
4&224&1&2&0&0&2&12&62&312&1562&\texttt{G0N224k4A}\\
4&224&1&2&0&0&2&12&62&312&1562&\texttt{G0N224k4B}\\
4&243&1&2&0&0&2&12&62&312&1562&\texttt{G0N243k4A}\\
4&243&1&2&0&0&2&12&62&312&1562&\texttt{G0N243k4B}\\
4&256&1&2&0&0&2&12&62&312&1562&\texttt{G0N256k4G}\\
4&256&1&2&0&0&2&12&62&312&1562&\texttt{G0N256k4H}\\
4&273&1&2&0&0&2&12&62&312&1562&\texttt{G0N273k4C}\\
\hline
5&7&1&3&0&0&3&16&83&416&2083&\texttt{G1N7k5B}\\
5&8&1&3&0&0&3&16&83&416&2083&\texttt{G1N8k5A}\\
5&12&1&3&0&0&3&16&83&416&2083&\texttt{G1N12k5B}\\
5&21&2&3&0&0&3&16&83&416&2083&\texttt{G1N21k5E}\\
5&23&3&3&0&0&3&16&83&416&2083&\texttt{G1N23k5B}\\
5&27&1&3&0&0&3&16&83&416&2083&\texttt{G1N27k5C}\\
5&32&1&3&0&0&3&16&83&416&2083&\texttt{G1N32k5D}\\
5&39&2&3&0&0&3&16&83&416&2083&\texttt{G1N39k5D}\\
5&39&2&3&0&0&3&16&83&416&2083&\texttt{G1N39k5G}\\ 
\hline
6&21&2&4&0&0&4&20&104&520&2604&\texttt{G1N21k6E}\\
6&24&2&4&0&0&4&20&104&520&2604&\texttt{G1N24k6E}\\
6&27&1&4&0&0&4&20&104&520&2604&\texttt{G0N27k6A}\\
6&28&2&5&0&0&1&20&105&520&2605&\texttt{G1N28k6F}\\
6&36&1&4&0&0&4&20&104&520&2604&\texttt{G0N36k6A}\\
6&39&4&4&0&0&4&20&104&520&2604&\texttt{G1N39k6E}\\
6&39&4*&4&0&0&4&20&104&520&2604&\texttt{G1N39k6E}\\
\hline
\end{tabular}
}
    \end{minipage}%
    \begin{minipage}{.5\linewidth}
    \hspace{1mm}
    \vspace{5.4mm}
\scalebox{.7}{
\begin{tabular}{|ccl||cc||c|c|c|c|c|c||c|}
\hline
$k$ 	& $N$ &$d$&$\lambda^+$& $\lambda^-$& 0&1 & 2 &3&4&5& label\\ \hline
6&49&1&4&0&0&4&20&104&520&2604&\texttt{G0N49k6B}\\
6&108&1&4&0&0&4&20&104&520&2604&\texttt{G0N108k6A}\\
6&144&1&4&0&0&4&20&104&520&2604&\texttt{G0N144k6A}\\
6&243&1&4&0&0&4&20&104&520&2604&\texttt{G0N243k6A}\\
6&243&1&5&1&0&1&21&105&521&2605&\texttt{G0N243k6B}\\
6&256&1&4&0&0&4&20&104&520&2604&\texttt{G0N256k6C}\\
6&256&1&5&1&0&1&21&105&521&2605&\texttt{G0N256k6D}\\
\hline
7&3&1&4&0&0&4&24&124&624&3124&\texttt{G1N3k7A}\\
7&7&1&4&0&0&4&24&124&624&3124&\texttt{G1N7k7C}\\
7&8&1&4&0&0&4&24&124&624&3124&\texttt{G1N8k7A}\\
7&23&1&5&1&0&1&5&25&125&625&\texttt{G1N23k7B}\\
7&23&2&4&0&0&4&24&124&624&3124&\texttt{G1N23k7C}\\
7&23&2*&5&1&0&1&5&25&125&625&\texttt{G1N23k7C}\\
7&32&1&6&2&0&2&6&26&126&626&\texttt{G1N32k7D}\\
7&39&2&5&1&0&1&5&25&125&625&\texttt{G1N39k7G}\\
\hline
8&21&2&6&1&0&2&9&46&229&1146&\texttt{G1N21k8E}\\
8&24&2&6&1&0&2&9&46&229&1146&\texttt{G1N24k8E}\\
8&27&1&6&1&0&2&9&46&229&1146&\texttt{G0N27k8A}\\
8&28&2&6&1&0&2&9&46&229&1146&\texttt{G1N28k8E}\\
8&36&1&6&1&0&2&9&46&229&1146&\texttt{G0N36k8A}\\
8&39&4&6&1&0&2&9&46&229&1146&\texttt{G1N39k8E}\\
8&39&4*&6&2&0&2&10&46&230&1146&\texttt{G1N39k8E}\\
8&49&1&10&1&0&1&9&50&229&1150&\texttt{G0N49k8B}\\
8&66&1&6&1&0&2&9&46&229&1146&\texttt{G0N66k8A}\\
8&108&1&6&1&0&2&9&46&229&1146&\texttt{G0N108k8A}\\
8&144&1&6&1&0&2&9&46&229&1146&\texttt{G0N144k8A}\\
8&198&1&6&1&0&2&9&46&229&1146&\texttt{G0N198k8D}\\
8&243&1&10&1&0&1&9&50&229&1150&\texttt{G0N243k8A}\\
8&243&1&6&1&0&2&9&46&229&1146&\texttt{G0N243k8B}\\
8&256&1&6&1&0&2&9&46&229&1146&\texttt{G0N256k8C}\\
8&256&1&6&1&0&2&9&46&229&1146&\texttt{G0N256k8D}\\
\hline
9&7&1&7&1&0&3&13&67&333&1667&\texttt{G1N7k9B}\\
9&8&1&7&1&0&3&13&67&333&1667&\texttt{G1N8k9A}\\
9&12&1&8&1&0&4&13&68&333&1668&\texttt{G1N12k9B}\\
9&21&2&7&1&0&3&13&67&333&1667&\texttt{G1N21k9D}\\
9&23&3&7&1&0&3&13&67&333&1667&\texttt{G1N23k9B}\\
9&27&1&7&1&0&3&13&67&333&1667&\texttt{G1N27k9C}\\
9&32&1&7&1&0&3&13&67&333&1667&\texttt{G1N32k9D}\\
9&39&2&7&1&0&3&13&67&333&1667&\texttt{G1N39k9D}\\
9&39&2&7&1&0&3&13&67&333&1667&\texttt{G1N39k9G}\\
\hline
10&9&1&9&1&0&1&17&89&437&2189&\texttt{G0N9k10A}\\
10&21&2&8&1&0&4&17&88&437&2188&\texttt{G1N21k10H}\\
10&24&2&8&1&0&4&17&88&437&2188&\texttt{G1N24k10F}\\
10&28&2&8&1&0&4&17&88&437&2188&\texttt{G1N28k10E}\\
10&39&4&8&1&0&4&17&88&437&2188&\texttt{G1N39k10H}\\
10&39&4*&8&1&0&4&17&88&437&2188&\texttt{G1N39k10H}\\
10&49&1&8&2&0&4&18&88&438&2188&\texttt{G0N49k10A}\\
10&108&1&11&1&0&3&17&91&437&2191&\texttt{G0N108k10A}\\
10&108&1&8&2&0&4&18&88&438&2188&\texttt{G0N108k10B}\\
10&144&1&8&2&0&4&18&88&438&2188&\texttt{G0N144k10A}\\
10&243&1&8&4&0&4&20&88&440&2188&\texttt{G0N243k10A}\\
10&243&1&9&1&0&1&17&89&437&2189&\texttt{G0N243k10B}\\
10&256&1&11&1&0&3&17&91&437&2191&\texttt{G0N256k10C}\\
10&256&1&8&4&0&4&20&88&440&2188&\texttt{G0N256k10D}\\
\hline
\end{tabular}
}
\end{minipage} 
\end{table}
\clearpage
\newpage

\clearpage

\bibliographystyle{alpha}
\bibliography{references}

\end{document}